\newtheorem{thm}{Theorem}[section]
\newtheorem{prop}[thm]{Proposition}
\newtheorem{lem}[thm]{Lemma}
\newtheorem{cor}[thm]{Corollary}
\newtheorem{con}[thm]{Conjecture}
\theoremstyle{definition}
\newtheorem{definition}[thm]{Definition}
\newtheorem{example}[thm]{Example}
\theoremstyle{remark}
\newtheorem{remark}[thm]{Remark}
\numberwithin{equation}{section}
\newcommand{\Z}{\mathbb{Z}}  %the integers
\newcommand{\Q}{\mathbb{Q}}  % the rationals
\newcommand{\C}{\mathbb{C}}  % The complex numbers.
\newcommand{\A}{\mathcal{A}}
\newcommand{\CP}{\mathbb{CP}^{n+1}}  % The n+1 dimensional complex projective space.
\newcommand{\TA}{H^{\varepsilon,\rho}} % twisted homology
\newcommand{\TC}{C^{\varepsilon,\rho}} % twisted complex
\newcommand{\Ft}{\mathbb{F}[t^{\pm 1}]} % Laurent polynomial ring with F-coeff
\newcommand{\Qt}{\mathbb{Q}[t^{\pm 1}]} % Laurent polynomial ring with Q-coeff
\newcommand{\Ct}{\mathbb{C}[t^{\pm 1}]} % Laurent polynomial ring with C-coeff
\begin{document}

\title{Twisted Alexander polynomials of hypersurface complements}

\author{KaiHo Tommy Wong}
\address{Department of Mathematics, University of Wisconsin at Madison, 
Madison, WI 53706}
\email{wong@math.wisc.edu}
\urladdr{www.math.wisc.edu/$\sim$wong}

\begin{abstract}
We prove that the acyclity assumption required in Cogolludo and Florens work on twisted Alexander polynomials of plane curve complements, are generically satisfied. We then define twisted Alexander polynomials of a complex hypersurface with arbitrary singularities. These generalize the classical Alexander polynomials of high dimensional hypersurfaces and the twisted Alexander polynomial of plane curves. We recover the classical torsionness and divisibility results, which say that, under certain assumptions, the twisted Alexander modules of a complex hypersurface are torsion modules, and that their orders divide the product of certain `local polynomials' defined in terms of the topology at singularities.
\end{abstract}

\maketitle
%%%%%%%%%%%%%%%%%%%%%%%%%%%%%%%%%%%%%%%%%%%%%%%%%%%%%%%%%%%%%%%%%%%%%%
\section{Introduction}
The Alexander invariant was first defined by considering the first homology of the infinite cyclic cover of the knot or link exterior as a module over the Laurent polynomial ring. It is a strong and useful knot invariant both in practice and in theory (\cite{RB}). 

Since any germ of a complex plane curve is given by a link pair, Libgober adopted the Alexander invariant to the study of affine plane curve complements (\cite{AC}). He proved that the Alexander polynomial of a plane curve complement divides the product of the Alexander polynomials of the link exteriors associated to the singular points. Moreover, if the curve is transversal at infinity, then its Alexander polynomial also divides the Alexander polynomial of the link pair at infinity given by the formula $$(t-1)(t^d-1)^{d-2},$$ where $d$ is the degree of the curve.

This was generalized to the case of complex hypersurface complements with only isolated singularities by Libgober himself (\cite{HG}), and with non-isolated singularities by Maxim (\cite{MT}), and by Dimca and Libgober (\cite{RF}). Recently in 2014, Liu also recovered similar divisibility results by using nearby cycles (\cite{LT}).

Cogolludo and Florens consider Alexander polynomials twisted by linear representations (\cite{TC}), which are useful in the study of 3-manifolds. They improve Libgober's divisibility result by providing an equation, which will be referred to as the Cogolludo-Florens equation in this paper, and give some applications of twisted Alexander polynomials. Their work assumes acyclicity of certain twisted chain complexes.

In this paper, we will further investigate the twisted Alexander modules, and in particular, the terms in the equation given in Cogolludo and Florens's paper. Provided that a plane curve is transversal at infinity, the twisted Alexander modules for the link pair at infinity, and certain local twisted Alexander modules at singular points, are indeed torison. As a result, in many cases, the first twisted Alexander module is torsion, giving us a well-defined polynomial invariant. Also, in many interesting cases, the acyclicity assumption in Cogolludo and Florens's paper can be discarded.

We also study twisted Alexander invariants for complex hypersurface complements with possibly non-isolated singularities, focusing on the ones associated to the total linking number homomorphism. We will show that these modules are again torsion over the Laurent polynomial ring in some range, depending on the dimension of the ambient spaces. Divisibility results similar to those in the classical setting will be proved using some classical topology of stratified spaces.

Finally, it is known that roots of classical Alexander polynomials are the roots of unity. We prove a similar fact for twisted Alexander polynomials.
\vspace{0.5cm}

\textbf{Acknowledgment} I would like to thank my advisor, Laurentiu Maxim, for his motivation and support in this project. I am also very grateful to Anatoly Libgober, Alexandru Dimca, Yongqiang Liu, and Yun Su for useful discussions.

\section{Twisted chain complexes and Twisted Alexander Modules}

\subsection{Definitions}

We recall the definitions of twisted chain complexes, twisted Alexander modules, and twisted Alexander polynomials of path connected finite CW-complexes (\cite{TC},\cite{KL}).

Let $X$ be a path-connected finite CW-complex with $\pi=\pi_1(X)$. Assume there is a group homomorphism $$\varepsilon: \pi_1(X) \rightarrow \Z.$$ Let $\mathbb{F}$ be a field and consider a finite dimensional $\mathbb{F}$-vector space $\mathbb{V}$ and a linear representation $$\rho: \pi \rightarrow GL(\mathbb{V}).$$ Note that $\varepsilon$ extends to an algebra homomorphism $$\varepsilon: \mathbb{F}[\pi] \rightarrow \mathbb{F}[\Z] \cong \Ft.$$

Let $\tilde{X}$ be the universal cover of $X$. The cellular chain complex $C_*(\tilde{X};\mathbb{F})$ is a complex of left $\mathbb{F}[\pi]$-modules, generated by the lifts of the cells of $X$. Take $\mathbb{V}$ as a 2-sided $\mathbb{F}[\pi]$-module, with action for $v\in \mathbb{V}$ and $\alpha\in \pi$, $$v\cdot \alpha = (\rho(\alpha))(v)$$ and $$\alpha\cdot v= (\rho(\alpha))^{-1}(v).$$

Also consider the right $\mathbb{F}[\pi]$-module $\Ft \otimes \mathbb{V}$, where the action is induced by $\varepsilon\otimes \rho$:
$$(p\otimes v)\cdot \alpha = pt^{\varepsilon(\alpha)} \otimes v\cdot\alpha= pt^{\varepsilon(\alpha)} \otimes \rho(\alpha)v, \alpha \in \pi.$$

Let the chain complex of $(X,\varepsilon,\rho)$ be defined as the complex of $\Ft$-modules:

$$\TC_*(X,\Ft) = (\Ft \otimes \mathbb{V}) \otimes_{\mathbb{F}[\pi]}C_*(\tilde{X};\mathbb{F}),$$ where the action is given by $$t^n ((p\otimes v)\cdot c) = (t^n \cdot p \otimes v)\cdot c.$$

It is complex of free modules. A basis is given by elements of the form $1\otimes e_i \otimes c_k$, where $\{e_i\}$ is a basis of $\mathbb{V}$ and $\{c_k\}$ is a basis of the $\mathbb{F}[\pi]$ module $C_*(\tilde{X};\mathbb{F})$, obtained by lifting cells of $X$.

\begin{definition}
The twisted Alexander module $\TA_*(X,\Ft)$ of the triple $(X, \varepsilon, \rho)$ over the group ring $\Ft$ is defined to be the homology of the twisted chain complex $H_*(\TC_*(X,\Ft))$ with the induced $\Ft$-action.
\end{definition}

These modules are homotopy invariants. Theorem 2.1 from \cite{KL} gives the following equivalent definition of the twisted complex of $(X,\varepsilon,\rho)$. Suppose $X_{\infty}$ is the infinite cyclic cover of $X$ associated to $\pi'$, where $\pi'$ is the kernel of $\varepsilon$. Then the chain complex $$C_*(X_{\infty};\mathbb{V}_{\rho}):=\mathbb{V}\otimes_{\mathbb{F}[\pi']} C_*(\tilde{X}),$$ considered as a $\Ft$-module, is isomorphic to $\TC_*(X,\Ft)$. The action in the module $C_*(X_{\infty};\mathbb{V}_{\rho})$ is given by $t^n \cdot (v\otimes c) = v\cdot\gamma^{-n} \otimes \gamma^n c$, where $\gamma$ is an element in $\pi$ such that $\varepsilon(\gamma)=1$.

Note that if $\varepsilon$ is surjective, then $X_{\infty}$ is connected. Otherwise, there is a bijection between the set of path connected component of $X_{\infty}$ and cardinality of the cokernel of $\varepsilon$.

Denote by $\mathbb{F}(t)$ the field of fractions of $\Ft$, and define $$\TC_*(X,\mathbb{F}(t)) = \TC_*(X;\Ft) \otimes \mathbb{F}(t).$$ $(X,\varepsilon,\rho)$ is called acyclic if the chain complex $\TC_*(X,\mathbb{F}(t))$ is acyclic over $\mathbb{F}(t)$. Since $\Ft$ is a principal ideal domain, $\mathbb{F}(t)$ is flat over $\Ft$. So, $(X,\varepsilon,\rho)$ is acyclic if and only if $\TA_*(X,\Ft)$ are torsion over $\Ft$. Because $\Ft$ is a principal ideal domain, $\TA_*(X,\Ft)$ has a decomposition of free part and torsion part.

\begin{definition}
The order of the torsion part of $\TA_i(X,\Ft)$ is called the $i$-th twisted Alexander polynomial of $(X,\varepsilon,\rho)$, and is denoted by $\Delta_{i,X}^{\varepsilon,\rho}(t)$. 
\end{definition} 

These twisted Alexander polynomials are defined up to units in $\Ft$.

The following proposition says that in the `usual' situation, for instance, if the infinite cyclic cover $X_{\infty}$ is connected, then the 0-th twisted Alexander module is torsion.

\begin{prop}\cite{KL} If $\varepsilon$ is non-trivial, then $\TA_0(X,\Ft)$ is torsion over $\Ft$.
\end{prop}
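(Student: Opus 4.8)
The plan is to reduce the computation of $\TA_0(X,\Ft)$ to the explicit degree-one boundary map and to locate, inside its image, an endomorphism that becomes invertible after inverting $t$. Since twisted Alexander modules are homotopy invariants and every connected finite CW-complex is homotopy equivalent to one with a single $0$-cell, I may assume $X$ has exactly one $0$-cell, whose lift $\tilde v$ freely generates $C_0(\tilde X;\mathbb F)$ over $\mathbb F[\pi]$. Then $\TC_0(X,\Ft)$ is the free $\Ft$-module $\mathbb V\otimes_{\mathbb F}\Ft$, and for each $1$-cell $e_j$, representing the loop $g_j\in\pi$, the identity $\partial\tilde e_j=(g_j-1)\tilde v$ together with the module conventions yields
$$\partial_1\bigl((p\otimes v)\otimes\tilde e_j\bigr)=\bigl(t^{\varepsilon(g_j)}\rho(g_j)-\mathrm{id}\bigr)(pv)\in\mathbb V\otimes_{\mathbb F}\Ft,$$
where $t^{\varepsilon(g_j)}\rho(g_j)$ acts $\Ft$-linearly on $\mathbb V\otimes_{\mathbb F}\Ft$. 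Hence $\operatorname{im}\partial_1\supseteq\bigl(t^{\varepsilon(g_j)}\rho(g_j)-\mathrm{id}\bigr)\bigl(\mathbb V\otimes_{\mathbb F}\Ft\bigr)$ for every $j$.

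Because the $g_j$ generate $\pi$ while $\varepsilon$ is non-trivial, some $g_{j_0}$ satisfies $n:=\varepsilon(g_{j_0})\neq0$; set $A:=\rho(g_{j_0})\in GL(\mathbb V)$. The crux is that $t^nA-\mathrm{id}$ is an automorphism of the $\mathbb F(t)$-vector space $\mathbb V\otimes_{\mathbb F}\mathbb F(t)$. For this it suffices that $\det(t^nA-I)$ be a nonzero element of $\Ft$, and it is: expanded as a Laurent polynomial in $t$, its coefficient of $t^0$ equals $\det(-I)=(-1)^{\dim\mathbb V}\neq0$ — here the hypothesis $n\neq0$ is exactly what prevents the remaining monomials $t^{kn}$, $k\neq0$, from contributing to $t^0$. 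Therefore $t^nA-\mathrm{id}$ is invertible over $\mathbb F(t)$, so $\bigl(t^nA-\mathrm{id}\bigr)\bigl(\mathbb V\otimes_{\mathbb F}\mathbb F(t)\bigr)=\mathbb V\otimes_{\mathbb F}\mathbb F(t)$.

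Combining the two steps, tensor the inclusion $\operatorname{im}\partial_1\supseteq\bigl(t^nA-\mathrm{id}\bigr)\bigl(\mathbb V\otimes_{\mathbb F}\Ft\bigr)$ with $\mathbb F(t)$, which is flat over $\Ft$ as recalled above: this gives $\operatorname{im}\bigl(\partial_1\otimes\mathbb F(t)\bigr)=\TC_0(X,\mathbb F(t))$, so $H_0\bigl(\TC_*(X,\mathbb F(t))\bigr)=0$; equivalently $\TA_0(X,\Ft)\otimes_{\Ft}\mathbb F(t)=0$, i.e.\ $\TA_0(X,\Ft)$ is torsion over $\Ft$. The only step carrying genuine content is the non-vanishing of $\det(t^nA-I)$; the most error-prone bookkeeping is getting the module conventions right in the explicit formula for $\partial_1$, while everything else is the standard reduction to one $0$-cell and the torsion/flatness dictionary already recorded in the text. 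It is worth noting that for $n=0$ the operator $\rho(g_{j_0})-\mathrm{id}$ may well be singular (for instance whenever $1$ is an eigenvalue of $\rho(g_{j_0})$), so the non-triviality of $\varepsilon$ cannot be dropped; conversely no assumption on $\rho$ is used, and surjectivity of $\varepsilon$ plays no role, so the possible disconnectedness of $X_\infty$ is irrelevant here.
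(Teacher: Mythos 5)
Your argument is correct, and it is the standard proof of this fact: the paper itself gives no proof (it simply cites Kirk--Livingston), and their argument is exactly the one you give, namely that $\TA_0$ is the cokernel of $\partial_1$, which is killed by $t^{\varepsilon(g)}\rho(g)-\mathrm{id}$ for any generator $g$, and for some $g$ with $\varepsilon(g)\neq 0$ the determinant $\det\bigl(t^{\varepsilon(g)}\rho(g)-I\bigr)$ is a nonzero Laurent polynomial (constant coefficient $(-1)^{\dim\mathbb V}$), so the cokernel is torsion. Your observation that only non-triviality, not surjectivity, of $\varepsilon$ is needed is also consistent with the statement as quoted.
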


See \cite{WD} for examples in the case of knots. Here are two other important examples.

\subsection{Generalized Hopf link}

Let $K$ be the generalized Hopf link in $S^3$, that is, the link with $d\geq 2$ components and linking number one for each pair of components. We calculate the twisted Alexander polynomials of the link exterior $S^3\setminus K$.

\begin{lem}
If $V\subset \C^n$ is a reduced hypersurface defined by a homogeneous polynomial and having a hyperplane as one of its components, then the restriction of the Hopf bundle $$\C^n\setminus V \rightarrow \mathbb{P}^{n-1}\setminus [V]$$ is trivial.
\end{lem}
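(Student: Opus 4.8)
The plan is to exhibit an explicit global section of the restricted Hopf bundle; since a principal $\C^*$-bundle (equivalently, a complex line bundle with its zero section deleted) is trivial precisely when it admits a section, this suffices. The point of the hyperplane hypothesis is that it furnishes a linear form that is nowhere zero on the base $\mathbb{P}^{n-1}\setminus[V]$, and this is exactly what one needs in order to write such a section down.

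Concretely, let $f$ be the homogeneous polynomial defining $V$, and write $f=\ell\cdot g$, where $\ell$ is a linear form cutting out the hyperplane component $H\subset V$ and $g$ is homogeneous of degree $\deg f-1$. Then $[H]\subseteq[V]$, so the base $\mathbb{P}^{n-1}\setminus[V]$ is contained in the affine chart $\mathbb{P}^{n-1}\setminus[H]=\{\,[z]:\ell(z)\neq 0\,\}$, on which $\ell$ is nonvanishing. Define
$$s\colon \mathbb{P}^{n-1}\setminus[V]\longrightarrow \C^n\setminus V,\qquad s([z])=\frac{z}{\ell(z)}.$$
First I would check that $s$ is well defined: replacing $z$ by $\mu z$ with $\mu\in\C^*$ leaves $z/\ell(z)$ unchanged. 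Next, because $V$ is the cone over $[V]$ (being defined by a homogeneous polynomial), a point lies in $V$ if and only if every nonzero scalar multiple of it does; hence $s([z])\in V$ would force $z\in V$, i.e. $[z]\in[V]$, contrary to assumption. So $s$ indeed takes values in $\C^n\setminus V$, and by construction the Hopf projection sends $s([z])$ back to $[z]$, so $s$ is a section.

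From the section one builds the trivialization directly. Define
$$\Phi\colon \big(\mathbb{P}^{n-1}\setminus[V]\big)\times\C^*\longrightarrow \C^n\setminus V,\qquad \Phi([z],\lambda)=\lambda\, s([z]),$$
with candidate inverse $w\mapsto\big([w],\,\ell(w)\big)$. The remaining step is the routine verification that these two maps are mutually inverse — using $\ell(s([z]))=1$ and $s([w])=w/\ell(w)$ — that both are continuous (indeed holomorphic/algebraic), and that $\Phi$ commutes with the projections to the base. This exhibits the restricted Hopf bundle as the product bundle, hence as trivial.

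I do not expect a genuine obstacle here: the one thing to get right is the observation that the hyperplane component is what makes a globally nonvanishing linear form available, after which the section and the trivialization are essentially forced. (Note that reducedness of $V$ plays no role in the argument: everything takes place at the level of the underlying point sets, and only the cone property is used.)
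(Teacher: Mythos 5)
Your argument is correct and is essentially the same as the paper's: the paper observes that the Hopf bundle is already trivial over $\mathbb{P}^{n-1}\setminus[H]\cong\C^{n-1}$ (precisely because the linear form $\ell$ cutting out $H$ is nonvanishing there) and then restricts further to $\mathbb{P}^{n-1}\setminus[V]$, while you make that trivialization explicit by writing the section $[z]\mapsto z/\ell(z)$ directly. Same key idea, just unwound concretely.
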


\begin{proof}
Let $H$ be a hyperplane component of $V$. Then the Hopf bundle $$\C^n\setminus H \rightarrow \mathbb{P}^{n-1}\setminus [H] \cong \C^{n-1}$$ is trivial, in particular any of its restrictions are trivial.
\end{proof}

\begin{prop}
$$\pi_1(S^3\setminus K)\cong  \Z \times \mathbb{F}_{d-1} \cong <x_0,x_1,...,x_{d-1}| x_0x_ix_0^{-1}x_i^{-1}, i=1,...,d-1>.$$
\end{prop}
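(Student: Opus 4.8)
The plan is to identify $S^3\setminus K$ with a homogeneous hypersurface complement and then invoke the preceding lemma. First I would realize $K$ concretely: let $\ell_1,\dots,\ell_d$ be pairwise non-proportional linear forms on $\C^2$ and set $V=\{\ell_1\cdots\ell_d=0\}$, a union of $d$ distinct lines through the origin. Its link $V\cap S^3$ is a disjoint union of $d$ fibers of the Hopf fibration $S^3\to\mathbb{P}^1$, and any two distinct Hopf fibers have linking number $+1$; hence $V\cap S^3$ is isotopic to the generalized Hopf link $K$, and it suffices to compute $\pi_1(S^3\setminus(V\cap S^3))$. Since $V$ is a real cone, the radial retraction $z\mapsto z/|z|$ of $\C^2\setminus\{0\}$ onto $S^3$ restricts to a deformation retraction of $\C^2\setminus V=(\C^2\setminus\{0\})\setminus(V\setminus\{0\})$ onto $S^3\setminus(V\cap S^3)$, so $\pi_1(S^3\setminus K)\cong\pi_1(\C^2\setminus V)$.

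Next I would apply the lemma with $n=2$. Every component of $V$ is a line, i.e.\ a hyperplane of $\C^2$, so the Hopf bundle $\C^2\setminus V\to\mathbb{P}^1\setminus[V]$ is trivial, giving a homeomorphism $\C^2\setminus V\cong\C^*\times(\mathbb{P}^1\setminus[V])$. Here $[V]$ is a set of $d$ distinct points on $\mathbb{P}^1\cong S^2$, so $\mathbb{P}^1\setminus[V]$ is homotopy equivalent to a wedge of $d-1$ circles and has fundamental group the free group $\mathbb{F}_{d-1}$, while $\pi_1(\C^*)\cong\Z$. Taking fundamental groups of the product then yields $\pi_1(S^3\setminus K)\cong\Z\times\mathbb{F}_{d-1}$.

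For the presentation I would write $x_0$ for a generator of the $\Z$-factor (the class of a $\C^*$-fiber, equivalently a meridian of any component) and $x_1,\dots,x_{d-1}$ for a free basis of the $\mathbb{F}_{d-1}$-factor (meridians of $d-1$ of the link components). The direct product $\Z\times\mathbb{F}_{d-1}$ is then presented by these $d$ generators subject only to the relations asserting that $x_0$ commutes with each $x_i$, that is, $x_0x_ix_0^{-1}x_i^{-1}=1$ for $i=1,\dots,d-1$; no further relations occur because neither factor contributes any. This is exactly the presentation in the statement.

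The only genuinely delicate point is the first step: the condition ``linking number one for each pair of components'' does not by itself determine a link up to isotopy, so one must use the standard convention that the generalized Hopf link is the one built from $d$ Hopf fibers, which is precisely $V\cap S^3$. After that identification, everything is routine. (If one wishes to bypass the algebraic model, one can instead argue directly that removing $d$ Hopf fibers from $S^3$ leaves a principal $S^1$-bundle over $S^2$ minus $d$ points, and that bundle is trivial because its base is homotopy equivalent to a $1$-complex, so $H^2$ of the base vanishes; this recovers the product decomposition $S^1\times(S^2\setminus\{d\text{ points}\})$ without reference to the lemma.)
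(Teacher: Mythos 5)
Your proof is correct and coincides with the paper's second proof of this proposition: identify $S^3\setminus K$ with the complement $W$ of a central line arrangement of $d$ lines in $\C^2$, apply Lemma~2.4 to obtain $W\cong\C^*\times(\mathbb{P}^1\setminus\{d\text{ points}\})$, and conclude $\pi_1\cong\Z\times\mathbb{F}_{d-1}$ with the stated presentation. You flesh out the initial identification more than the paper does (via Hopf fibers and a conical deformation retraction, where the paper simply cites Libgober) and correctly flag the convention needed to pin down \emph{the} generalized Hopf link, but the route is the same.
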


\begin{proof}
There are two proofs. Both use the fact (\cite{AC}), that $S^3 \setminus K$ is homotopy equivalent to the link exterior of the singularity $x^d=y^d \subset \C^2$. Let $W:=\C^2 \setminus \A$, where $\A$ is the central line arrangement of $d$ lines. Then $S^3\setminus K \simeq \C^2 \setminus \A$.

By p.117 in \cite{DB}, the fundamental group is computed directly using Van-Kampen theorem, 
$$\pi_1(S^3\setminus K) \cong \pi_1(W)  = <x_0,x_1,...,x_d| x_dx_{d-1}\cdots x_1 x_0^{-1}, x_0x_ix_0^{-1}x_i^{-1}, i=1,...,d>.$$

So, $\pi_1(S^3\setminus K) \cong <x_0,x_1,...,x_{d-1}| x_0x_ix_0^{-1}x_i^{-1}, i=1,...,d-1>$.

The second proof uses the previous lemma on $W$. We have $$W \cong \C^*\times (\mathbb{P}^1\setminus \{\text{ d points}\}).$$

Hence, $S^3\setminus K \simeq S^1\times( \vee^{d-1} S^1)$, which gives the desired presentation of the fundamental group.

\end{proof}

\begin{remark}
\cite{DB} show that the generators $x_1,...,x_d$ in the presentation $$ <x_0,x_1,...,x_d| x_dx_{d-1}\cdots x_1 x_0^{-1}, x_0x_ix_0^{-1}x_i^{-1}, i=1,...,d>$$ are given by meridians around the $d$ lines in $\A$.
\end{remark}

\begin{remark}
Note that $W$ is a minimal space with 0-th, 1-st, and 2-nd integral homology being $\Z$, $\Z^d$, and $\Z^{d-1}$ respectively. So, the minimal CW-complex structure of $W$ is given by $S^1\times( \vee^{d-1} S^1)$.
\end{remark}

Part of the following theorem recalls lemma 5.1 in \cite{BM} with our convention described in this paper of twisted Alexander polynomials.

\begin{thm} Let $$\varepsilon: \pi_1(S^3\setminus K) \rightarrow \Z$$ be an epimorphism with $$\varepsilon(x_0) \neq 0$$ and  $$\rho:\pi_1(S^3\setminus K) \rightarrow GL(\mathbb{V}) = GL_r(\mathbb{F})$$ be a linear representation.

Then

\begin{itemize}
\item $\TA_i(S^3\setminus K,\Ft)$ are torsion over $\Ft$ for all $i\geq 0$.
\item $\TA_i(S^3\setminus K,\Ft)=0$ for $i\geq 2$. 
\item $\Delta_0^{\varepsilon,\rho}$ is the greatest common divisor of the $r\times r$ minors of the column matrix formed by $$\rho(x_i)t^{\varepsilon(x_i)} -Id,$$ where $i=0,...,d-1$.
\item $\Delta_1^{\varepsilon,\rho}/\Delta_0^{\varepsilon,\rho} = (\det(\rho(x_0)t^{\varepsilon(x_0)}-Id))^{d-2}$.
\end{itemize}

\end{thm}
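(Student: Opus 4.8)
The plan is to exploit the product structure established in the previous proposition: since $S^3 \setminus K \simeq S^1 \times (\vee^{d-1} S^1)$, the universal cover is $\R \times T$ where $T$ is the universal cover of $\vee^{d-1} S^1$ (an infinite tree), and the cellular chain complex $C_*(\widetilde{S^3\setminus K};\mathbb{F})$ is a tensor product of the cellular complexes of $\R$ and $T$ over $\mathbb{F}$. Concretely, with the minimal CW-structure having one $0$-cell, $d$ $1$-cells (meridians $x_0,\dots,x_{d-1}$), and $d-1$ $2$-cells (coming from the commutator relations $x_0 x_i x_0^{-1} x_i^{-1}$), the twisted chain complex $\TC_*(S^3\setminus K, \Ft)$ is the free $\Ft$-module complex
\[
0 \to (\Ft \otimes \mathbb{V})^{d-1} \xrightarrow{\partial_2} (\Ft \otimes \mathbb{V})^{d} \xrightarrow{\partial_1} \Ft \otimes \mathbb{V} \to 0,
\]
where $\partial_1$ is the column matrix with blocks $\rho(x_i)t^{\varepsilon(x_i)} - Id$ for $i=0,\dots,d-1$ (this is the standard Fox-calculus formula, using that $\varepsilon\otimes\rho$ sends $x_i$ to $\rho(x_i)t^{\varepsilon(x_i)}$), and $\partial_2$ is the Jacobian of the relators: the $i$-th relator $r_i = x_0 x_i x_0^{-1} x_i^{-1}$ has Fox derivatives $\partial r_i/\partial x_0 = 1 - x_0 x_i x_0^{-1}$ and $\partial r_i/\partial x_i = x_0 - x_0 x_i x_0^{-1} x_i^{-1} = x_0 - 1$ (on relators, since $r_i=1$), so after applying $\varepsilon\otimes\rho$, the $\partial_2$ matrix has entries built from $Id - \rho(x_0)\rho(x_i)\rho(x_0)^{-1}t^{\varepsilon(x_i)}$ and $\rho(x_0)t^{\varepsilon(x_0)} - Id$.

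First I would verify torsionness and vanishing in high degrees. Vanishing $\TA_i = 0$ for $i \geq 2$ is immediate since there are no cells above dimension $2$. For torsion, I would tensor with $\mathbb{F}(t)$ and show the complex becomes acyclic; equivalently, compute Euler characteristics and show each homology group has the expected rank. Using the product decomposition $S^3 \setminus K \simeq S^1 \times (\vee^{d-1}S^1)$ and a Künneth-type argument for twisted coefficients — or more simply, directly checking that over $\mathbb{F}(t)$ the map $\partial_1$ is surjective (this uses $\varepsilon(x_0)\neq 0$, so $\det(\rho(x_0)t^{\varepsilon(x_0)} - Id) \neq 0$ in $\mathbb{F}(t)$, making the $x_0$-block alone invertible) — I get $\TA_0 \otimes \mathbb{F}(t) = 0$. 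Then an Euler-characteristic count $\chi = r(1 - d + (d-1)) = 0$ forces $\TA_1 \otimes \mathbb{F}(t) = 0$ as well once one checks $\partial_2 \otimes \mathbb{F}(t)$ is injective, which again follows because its restriction to the blocks involving the invertible factor $\rho(x_0)t^{\varepsilon(x_0)} - Id$ is injective. Hence all $\TA_i$ are torsion.

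Next, the formula for $\Delta_0^{\varepsilon,\rho}$: by definition $\TA_0 = \mathrm{coker}(\partial_1)$, and the order of the torsion of a cokernel of a presentation matrix $M: (\Ft\otimes\mathbb{V})^d \to \Ft\otimes\mathbb{V}$ is, by the structure theorem over the PID $\Ft$, the first nonzero Fitting ideal generator, i.e.\ the gcd of the maximal ($r\times r$) minors of the $r \times rd$ matrix formed by stacking the blocks $\rho(x_i)t^{\varepsilon(x_i)} - Id$ — exactly the claimed statement. For the last bullet, $\Delta_1^{\varepsilon,\rho}/\Delta_0^{\varepsilon,\rho}$, I would use the multiplicativity of orders in the exact sequence
\[
0 \to \mathrm{coker}(\partial_2) \to \TC_1/\mathrm{im}\,\partial_2 \to \cdots
\]
more precisely the identity $\mathrm{ord}(\TA_1)\cdot\mathrm{ord}(\TC_0/\mathrm{im}\,\partial_1) = \mathrm{ord}(\mathrm{coker}\,\partial_2)\cdot(\text{something})$; cleanest is to invoke the Reidemeister-torsion/order relation for a length-two complex of free modules: when the complex is $\Ft$-torsion, $\mathrm{ord}(\TA_1) = \mathrm{ord}(\TA_0)\cdot\det(\partial_2')$ where $\partial_2'$ is a suitable square submatrix, or equivalently use that the alternating product of orders equals $1$ up to the "torsion of the acyclic complexified complex," which is computed from the product structure. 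Here the product structure does the real work: twisting the Künneth formula, $\TA_*(S^1\times Y) $ is built from $\TA_*$ of $S^1$ (with the induced rank-one-over-$\mathbb{V}$ data giving the factor $\det(\rho(x_0)t^{\varepsilon(x_0)}-Id)$) tensored with the twisted homology of $Y = \vee^{d-1}S^1$, which contributes the exponent $d-2$ via an Euler-characteristic-type computation on the wedge (the wedge of $d-1$ circles has $\chi = 2-d$, and the reduced twisted homology is concentrated in degree one with order related to $\det$ of the product of the relevant blocks). I would then match this against the direct Fox-calculus computation of $\det \partial_2$ restricted to an $r(d-1)\times r(d-1)$ minor, getting $(\det(\rho(x_0)t^{\varepsilon(x_0)} - Id))^{d-2}$ after cancelling the $\Delta_0$ contribution.

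The main obstacle I anticipate is bookkeeping in the last bullet: carefully justifying the order-multiplicativity that yields a clean ratio $\Delta_1/\Delta_0$ rather than just a divisibility, and correctly identifying which $r(d-1)$ columns of $\partial_1$ to "use up" against $\partial_2$ so that the leftover determinant is exactly $(\det(\rho(x_0)t^{\varepsilon(x_0)}-Id))^{d-2}$ and not off by a factor of $\det(\rho(x_0)t^{\varepsilon(x_0)}-Id)^{\pm 1}$ or by some unit involving the other $\rho(x_i)$. The product structure $S^3\setminus K \simeq S^1\times(\vee^{d-1}S^1)$ is what makes this tractable: it predicts the answer and, via the twisted Künneth theorem, reduces the determinant computation to the one-circle case times a count on the wedge, so I would lean on that as the primary argument and use the explicit Fox matrices only as a cross-check.
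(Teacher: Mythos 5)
Your proposal follows essentially the same route as the paper: Fox calculus on the commutator presentation yields the three-term free complex $0\to \Ft^{r(d-1)}\to \Ft^{rd}\to \Ft^r\to 0$, invertibility of the block $\rho(x_0)t^{\varepsilon(x_0)}-Id$ over $\mathbb{F}(t)$ (from $\varepsilon(x_0)\neq 0$) kills $\TA_2$ and, combined with $\chi(S^3\setminus K)=0$, forces $\TA_1$ to be torsion, while $\Delta_0$ is read off as the gcd of $r\times r$ minors of $\partial_1$. For the final ratio the paper simply invokes Theorem 4.1 of Kirk--Livingston (the order/Wada-invariant relation, i.e.\ delete the $x_0$-column of $\partial_2$ to get $(\det(\rho(x_0)t^{\varepsilon(x_0)}-Id))^{d-1}$ and divide by the corresponding $\partial_1$-block), which is exactly the ``order-multiplicativity'' identity you gesture at; your Künneth-via-product-structure framing is a legitimate cross-check but not needed once that relation is cited.
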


\begin{proof}
We use Fox calculus (\cite{FC}, \cite{KL}) and theorem 4.1 in \cite{KL}. Since $\varepsilon$ is onto, $\TA_0(S^3\setminus K,\Ft)$ is torsion over $\Ft$ (cf. Proposition 2.1). By the cellular structure of $W$, we obtain the twisted chain complex as follows:
$$0\rightarrow \Ft^{r(d-1)} \xrightarrow{\partial_2} \Ft^{rd} \xrightarrow{\partial_1} \Ft^r\rightarrow 0.$$ This proves that any $i$-th twisted Alexander module with $i\geq 3$ is trivial. Following \cite{KL}, $\partial_1$ is the column matrix with $i$-th entry $$\rho(x_i)t^{\varepsilon(x_i)} - Id,$$ which yields the desired description of $\Delta_0^{\varepsilon,\rho}$.

By applying Fox calculus to the presentation $$\pi_1(S^3\setminus K) \cong <x_0,x_1,...,x_{d-1}| x_0x_ix_0^{-1}x_i^{-1}, i=1,...,d-1>,$$ we know $\partial_2$ is a $(d-1)\times d$ matrix with entries in $M_r(\Ft)$ given by the matrix of Fox derivatives of the relations, tensored with $\Ft^r$. Therefore, $\partial_2$ equals
{\footnotesize{\[ \left( \begin{array}{ccccc}
Id-\rho(x_1)t^{\varepsilon(x_1)} & \rho(x_0)t^{\varepsilon(x_0)}-Id & 0 & \cdots & 0 \\
Id-\rho(x_2)t^{\varepsilon(x_2)} & 0 & \rho(x_01)t^{\varepsilon(x_0)}-Id & \cdots & 0 \\
\vdots & \vdots& \vdots & \vdots & \vdots \\
Id-\rho(x_{d-2})t^{\varepsilon(x_{d-2})} & 0 & \cdots & \rho(x_0)t^{\varepsilon(x_0)}-Id & 0 \\
Id-\rho(x_{d-1})t^{\varepsilon(x_{d-1})} & 0 & \cdots & 0 & \rho(x_0)t^{\varepsilon(x_0)}-Id \end{array} \right)\]

}}

$\varepsilon(x_0)\neq 0$ guarantees that Ker($\partial_2$)=0. Therefore, $\TA_2(S^3\setminus K,\Ft)=0.$ Finally, using the fact that $\chi(S^3\setminus K)=0$ (recall $b_0=1, b_1=d, b_2=d-1$), we obtain $$\text{rank}_{\Ft}(\TA_1(S^3\setminus K,\Ft)) = -\chi(S^3\setminus K) =0.$$ Hence the first twisted Alexander module is also torsion. Then by theorem 4.1 in \cite{KL}, we obtain that $$\Delta_1^{\varepsilon,\rho}/\Delta_0^{\varepsilon,\rho} = (\det(\rho(x_0)t^{\varepsilon(x_0)}-Id))^{d-2}.$$

\end{proof}

\subsection{Plane curve $A_{\text{odd}}$ Singularities}

Let $C=\{ x^2-y^{2n}=0\} \subset \C^2$. The germ $(C,0)$ is known as the $\textbf{A}_{2n-1}$ singularity. $C$ is the union of two smooth curves which intersect non-transversely. By \cite{AO}, $$\pi_1(\C^2\setminus C) \cong G(2,2n) = <a_i,\beta | \beta = a_1a_0, R_1, R_2>,$$ where $$R_1: a_{i+2n} =a_i, i=0,...,2n-1$$ and $$R_2: a_{i+2} = \beta^{-1}a_i\beta , i=0,...,2n-1.$$

Therefore, $\pi_1(\C^2\setminus C)\cong$  $\begin{array}{c}
<a_0,a_1,...,a_{2n-1},\beta | a_1a_0\beta^{-1},\\
\beta a_2 \beta^{-1}a_0^{-1}, \beta a_4 \beta^{-1}a_2^{-1},...,\beta a_0 \beta^{-1} a_{2n-2}^{-1},\\
\beta a_3 \beta^{-1}a_1^{-1}, \beta a_5 \beta^{-1}a_3^{-1},...,\beta a_1 \beta^{-1} a_{2n-1}^{-1}>

\end{array}$

The untwisted Alexander matrix for $\textbf{A}_3$ singularity, from which one can get the general pattern, is:

\[ \left( \begin{array}{ccccc}
-1 & 0 & \beta & 0 & 1-a_0 \\
\beta & 0 & -1 & 0 & 1-a_2 \\
0 & -1 & 0 & \beta & 1-a_1 \\
0 & \beta & 0 & -1 & 1-a_3 \\
a_1 & 1 & 0 & 0 & -1
\end{array} \right)\]

It is easy to compute that the associated twisted Alexander matrix for $\textbf{A}_{2n-1}$ has trivial kernel. Hence, $\partial_2$ in the twisted chain complex has trivial kernel. Therefore, $\TA_1(\C^2\setminus C,\Ft)$ is a torsion module.

\begin{cor}
Let $C$ be the curve given by $y(y-x^n)=0 \subset \C^2$. Then the first twisted Alexander module of $\C^2\setminus C$ is a torsion $\Ft$-module.
\end{cor}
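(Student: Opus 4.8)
The plan is to reduce the statement to the $\mathbf{A}_{2n-1}$ computation carried out just above, by exhibiting a polynomial automorphism of $\C^2$ that carries the curve $C=\{y(y-x^n)=0\}$ onto the $\mathbf{A}_{2n-1}$ curve $C'=\{x^2-y^{2n}=0\}$. First I would complete the square: with $f(x,y)=y(y-x^n)=y^2-x^ny$ one has $f(x,\,y+\tfrac12 x^n)=y^2-\tfrac14 x^{2n}$, so the triangular automorphism $\Phi(x,y)=(x,\,y+\tfrac12 x^n)$ of $\C^2$ (a polynomial map with polynomial inverse $(x,y)\mapsto(x,\,y-\tfrac12 x^n)$) satisfies $\Phi^{-1}(C)=\{y^2-\tfrac14 x^{2n}=0\}$. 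Interchanging the two coordinates and rescaling one of them turns this last curve into $C'$, so altogether we obtain an algebraic automorphism $F\colon\C^2\to\C^2$ with $F(C)=C'$.

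Since $F$ maps the complement of $C$ biregularly onto the complement of $C'$, it is in particular a homotopy equivalence $\C^2\setminus C\xrightarrow{\ \sim\ }\C^2\setminus C'$. I would then transport the data along the induced isomorphism $F_*\colon\pi_1(\C^2\setminus C)\xrightarrow{\ \sim\ }\pi_1(\C^2\setminus C')$, putting $\varepsilon'=\varepsilon\circ F_*^{-1}$ and $\rho'=\rho\circ F_*^{-1}$. Because twisted Alexander modules are homotopy invariants of the triple, $\TA_1(\C^2\setminus C,\Ft)$ is isomorphic as a $\Ft$-module to the first twisted Alexander module of $(\C^2\setminus C',\varepsilon',\rho')$. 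The latter is torsion over $\Ft$ by the preceding subsection: there the twisted Alexander matrix $\partial_2$ of the $\mathbf{A}_{2n-1}$ complement was seen to have trivial kernel, and the Euler characteristic count then forces $\mathrm{rank}_{\Ft}\TA_1=0$; hence so is $\TA_1(\C^2\setminus C,\Ft)$.

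There is not much of an obstacle here; the one point to be careful about is that the coordinate change is a \emph{global} polynomial automorphism of $\C^2$, not merely a local analytic equivalence of singularity germs, which is exactly what allows us to pass from the well-known identification of the germ $(C,0)$ with the $\mathbf{A}_{2n-1}$ germ to an honest isomorphism of the two affine complements. (It works here because completing the square introduces only the polynomial correction $\tfrac12 x^n$.) One should also note that, $F_*$ being an isomorphism, any admissible pair $(\varepsilon,\rho)$ on $\C^2\setminus C$ corresponds to an admissible pair on $\C^2\setminus C'$, so the $\mathbf{A}_{2n-1}$ torsionness statement is being invoked in exactly the generality needed.
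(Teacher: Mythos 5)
Your proposal is correct and reaches the same conclusion by the same underlying idea, namely reducing to the $\mathbf{A}_{2n-1}$ computation of the preceding subsection, but your execution is genuinely more careful than what appears in the paper. The paper's entire proof is the one-line statement that the germs $(y(y-x^n),0)$ and $(y^2-x^{2n},0)$ are diffeomorphic. Taken literally, a germ diffeomorphism only identifies the \emph{local} link pairs at the origin; to deduce anything about the global affine complements one still needs the (unstated) observation that both polynomials are weighted homogeneous of type $(1,n;2n)$, so that each affine complement deformation retracts onto the corresponding link complement via the $\C^*$-action $\lambda\cdot(x,y)=(\lambda x,\lambda^n y)$. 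Your construction of an explicit global polynomial automorphism of $\C^2$ — complete the square with the triangular map $(x,y)\mapsto(x,y+\tfrac12 x^n)$, then swap and rescale coordinates — bypasses that intermediate step entirely, furnishing directly a biregular (hence homeomorphic) identification $\C^2\setminus C\cong\C^2\setminus C'$. Transporting $(\varepsilon,\rho)$ through the resulting isomorphism of fundamental groups and invoking homotopy invariance of the twisted Alexander modules then finishes the argument exactly as you say. In short, you have filled in a small gap that the paper glosses over, and in doing so you also explain \emph{why} the two germs are diffeomorphic, which the paper merely asserts.
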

\begin{proof}
Use the fact that the germs $(y(y-x^n),0)$ and $(y^2-x^{2n},0)$ are diffeomophic. 
\end{proof}

\section{Twisted Alexander polynomials of complex curves}
\subsection{Definition}

The understanding of $\varepsilon:\pi_1\rightarrow \Z$ is desired to define the twisted Alexander modules. Since $\Z$ is an abelian group, $\varepsilon$ factors through the abelianization map from $\pi_1$ to $H_1$. Because of the nice geometric description for the first homology of complex curves complements, characterizing all associated $\varepsilon$'s is possible.

Let $C$ be a reduced curve in $\C \mathbb{P}^2$ of degree $d$ with $r$ irreducible components and let $L$ be a line in $\C \mathbb{P}^2$. Denote $$U=\C \mathbb{P}^2 \setminus (C\cup L) = \C^2 \setminus (C\setminus (C\cap L))$$ using the natural identification of $\C^2$ with $\C \mathbb{P}^2 \setminus L$. Alternatively, let $f(x,y): \C^2 \rightarrow \C$ be a square-free polynomial of degree $d$. Then projectivize $f=0$ and call $C$ the zero locus in $\C \mathbb{P}^2$. $L$ is given by $z=0$ and $U=\C^2 \setminus \{f=0\}$.

It is well known that $H_1(U) \cong \Z^r$ is generated by the homology classes $\nu_i$ of meridians $\gamma_i$ of the components $C_i$ of $C$, see corollary 4.1.4 in \cite{DB}.

Let $n_1,...,n_r$ be positive integers with gcd($n_1,...n_r$)=1. Then $\varepsilon: \pi_1 (U) \xrightarrow{ab} H_1(U) \xrightarrow{\psi: \nu_i \rightarrow n_i} \Z$ defines an epimorphism. If all $n_i =1$, then $\varepsilon$ is the total linking number homomorphism $$lk\#: \pi_1 (U) \xrightarrow{[\alpha] \rightarrow lk(\alpha,C \cup -dL)} \Z.$$ The consideration of some $n_i > 1$ allows us to study non-reduced polynomials $f$ with the algebraic linking number homomorphism.

Fix a finite dimensional $\mathbb{F}$-vector space $\mathbb{V}$ and a linear representation $\rho: \pi_1(U) \rightarrow GL(\mathbb{V}).$ Using section 2, the $\Ft$-modules $\TA_i(U;\Ft)$ are defined for all $i$ and are called the $i$-th twisted Alexander modules of $C$ with respect to $L$. 

If $L$ and $L'$ are two generic lines to $C$ in $\C\mathbb{P}^2$, then $U_L$ is homotopy equivalent to $U_{L'}$. So, if the choice of $L$ is made generically, then the notion of affine curve complement $U$ of $C$ is well defined up to homotopy. 

When we focus on the twisted Alexander modules associated to the total linking number homomorphism, we will denote $$H_i^{\rho}(U,\Ft) = H^{lk\#,\rho}_i(U,\Ft)$$ to indicate that the choice $\varepsilon = lk\#$ is understood.

\subsection{Motivation}

Above definition generalizes the classical Alexander modules. Let $\tilde{U}$ be the universal covering of $U$. Take $\mathbb{V}=\mathbb{F}=\Q$, $n_i=1$, and $\rho$ to be trivial. Then 
\begin{align*}
(\Ft \otimes \mathbb{V}) \otimes_{\mathbb{F}[\pi]}C_*(\tilde{U};\mathbb{F})& =(\Qt\otimes \Q) \otimes_{\mathbb{\Q}[\pi]}C_*(\tilde{U};\Q) \\
                & \cong \Qt\otimes_{\mathbb{\Q}[\pi]}C_*(\tilde{U};\Q)\\
                & = C_*(U;\Qt).
\end{align*}

The last equality can be found in \cite{AH}. The homology of the complex $C_*(U;\Qt)$ yields the Alexander modules studied in \cite{AC}.

Because $U$ is the complement of an affine curve, $U$ is isomorphic to a smooth affine hypersurface in $\C^3$. Hence $U$ is a complex 2-dimensional Stein manifold. Therefore $U$ has the homotopy type of a real 2-dimensional finite CW-complex (see 1.6.8 in \cite{DB} for details). Hence, $\TA_i(U;\Ft) =0$ for $i>3$ and $\TA_2(U;\Ft)$ is a free $\Ft$-module. For $i=0,1$, $\TA_i(U;\Ft)$ are finite type over $\Ft$.

Interesting questions are: What are the free ranks of $\TA_i(U;\Ft)$ over $\Ft$? In particular, under what conditions are these modules torsion over $\Ft$? As an example and partial answer to above questions, we recall some classical results.

\begin{thm}\cite{AC} If $X$ is a connected finite CW-complex with $H_1(X,\Z)\cong \Z$, then $H_1(X,\Qt)$ is a torsion $\Qt$-module.
\end{thm}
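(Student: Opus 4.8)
The plan is to reduce the statement to a short exact sequence of $\Qt$-chain complexes and then run a rank count over the principal ideal domain $\Qt$. (Throughout, $\rho$ is trivial and $\mathbb F=\mathbb V=\Q$, so the ``twisted'' Alexander modules here are the classical ones.)

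First I would record what the hypothesis provides. Since $\Z$ is abelian, the isomorphism $H_1(X,\Z)\cong\Z$ yields a canonical epimorphism $\varepsilon\colon\pi_1(X)\xrightarrow{\mathrm{ab}}H_1(X,\Z)\cong\Z$, so that $H_1(X,\Qt)$ is the first twisted Alexander module $\TA_1(X,\Qt)$ of the triple $(X,\varepsilon,\mathrm{triv})$, equivalently $H_1(X_\infty;\Q)$ with the $\Qt$-action of the deck transformation. Because $X$ is a finite CW-complex, $C_*(X;\Qt)$ is a bounded complex of finitely generated free $\Qt$-modules; as $\Qt$ is a PID, each $H_i(X,\Qt)$ is then a finitely generated $\Qt$-module, while each $H_i(X;\Q)$ is finite-dimensional over $\Q$, with $\dim_\Q H_1(X;\Q)=\dim_\Q\bigl(H_1(X,\Z)\otimes\Q\bigr)=1$.

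Next I would tensor the short exact sequence of $\Qt$-modules $0\to\Qt\xrightarrow{\,\cdot(t-1)\,}\Qt\to\Q\to0$ with the complex of free $\Qt$-modules $C_*(X;\Qt)$; freeness of each chain group keeps the sequence exact and produces a short exact sequence of complexes $0\to C_*(X;\Qt)\xrightarrow{\,t-1\,}C_*(X;\Qt)\to C_*(X;\Q)\to0$, where one uses the identification $C_*(X;\Qt)\otimes_{\Qt}\Q\cong\Q\otimes_{\Q[\pi]}C_*(\tilde{X})=C_*(X;\Q)$. The associated long exact homology sequence contains the five-term piece
$$H_1(X,\Qt)\xrightarrow{\,t-1\,}H_1(X,\Qt)\xrightarrow{\,j\,}H_1(X;\Q)\xrightarrow{\,\partial\,}H_0(X,\Qt)\xrightarrow{\,t-1\,}H_0(X,\Qt).$$
Since $\varepsilon$ is onto, $X_\infty$ is connected, so $H_0(X,\Qt)\cong\Qt/(t-1)\cong\Q$ with $t$ acting trivially; hence the rightmost map is zero, $\partial$ is surjective, and as the source and target of $\partial$ are both one-dimensional over $\Q$, $\partial$ is an isomorphism. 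Exactness then forces $j=0$, i.e.\ multiplication by $t-1$ is surjective on $H_1(X,\Qt)$.

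Finally I would conclude with the module-theoretic step: write $H_1(X,\Qt)\cong\Qt^{\,k}\oplus T$ with $T$ its torsion submodule. Surjectivity of $t-1$ means $H_1(X,\Qt)/(t-1)H_1(X,\Qt)=0$, but this quotient equals $\Q^{\,k}\oplus T/(t-1)T$, which contains $\Q^{\,k}$; hence $k=0$ and $H_1(X,\Qt)=T$ is a torsion $\Qt$-module. The argument is short, and I do not anticipate a genuine obstacle: the only steps needing care are the identification of $C_*(X;\Qt)\otimes_{\Qt}\Q$ with the ordinary rational chain complex and the computation $H_0(X,\Qt)\cong\Q$ with trivial $t$-action, both of which follow at once from the description $C_*(X;\Qt)\cong C_*(X_\infty;\Q)$ recalled in Section 2; everything else is formal exactness over the PID $\Qt$.
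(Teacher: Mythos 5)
Your argument is correct, and it is essentially the standard proof the paper is pointing to: the theorem is quoted from Libgober's paper without proof here, but the text explicitly identifies the method as ``the Milnor long exact sequence,'' which is exactly your short exact sequence $0\to C_*(X;\Qt)\xrightarrow{t-1}C_*(X;\Qt)\to C_*(X;\Q)\to 0$ combined with $\dim_\Q H_1(X;\Q)=1$ and the connected-cover computation $H_0(X,\Qt)\cong\Q$. The concluding rank count over the PID $\Qt$ is also sound, so there is nothing to add.
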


\begin{thm} \cite{DP} Suppose $H\subset \CP$ is a generic hyperplane to the hypersurface $V\subset \CP$. Then $V\setminus H$ has the homotopy type of a bouquet of spheres of dimension $n$.
\end{thm}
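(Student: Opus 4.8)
The plan is to combine an Andreotti--Frankel type upper bound on the homotopy dimension of $V\setminus H$ with a Lefschetz type lower bound on its connectivity, and then to invoke the standard recognition principle for bouquets of spheres. Writing $H=\{\ell=0\}$ and identifying $\C^{n+1}$ with $\CP\setminus H$, the fact that a generic $H$ contains no irreducible component of $V$ shows that $V\setminus H$ is the affine hypersurface cut out in $\C^{n+1}$ by the dehomogenization of the defining equation of $V$ along $\ell$; thus $V\setminus H$ is a closed subvariety of $\C^{n+1}$ of pure complex dimension $n$, hence a Stein space of dimension $n$. By the Andreotti--Frankel theorem in the form valid for (possibly singular) Stein spaces due to Hamm --- the analogue of the fact used for the curve complement $U$ above, cf.\ \cite{DB} --- $V\setminus H$ has the homotopy type of a finite CW complex of real dimension $\leq n$. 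In particular $H_i(V\setminus H;\Z)=0$ for $i>n$, and $H_n(V\setminus H;\Z)$ is free, being the group of cellular $n$-cycles in a finitely generated free abelian group.

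The crux is to show that $V\setminus H$ is $(n-1)$-connected; here the genericity of $H$ --- and the fact that $V$ is a hypersurface rather than an arbitrary projective variety --- is indispensable, and for a non-generic $H$ the conclusion fails. For the homology I would run the long exact sequence of the pair $(V,V\setminus H)$. Since $H$ is generic, $V\cap H\hookrightarrow V$ is a normally nonsingular embedding of real codimension two, so excision and a Thom isomorphism identify $H_k(V,V\setminus H;\Z)$ with $H_{k-2}(V\cap H;\Z)$ and turn the connecting maps into intersection with the class $[V\cap H]$. Because $V$ and its generic hyperplane section are hypersurfaces, the Lefschetz hyperplane theorem --- in the stratified Morse theoretic form of Goresky--MacPherson when $V$ is singular --- shows that $H_\ast(V)$ and $H_\ast(V\cap H)$ agree with those of $\CP$ and of $\mathbb{CP}^{n}$ below the middle dimension; combined with Poincar\'e duality (or intersection homology duality in the singular case) this forces the intersection maps $H_k(V)\to H_{k-2}(V\cap H)$ to be isomorphisms for $k\leq n-1$ and onto for $k=n$, and feeding this back into the sequence (together with $H_1(V)=0$) gives $\widetilde H_i(V\setminus H;\Z)=0$ for $i<n$. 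For the fundamental group, $\pi_1(V)=0$ when $n\geq 2$ by the Lefschetz theorem for hypersurfaces, and the Lefschetz--Zariski theorem on fundamental groups of complements of generic hyperplane sections --- proved by a generic Lefschetz pencil together with the Zariski--van Kampen method --- then yields $\pi_1(V\setminus H)=0$ for $n\geq 2$. By the Hurewicz theorem, $V\setminus H$ is $(n-1)$-connected. I expect the main obstacle to lie precisely here: upgrading `$H$ generic' to the stratified transversality that legitimizes the normally nonsingular/Thom and Lefschetz inputs when $V$ is singular, and carrying out the Lefschetz pencil argument for $\pi_1$.

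It remains to assemble the two estimates. For $n\geq 2$, $V\setminus H$ is simply connected with reduced homology concentrated in degree $n$, free of some finite rank $m$; by the Hurewicz theorem $\pi_n(V\setminus H)\cong H_n(V\setminus H;\Z)\cong\Z^m$, so a map $\vee^m S^n\to V\setminus H$ whose restrictions to the wedge summands realize a basis of $\pi_n$ induces an isomorphism on integral homology in every degree. Since both spaces are simply connected CW complexes, this map is a homotopy equivalence by the homological Whitehead theorem. For $n=1$ the connectivity statement only records that $V\setminus H$ is connected, and homotopy dimension $\leq 1$ then forces it to be homotopy equivalent to a connected graph, hence to a bouquet of circles; the case $n=0$ is immediate. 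In every case $V\setminus H$ has the homotopy type of a bouquet of $n$-spheres.
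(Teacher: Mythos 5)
The paper itself gives no proof of this statement --- it is quoted from \cite{DP} --- so there is no in-house argument to compare against, and I assess your proposal on its own terms. Your skeleton is the right one: a Hamm/Andreotti--Frankel bound puts the homotopy dimension at $\le n$, a Lefschetz-type argument should give $(n-1)$-connectedness, and Hurewicz together with the homological Whitehead theorem then recognizes the bouquet. The identification of $V\setminus H$ with a closed, pure $n$-dimensional affine subvariety of $\C^{n+1}$ is also correct.

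There are, however, two genuine gaps. First, your justification of the central claim --- that the Gysin maps $\phi_k\colon H_k(V)\to H_{k-2}(V\cap H)$ are isomorphisms for $k\le n-1$ and onto for $k=n$ --- does not deliver it. Under Poincar\'e duality (smooth case), $\phi_k$ becomes the restriction $H^{2n-k}(V)\to H^{2n-k}(V\cap H)$, and weak Lefschetz only controls this restriction in cohomological degrees $\le n-1$, i.e.\ for $k\ge n+1$, which is the \emph{opposite} range from what you need; one would have to bring in hard Lefschetz to transport the information, and in the singular case the route collapses entirely, since ordinary homology of a singular hypersurface enjoys neither Poincar\'e duality nor an intersection-homology duality that interfaces with the ordinary-homology long exact sequence of the pair $(V,V\setminus H)$. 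The claim is nonetheless true, and the correct, duality-free argument is to compare the Thom/Gysin square of $(V,V\cap H)$ with that of $(\CP,\mathbb{CP}^{n})$: the two vertical inclusion maps are isomorphisms in the relevant range (an epimorphism for $k=n$ on the left) by the Goresky--MacPherson Lefschetz theorem, the bottom Gysin map for projective spaces is plainly an isomorphism in every degree, and commutativity of the square forces the asserted behaviour of $\phi_k$. Second --- and you flag this yourself --- the $\pi_1$ step is not covered by any off-the-shelf ``Lefschetz--Zariski'' theorem; the classical Zariski--van~Kampen machinery controls $\pi_1$ of \emph{complements} of hypersurfaces in $\mathbb{CP}^N$ or $\C^N$, not of a singular hypersurface with a generic hyperplane section removed. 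What is actually required is a homotopy-level Lefschetz theorem \`a la Hamm--L\^e or Goresky--MacPherson applied to the quasi-projective variety $V\setminus H$ (legitimate because a hypersurface is a local complete intersection, hence of maximal rectified homotopical depth), and even then a naive induction on $\dim V$ only yields $(n-2)$-connectedness unless it is coupled with the homological vanishing you already established. As written, this step is a real gap rather than a routine verification.
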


\begin{cor} Suppose $C\subset \C\mathbb{P}^2$ is an irreducible curve with line at infinity $L$, then $H_1(U,\Qt)$ is torsion over $\Qt$. Moreover, if $L$ is generic to $C$ and $C\setminus L$ is homotopic to a bouquet of $p$ circles, then $rank_{\Qt}H_2(U;\Qt)=p$.
\end{cor}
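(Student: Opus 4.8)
The plan is to treat the two assertions separately: the torsion statement is a direct application of Theorem 3.3, and the rank computation comes from an Euler-characteristic count once the relevant lower homology is known to be torsion.

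For the first assertion, note that since $C$ is irreducible we have $r=1$, so $H_1(U)\cong\Z$, generated by the meridian class $\nu_1$; and by construction $\varepsilon=lk\#$ sends $\nu_1$ to $1$, hence coincides with the abelianization $\pi_1(U)\to H_1(U)\cong\Z$. Choosing $\mathbb{V}=\mathbb{F}=\Q$ and $\rho$ trivial, the identification in the Motivation subsection gives $\TC_*(U,\Qt)\cong C_*(U;\Qt)$, so $H_1(U,\Qt)$ is the classical Alexander module. As $U$ is connected and homotopy equivalent to a finite CW-complex with $H_1(U,\Z)\cong\Z$, Theorem 3.3 shows $H_1(U,\Qt)$ is torsion over $\Qt$; genericity of $L$ plays no role here.

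For the second assertion I would argue by ranks over the PID $\Qt$. By the discussion preceding the statement, $U$ has the homotopy type of a finite $2$-dimensional CW-complex, so its twisted chain complex is a bounded complex of free $\Qt$-modules concentrated in degrees $0,1,2$ with $\mathrm{rank}_{\Qt}C_i$ equal to the number of $i$-cells (recall $\dim_\Q\mathbb{V}=1$). Passing to $\Q(t)$, the alternating sum of the ranks of the homology modules equals the alternating sum of the ranks of the chain modules, namely $\chi(U)$; since $\mathrm{rank}_{\Qt}H_0(U;\Qt)=0$ by Proposition 2.1 (as $\varepsilon$ is onto) and $\mathrm{rank}_{\Qt}H_1(U;\Qt)=0$ by the first assertion, we get $\mathrm{rank}_{\Qt}H_2(U;\Qt)=\chi(U)$. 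To evaluate $\chi(U)$, write $C^{\mathrm{aff}}=C\setminus(C\cap L)$, so that $\C^2=U\sqcup C^{\mathrm{aff}}$ as complex algebraic varieties; additivity of the compactly supported Euler characteristic (which equals the ordinary one here) gives $\chi(U)=\chi(\C^2)-\chi(C^{\mathrm{aff}})=1-\chi(C^{\mathrm{aff}})$. When $L$ is generic, Theorem 3.4 with $n=1$ gives $C^{\mathrm{aff}}\simeq\bigvee^p S^1$, so $\chi(C^{\mathrm{aff}})=1-p$ and hence $\mathrm{rank}_{\Qt}H_2(U;\Qt)=\chi(U)=p$.

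There is no serious obstacle: the argument is essentially bookkeeping with the Euler characteristic. The two points needing care are (i) the geometric input, recalled just before the statement, that $U$ is homotopy equivalent to a finite $2$-dimensional CW-complex --- this is what makes the rank identity legitimate and forces the twisted complex to vanish above degree $2$; and (ii) that the second assertion genuinely invokes the first, through $\mathrm{rank}_{\Qt}H_1(U;\Qt)=0$, which is why irreducibility of $C$ is assumed in both parts. One may also record that, $H_2(U;\Qt)$ being a free $\Qt$-module (again by the remarks before the statement), the rank computation upgrades to $H_2(U;\Qt)\cong\Qt^{\,p}$.
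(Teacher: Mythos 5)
Your proposal is correct and follows essentially the same line as the paper's proof: the torsion claim reduces to $H_1(U)\cong\Z$ for irreducible $C$ together with Libgober's theorem (the paper's Theorem 3.1, which you label 3.3), and the rank computation comes from the same Euler-characteristic bookkeeping over $\Qt$ after observing that $H_0$ and $H_1$ have zero rank. The only deviations are cosmetic: your cross-reference numbering is off (the two cited theorems are 3.1 and 3.2 in the paper, and the $\TA_0$ torsion statement is Proposition 2.3), and you rederive the bouquet structure of $C\setminus L$ from the Dimca--Papadima theorem even though the corollary already assumes it as a hypothesis.
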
 

\begin{proof}
It is easy to see that $\chi(U)= p$, by the additivity of Euler characteristics. On the other hand, $H_1(U)\cong \Z$ implies $H_1(U,\Qt)$ is torsion (cf. theorem 3.1). Also, $H_0(U,\Qt)\cong \Qt/(t-1) \cong \Q$ is torsion over $\Qt$. In addition, we have the equation $$rank_{\Qt}H_0(U;\Qt)-rank_{\Qt}H_1(U;\Qt)+rank_{\Qt}H_2(U;\Qt) = \chi(U),$$ hence $rank_{\Qt} H_2(U,\Qt) = \chi(U) = p $.
\end{proof}

In the irreducible case, the above questions concerning the ranks and torsion property have been answered partially, and particularly, under the assumption that $L$ is generic.

The main goal of this part of the paper is to study the terms in the equation stated in the main theorem in \cite{TC}. We will first find conditions such that the first twisted Alexander modules of $U$ are torsion and provide obstructions on the twisted Alexander polynomials. We will see that under certain geometric conditions concerning the singularities, we have torsion local modules. Hence, the acyclicity assumption in the main theorem in \cite{TC} could be discarded in many cases.

\subsection{When are $\TA_i(U,\Ft)$ torsion?}

\begin{thm}
Let $C$ be a reduced complex projective curve. If $C$ is irreducible and $\rho$ is abelian, or, if $C$ is transversal to the line at infinity $L$, then the twisted Alexander modules $\TA_i(\mathbb{P}^2\setminus C\cup L,\Ft)$ are torsion, for $i=0,1$.
\end{thm}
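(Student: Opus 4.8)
The plan is to dispatch $i=0$ once and for all and then treat $i=1$ in the two stated regimes. For $i=0$: by construction $\varepsilon\colon\pi_1(U)\to\Z$ is an epimorphism, hence non-trivial, so $\TA_0(U,\Ft)$ is torsion by Proposition 2.1, with no further hypothesis needed. The content is in $i=1$.

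For $i=1$ with $C$ irreducible and $\rho$ abelian, I would argue as follows. Irreducibility gives $H_1(U;\Z)\cong\Z$, so $\varepsilon$ is, up to sign, the abelianization and $\pi':=\ker\varepsilon$ is the commutator subgroup; since $\rho$ is abelian it is trivial on $\pi'$. Feeding this into the identification $\TC_*(U,\Ft)\cong C_*(U_\infty;\mathbb{V}_\rho)=\mathbb{V}\otimes_{\mathbb{F}[\pi']}C_*(\tilde{U})$ recalled in Section 2, the triviality of the $\pi'$-action collapses the tensor product to $\mathbb{V}\otimes_{\mathbb{F}}C_*(U_\infty;\mathbb{F})$, whence $\TA_i(U,\Ft)\cong\mathbb{V}\otimes_{\mathbb{F}}H_i(U_\infty;\mathbb{F})$ as $\mathbb{F}$-vector spaces. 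Now $H_1(U_\infty;\mathbb{F})$ is just the ordinary Alexander module $H_1(U;\mathbb{F}[t^{\pm1}])$, which is torsion over $\mathbb{F}[t^{\pm1}]$ by Theorem 3.1 (stated there over $\Q$, but transported to $\mathbb{F}$ by flat base change along $\Qt\hookrightarrow\mathbb{F}[t^{\pm1}]$); as $U$ is a finite complex this module is finitely generated, hence finite dimensional over $\mathbb{F}$. Therefore $\TA_1(U,\Ft)$ is finite dimensional over $\mathbb{F}$, and any finite dimensional $\mathbb{F}$-vector space carrying a $\Ft$-module structure is automatically $\Ft$-torsion (multiplication by $t$ satisfies its characteristic polynomial, a non-zero element of $\Ft$), so $\TA_1(U,\Ft)$ is torsion; the same reasoning recovers $i=0$ in this case.

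For $i=1$ with $C$ transversal to $L$ --- now $C$ may be reducible and $\rho$ is arbitrary --- the idea is to exploit a central element. Transversality to $L$ means a meridian $\gamma_L$ of $L$ generates a central subgroup of $\pi_1(U)$ (the classical centrality statement for complements of curves transversal to the line at infinity, cf.\ \cite{DB}), and $\varepsilon(\gamma_L)\neq0$, since $\gamma_L=-\sum_i d_i\nu_i$ in $H_1(U)$ and therefore $\varepsilon(\gamma_L)=-\sum_i d_in_i$. I would then compare two descriptions of the operator ``act by $\gamma_L$'' on $\TA_\bullet(U,\Ft)$. On one hand, left multiplication by $\gamma_L$ on $C_*(\tilde{U};\mathbb{F})$ is $\mathbb{F}[\pi_1(U)]$-linear by centrality, and it induces the identity on $H_0(\tilde{U})=\mathbb{F}$ and on $H_1(\tilde{U})=0$; building a chain homotopy degree by degree, using freeness of the $C_i(\tilde{U};\mathbb{F})$ together with $\ker\partial_1=\operatorname{im}\partial_2$, produces an $\mathbb{F}[\pi_1(U)]$-equivariant homotopy to the identity \emph{in degrees $\le1$}. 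It need not extend to degree $2$, where the obstruction lies in $H_2(\tilde{U})\cong\pi_2(U)$, generically non-zero --- this is exactly why the conclusion is confined to $i\le1$. Tensoring this partial homotopy up, the $\gamma_L$-operator is the identity on $\TA_0(U,\Ft)$ and $\TA_1(U,\Ft)$. On the other hand, unwinding the coefficient module shows this same operator equals $t^{\varepsilon(\gamma_L)}$ composed with the $\Ft$-linear automorphism $R$ that applies $\rho(\gamma_L)$ to the $\mathbb{V}$-factor (well defined precisely because $\rho(\gamma_L)$ is central in the image of $\rho$). Comparing, $R$ acts as multiplication by $t^{-\varepsilon(\gamma_L)}$ on $\TA_0$ and $\TA_1$; since $R$ is killed by the characteristic polynomial $\chi$ of $\rho(\gamma_L)$ (Cayley--Hamilton on the $\mathbb{V}$-factor), the Laurent polynomial $\chi(t^{-\varepsilon(\gamma_L)})\in\Ft$ --- non-zero because $\varepsilon(\gamma_L)\neq0$ and $\chi(0)=(-1)^{\dim\mathbb{V}}\det\rho(\gamma_L)\neq0$ --- annihilates $\TA_0$ and $\TA_1$, so they are torsion.

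I expect the real obstacle to be in the transversal case: pinning down the centrality of $\gamma_L$ (this is where transversality to $L$ is genuinely used, and one must verify that no hypothesis on the singularities of $C$ is secretly required), and checking carefully that the comparison between the geometric operator $\gamma_L\cdot$ and the algebraic operator $t^{\varepsilon(\gamma_L)}R$ already holds on $H_0$ and $H_1$, i.e.\ that the partial chain homotopy can be chosen $\mathbb{F}[\pi_1(U)]$-equivariantly in degrees $0$ and $1$. By contrast the irreducible--abelian case is essentially formal once Theorem 3.1 is granted, the only delicate point being its transport from $\Q$ to an arbitrary coefficient field $\mathbb{F}$, which is harmless in characteristic zero.
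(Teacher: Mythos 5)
Your proposal is correct (modulo the characteristic-zero caveat you yourself flag), and for the transversal case it is a genuinely different argument from the paper's, so a comparison is warranted.

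For $i=0$ you and the paper do exactly the same thing (Proposition 2.1). In the irreducible--abelian case the paper cites \cite{NV} and says the argument is then identical to the classical one via the Milnor long exact sequence; you instead observe that $\rho$ is trivial on $\pi'=[\pi_1(U),\pi_1(U)]=\ker\varepsilon$ and collapse the coefficients to $\mathbb{V}\otimes_{\mathbb{F}}C_*(U_{\infty};\mathbb{F})$, reducing to the finite-dimensionality of $H_1(U_{\infty};\mathbb{F})$. This is the same idea seen from the chain level. Your transport of Theorem~3.1 from $\Q$ to $\mathbb{F}$ really does need $\operatorname{char}\mathbb{F}=0$: $H_1(U_{\infty};\Z)$ is finitely generated over $\Z[t^{\pm1}]$ but need not be finitely generated over $\Z$, so $p$-torsion could make $H_1(U_{\infty};\mathbb{F}_p)$ infinite-dimensional. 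You acknowledge this; the paper sidesteps it by rerunning the Milnor sequence over $\mathbb{F}$ rather than base-changing.

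The transversal case is where you genuinely diverge. The paper goes through the link at infinity: it uses the surjection $\pi_1(S^3\setminus K)\twoheadrightarrow\pi_1(U)$ (where $K$ is the generalized Hopf link), the fact that $U$ is homotopy equivalent to $S^3\setminus K$ with $2$-cells attached, and then applies its Theorem~2.8, which is a Fox-calculus and Euler-characteristic computation showing $\TA_0,\TA_1$ of the Hopf link exterior are torsion whenever $\varepsilon(x_0)\neq 0$. Your route replaces all of that with the centrality of the loop at infinity (itself a consequence of the same surjection from $\Z\times F_{d-1}$, so the starting geometric input is the same, but you only need the formal statement of centrality), an $\mathbb{F}[\pi_1]$-equivariant chain homotopy between $\gamma_L\cdot$ and $\operatorname{id}$ in degrees $\le 1$ using $H_0(\tilde U)=\mathbb{F}$, $H_1(\tilde U)=0$, and a Cayley--Hamilton argument: the operator $R$ applying $\rho(\gamma_L)$ on the $\mathbb{V}$ factor is well defined by centrality, equals $t^{-\varepsilon(\gamma_L)}$ on $\TA_0,\TA_1$, and is annihilated by the characteristic polynomial $\chi$ of $\rho(\gamma_L)$, so the nonzero Laurent polynomial $\chi(t^{-\varepsilon(\gamma_L)})$ kills $\TA_0,\TA_1$. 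I checked the details: the homotopy construction in degrees $\le 1$ goes through equivariantly because $C_*(\tilde U;\mathbb{F})$ is $\mathbb{F}[\pi_1]$-free and $\gamma_L\cdot$ and $\operatorname{id}$ are both $\mathbb{F}[\pi_1]$-linear; tensoring preserves the low-degree homotopy relation; the obstruction to extending to degree $2$ lies in $H_2(\tilde U)\cong\pi_2(U)$ exactly as you say; and $\chi(t^{-\varepsilon(\gamma_L)})\neq 0$ since $\varepsilon(\gamma_L)=-\sum_l d_l n_l\neq 0$ and $\chi(0)\neq 0$. What each approach buys: yours is conceptually cleaner and isolates the precise mechanism (a central element with nonzero $\varepsilon$-weight forces torsion in low degree), while the paper's computational route additionally produces the explicit divisor of $\Delta_1^{\varepsilon,\rho}$ recorded in Corollary~3.6, which is needed later for the obstruction results. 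Your annihilator $\chi(t^{-\varepsilon(\gamma_L)})=\det(t^{\sum d_ln_l}\mathrm{Id}-\rho(\gamma_L))$ is, up to units, the factor $\det(\rho(x_0)t^{\sum d_ln_l}-\mathrm{Id})$ appearing there, so the two results are consistent, but your method by itself does not yield the $(d-2)$ exponent nor the $\gcd$ factor.
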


\begin{proof}
If $C$ is irreducible and $\rho$ is abelian, \cite{NV} shows that the classical Alexander modules of an irreducible curve determine the twisted ones, and the proof in this case is identical to the proof of the classical case shown in \cite{AC} using the Milnor long exact sequence.

Now assume $L$ is transversal to $C$. Let $C$ have degree $d$. Then by \cite{AC}, the link at infinity is given by the equation $x^d=y^d$, which is exactly the generalized Hopf link described before.

By lemma 5.2 in \cite{AC}, we have that $$\pi_1(S^3\setminus K)\cong <x_0,x_1,...,x_d| x_dx_{d-1}\cdots x_1 x_0^{-1}, x_0x_ix_0^{-1}x_i^{-1}, i=1,...,d> \rightarrow \pi_1(U)$$ is surjective. Moreover, from section 7 in \cite{AC}, $\pi_1(U)$ and $\pi_1(S^3\setminus K)$ have the same generators. Relations in $\pi_1(U)$ are relations in $\pi_1(S^3\setminus K)$ in addition to those being described by monodromy about exceptional lines using Zariski-Van Kampen method. Therefore, $\varepsilon \circ i_*= \varepsilon$ and $\rho\circ i_* = \rho$ on generators, where $i_*$ are the induced maps of inclusion on the fundamental groups.

Up to homotopy type, $U$ is 2-dimensional, so $U$ has the homotopy type of $S^3\setminus K$ with some 2-cells attached. Hence $$\TA_i(S^3\setminus K,\Ft) \rightarrow \TA_i(U,\Ft)$$ is an isomorphism for $i=0$, and an epimorphism if $i=1$. As a result, by theorem 2.8, it is sufficient to show that $\varepsilon\circ  i_* (x_0) =\varepsilon(x_0)\neq 0$. Notice that we have a commutative diagram:

$$\xymatrix{
\pi_1(S^3\setminus K)\ar[d]^{ab} \ar[r]^{i_*} & \pi_1(U)\ar[r]^{\varepsilon} \ar[d]^{ab} & \mathbb{Z}
\\
H_1(S^3\setminus K) \ar[r]^{j_*} & H_1(U)   \ar[ru]^{\psi}  }$$
 
So, $\varepsilon\circ i_* = \psi\circ j_*\circ ab$ and it is enough to understand the maps $ab$ and $j_*$.
 
Recall that $W$ is the complement of a central line arrangement $\A$ of $d$ lines in $\C^2$. $H_1(W) = \Z^d = <\nu_1,...,\nu_d>$, where $\nu_i$ is the homology class of the meridians around the line $l_i \subset \A$. Hence, by remark 2.6, we have $ab(x_i) = \nu_i$ for $i=1,...,d$. Therefore, $ab(x_0) = \nu_1+...+\nu_d$.

On the other hand, $H_1(U) = \Z^r$, generated by the homology classes $\beta_l$ of the meridians around each irreducible component. Since $\A$ is defined by the homogeneous part of the defining equation of $C$, it is clear that $j_*$ takes each $\nu_i$ to one of the $\beta_l$'s. We know that there are exactly $d_l$ many $\nu_i$ mapping to $\beta_l$, where $d_l$ is the degree of the component $C_l$ of $C$. 

Finally, since $\psi(\beta_l) =n_l$, then for all $i\geq 1$, $\varepsilon\circ i_*(x_i) =n_{l_i}$ for some $l_i$, and $$\varepsilon\circ i_*(x_0) = \psi\circ j_* (\nu_1+...+\nu_d) =\sum_{l=1}^r d_ln_l> 0.$$

\end{proof}

\begin{remark}
For the space $W = \C^2\setminus\A $, where $\A$ is a central line arrangement of $d$ lines, $H_1^{\rho}(W,\Ft)$ is torsion over $\Ft$ also because $$H_1^{\rho}(W,\Ft) \cong H_1(W_{\infty},\mathbb{V}_{\rho}).$$ $W_{\infty}$ is homotopy equivalent to the Milnor fiber at the origin of the polynomial $x^d-y^d=0$. $W_{\infty}$ has the homotopy type of a finite bouquet of circles (\cite{SP}). Therefore, $H_2^{\rho}(W,\Ft) =0$ and $H_1^{\rho}(W,\Ft)$ is a finite $\mathbb{F}$-vector space, and hence a torsion $\Ft$-module. Similar arguments will be given later to show that some other Alexander modules are torsion.
\end{remark}

Even though the Milnor fiber argument is a short proof for torsionness, it does not give following obstructions for $\Delta_{1,U}^{\varepsilon,\rho}$, which follows from theorem 2.8 and theorem 3.4.

\begin{cor}
If $C$ is in general position at infinity, then $\Delta_{1,U}^{\varepsilon,\rho}$ divides $$\gcd(\det(\rho(x_0)t^{\sum_{l=1}^r d_ln_l}-Id), \det(\rho(x_1)t^{n_{l_1}}-Id),...,\det(\rho(x_{d-1})t^{n_{l_{d-1}}}-Id))\cdot(\det(\rho(x_0)t^{\sum_{l=1}^r d_ln_l}-Id))^{d-2}.$$ 

In particular, if $\varepsilon = lk\#$, then $\Delta_{1,U}^{\rho}$ divides $$\gcd(\det(\rho(x_0)t-Id),...,\det(\rho(x_{d-1})t-Id))\cdot(\det(\rho(x_0)t^d-Id))^{d-2}.$$
\end{cor}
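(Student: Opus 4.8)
The plan is to combine the two most recent results in the excerpt: Theorem 2.8, which gives the divisibility $\Delta_1^{\varepsilon,\rho}/\Delta_0^{\varepsilon,\rho} = (\det(\rho(x_0)t^{\varepsilon(x_0)}-Id))^{d-2}$ together with the description of $\Delta_0^{\varepsilon,\rho}$ as a gcd of minors for the generalized Hopf link $S^3\setminus K$, and Theorem 3.4, which provides the surjection in homology $\TA_i(S^3\setminus K,\Ft)\to\TA_i(U,\Ft)$ (an isomorphism for $i=0$, an epimorphism for $i=1$) induced by the inclusion of the link at infinity, after identifying the relevant $\varepsilon$ and $\rho$ on generators. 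The key computation feeding into this is already carried out inside the proof of Theorem 3.4: the abelianization sends $x_i\mapsto\nu_i$ for $i\geq 1$ and $x_0\mapsto\nu_1+\cdots+\nu_d$, and $j_*$ collapses the $d$ meridians $\nu_i$ of the central arrangement onto the $r$ meridians $\beta_l$ with multiplicity $d_l$, so that $\varepsilon\circ i_*(x_i)=n_{l_i}$ for $i\geq 1$ and $\varepsilon\circ i_*(x_0)=\sum_{l=1}^r d_l n_l$.

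First I would invoke Theorem 3.4 to get the epimorphism $\TA_1(S^3\setminus K,\Ft)\twoheadrightarrow\TA_1(U,\Ft)$, which is a map of torsion $\Ft$-modules (torsionness of the target is exactly Theorem 3.4, of the source is Theorem 2.8). An epimorphism of finitely generated torsion modules over the PID $\Ft$ forces the order of the target to divide the order of the source, so $\Delta_{1,U}^{\varepsilon,\rho}$ divides $\Delta_{1,S^3\setminus K}^{\varepsilon\circ i_*,\rho\circ i_*}$. Then I would substitute the value of the latter: by Theorem 2.8 it equals $\Delta_0^{\varepsilon\circ i_*,\rho\circ i_*}\cdot(\det(\rho(x_0)t^{\varepsilon\circ i_*(x_0)}-Id))^{d-2}$, and $\Delta_0$ is the gcd of the $r\times r$ minors of the column matrix with blocks $\rho(x_i)t^{\varepsilon\circ i_*(x_i)}-Id$. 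Substituting the linking-number values $\varepsilon\circ i_*(x_0)=\sum_l d_l n_l$ and $\varepsilon\circ i_*(x_i)=n_{l_i}$ gives the stated expression, once one observes that the gcd of all $r\times r$ minors of a column of square $r\times r$ blocks $M_0,\dots,M_{d-1}$ divides each $\det M_i$, hence divides $\gcd_i(\det M_i)$ — so I may replace $\Delta_0$ by this (larger, hence still valid as a divisor) gcd of determinants in the final bound. The specialization to $\varepsilon=lk\#$ is then just setting all $n_l=1$, which turns $\sum_l d_l n_l$ into $d$ and each $n_{l_i}$ into $1$.

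The one point requiring a little care, rather than any real obstacle, is the bookkeeping of indices: the generalized Hopf link presentation in Theorem 2.8 uses generators $x_0,\dots,x_{d-1}$, whereas Theorem 3.4 works with the presentation on $x_0,\dots,x_d$ with the extra relation $x_d x_{d-1}\cdots x_1 x_0^{-1}$; one must note that eliminating $x_0$ (or, symmetrically, $x_d$) identifies the two presentations compatibly with $\varepsilon$ and $\rho$, so that the formulas of Theorem 2.8 apply verbatim with $\varepsilon(x_i)$ read off as $n_{l_i}$. I would also remark explicitly that passing from the gcd of all maximal minors to $\gcd_i(\det M_i)$ only weakens the conclusion, which is acceptable since the corollary only claims divisibility, not equality. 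No acyclicity hypothesis is needed here because torsionness of both modules is already established unconditionally under the general-position assumption.
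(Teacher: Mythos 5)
Your argument is exactly the one the paper intends: the text immediately preceding the corollary states that it ``follows from theorem 2.8 and theorem 3.4,'' and your chain (epimorphism $\TA_1(S^3\setminus K)\twoheadrightarrow\TA_1(U)$ of torsion $\Ft$-modules from Theorem~3.4, then the $\Delta_0$ and $\Delta_1/\Delta_0$ formulas from Theorem~2.8, then the observation that the gcd of all $r\times r$ minors of the stacked column divides $\gcd_i\det M_i$) is precisely how the two results combine. Your remark on reconciling the $d$-generator and $(d+1)$-generator presentations is a legitimate bookkeeping point that the paper leaves implicit but which your handling resolves correctly.
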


\begin{remark}
This corollary generalizes the formula in the classical case. Namely, when $\varepsilon=lk\#$ and $\rho$ is trivial, then we have the formula $(t-1)(t^d-1)^{d-2}$ in \cite{AC} for the Alexander polynomial at infinity.
\end{remark}

\begin{remark}
Corollary 3.6 is very similar to theorem 3.1 in \cite{TS}. But their definition of twisted Alexander polynomials follows \cite{WD}. The two definitions are related in theorem 4.1 in \cite{KL}, provided that the first twisted Alexander module of a space $X$ is torsion. 

\end{remark}

\begin{remark}
In \cite{NV}, Libgober proved that for an irreducible curve $C$, and for $\rho$ a unitary representation, the roots of the first twisted Alexander polynomial are in a cyclotomic extension of the field generated by the rationals and the eigenvalues of $\rho(\gamma)$, where $\gamma$ is a meridian around a non-singular point in $C$. But his result does not give the extension degree. By corollary 3.6, we actually know what the field extension is and therefore the degree in some cases.
\end{remark}

\begin{cor}
Suppose $\mathbb{F} = \C$. Denote the eigenvalues of $\rho(x_0)^{-1}$ by $\lambda_1,...,\lambda_r$. Then the roots of $\Delta_{1,U}^{\rho}$ lie in the splitting field $S$ of $\prod_{i=1}^r (t^d-\lambda_i)$ over $\Q$, which is cyclotomic over $K=\Q(\lambda_1,...,\lambda_r)$.
\end{cor}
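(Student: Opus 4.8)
The plan is to derive both assertions from Corollary 3.6, working under its hypotheses (so $C$ is in general position at infinity) and with $\varepsilon=lk\#$, as the notation $\Delta_{1,U}^{\rho}$ indicates. The first assertion will then be immediate. By Corollary 3.6, $\Delta_{1,U}^{\rho}$ divides
$$\gcd(\det(\rho(x_0)t-Id),\ldots,\det(\rho(x_{d-1})t-Id))\cdot\big(\det(\rho(x_0)t^d-Id)\big)^{d-2},$$
so every root of $\Delta_{1,U}^{\rho}$ is a root of the gcd or, when $d\ge 3$, a root of $\det(\rho(x_0)t^d-Id)$. The gcd divides $\det(\rho(x_0)t-Id)$, whose roots are the nonzero $t$ with $t^{-1}$ an eigenvalue of $\rho(x_0)$, i.e. the eigenvalues $\lambda_1,\ldots,\lambda_r$ of $\rho(x_0)^{-1}$; and $\det(\rho(x_0)t^d-Id)=0$ exactly when $t^d$ is an eigenvalue of $\rho(x_0)^{-1}$, i.e. when $\prod_{i=1}^r(t^d-\lambda_i)=0$. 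Since each $\lambda_i=(\lambda_i^{1/d})^d$ lies in the splitting field $S$ of $\prod_i(t^d-\lambda_i)$ over $\Q$, both kinds of root lie in $S$, which is the first assertion.

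For the second assertion I would unwind $S$ explicitly. Fixing a $d$-th root $\mu_i$ of each $\lambda_i$ and a primitive $d$-th root of unity $\zeta_d$, the splitting field of $t^d-\lambda_i$ over $K=\Q(\lambda_1,\ldots,\lambda_r)$ is $K(\zeta_d,\mu_i)$, so $S=K(\zeta_d,\mu_1,\ldots,\mu_r)$, a priori only a Kummer-type extension of $K$. To see that it is in fact cyclotomic over $K$ one uses that the $\lambda_i$, being eigenvalues of $\rho(x_0)^{-1}$, are roots of unity; granting this, every $\mu_i$ and $\zeta_d$ is a root of unity, so $S$ is generated over $\Q$ by finitely many roots of unity. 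The finite subgroup of $\C^{\times}$ these generate is cyclic, say generated by $\zeta_N$, so $S=\Q(\zeta_N)$; and since $K\subseteq S=\Q(\zeta_N)$ we obtain $S=K(\zeta_N)$, a cyclotomic extension of $K$.

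The hard part is exactly this last point. The inclusion of the roots in $S$ is purely formal from Corollary 3.6, but the claim that $S$ is cyclotomic over $K$ fails for an arbitrary complex representation --- if, say, $\rho(x_0)^{-1}$ has an eigenvalue $\lambda$ which is not a root of unity, then $S\supseteq\Q(\zeta_d,\lambda^{1/d})$, which is non-abelian over $\Q$ for $d\ge 3$ --- so one must invoke whatever hypothesis is in force (such as $\rho$ having finite image, the setting underlying the unitary results of \cite{NV}) that forces the eigenvalues of $\rho(x_0)^{-1}$ to be roots of unity. Once that is granted, the remainder is elementary Galois theory of cyclotomic fields.
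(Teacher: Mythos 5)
Your proof is correct and rests, like the paper's, on the divisibility statement of Corollary 3.6, but your treatment of the two factors is genuinely different and in one place more robust than the paper's own argument. The paper splits into cases according to whether $\rho(x_1),\dots,\rho(x_d)$ share an eigenvalue: if not, the gcd is a unit and everything reduces to $\det(\rho(x_0)t^d-Id)$; if so, it asserts that a common eigenvalue $\alpha$ of the $\rho(x_i)$ forces $\alpha^d$ to be an eigenvalue of the product $\rho(x_0)=\rho(x_d)\cdots\rho(x_1)$ --- a step that fails for non-commuting matrices in general (two matrices can share an eigenvalue without their product having the corresponding power as an eigenvalue), and the meridians need not commute under $\rho$. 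You avoid this entirely by noting that the gcd in Corollary 3.6 already contains the factor $\det(\rho(x_0)t-Id)$, whose roots are exactly the $\lambda_i\in K\subseteq S$; this is cleaner and sidesteps the problematic case. You also do something the paper's proof does not: you address the claim that $S$ is cyclotomic over $K$, and you correctly identify that it requires the eigenvalues of $\rho(x_0)$ to be roots of unity (e.g.\ $\rho$ unitary of finite order, the setting of Remark 3.9); your observation that the claim fails otherwise is consistent with Example 3.11(1), where the extension degree $d^r\varphi(d)$ is that of a Kummer-type rather than a cyclotomic extension. So the final clause of the corollary should be read as carrying that implicit hypothesis, exactly as you flag.
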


\begin{proof}
If $\rho(x_1),...,\rho(x_d)$ have no common eigenvalues, then $\Delta_{1,U}^{\rho}$ divides $(\det(\rho(x_0)t^d-Id))^{d-2}$. In particular, prime factors of $\Delta_{1,U}^{\rho}$ are among prime factors of $\det(\rho(x_0)t^d-Id)$.

Let $p(t)$ be the characteristic polynomial of $\rho(x_0)^{-1}$.
Then it is clear that $$\det(\rho(x_0)t^d-Id) = (-1)^r\det(\rho(x_0))p(t^d)=(-1)^r\det(\rho(x_0))(t^d-\lambda_1)\cdots (t^d-\lambda_r).$$
Then the roots of $\Delta_{1,U}^{\rho}$ are inside the splitting field $S$ of $\prod_{i=1}^r (t^d-\lambda_i)$ over $\Q$.

If $\alpha$ is a common eigenvalue of $\rho(x_1),...,\rho(x_d)$, then one of the eigenvalues of $\rho(x_0)=\rho(x_d)\rho(x_{d-1})...\rho(x_1)$ is $\alpha^d$. And hence WLOG, $\alpha^d=\lambda_1^{-1}$. So $\alpha\in S$.
\end{proof}

\begin{example}
In some cases, the degree $[S:K]$ is easy to compute.
\begin{enumerate}
\item If all eigenvalues of $\rho$ are transcendental and they have no algebraic relations, then the degree of extension is $d^r\varphi(d)$, where $\varphi$ is the Euler function.
\item If $\rho$ is unitary and all eigenvalues of $\rho$ are algebraic (e.g. $\rho$ only takes values in matrices with algebraic elements), and write $\lambda_i=e^{2\pi i/k_i}$, then $$[S:K] = \frac{\varphi(lcm(d,k_1,...,k_r))}{\varphi(lcm(k_1,...,k_r))}.$$
\item If $\rho$ is unitary and there are $m$ transcendental elements in which there are no relations, by writing the algebraic $\lambda_i=e^{2\pi i/k_i}$, we have $$[S:K] = \frac{d^m\varphi(lcm(d,k_i's))}{\varphi(lcm(k_i's))}.$$
\end{enumerate}

\end{example}

\subsection{Local twisted Alexander modules}

In this section we consider the local Alexander modules of singular points of a complex curve. These are defined to be the twisted Alexander modules of the link exterior at each singular point (see \cite{TC}). There are two types of singular points according to their positions; interior singular points and singularities at infinity (intersections of $C$ and $L$). The main result of this section will be the implication of torsionness of the global twisted Alexander modules from those of the local ones at infinity.

If $x$ is an interior singular point of a curve $C$, consider a small sphere $S_x^3$ around $x$. $L_x:=S_x^3\cap C$ is called the algebraic link at $x$. Diffeomorphism type of $L_x$ does not change when $S_x^3$ is small enough (\cite{DB},\cite{SP}). Twisted Alexander modules at $x$ are defined as $\TA_i(S_x^3\setminus L_x,\Ft)$ via the induced map from inclusion on $\pi_1$. 

For all points x in C, there is a locally trivial fiber bundle, called the Milnor fibration (\cite{SP}), $$F_x \rightarrow S_x^3\setminus L_x \xrightarrow{h/|h|} S^1,$$ where $h$ is the defining equation of the germ $(C,x)$ and $F_x$ is the Milnor fiber at $x$. $F_x$ has the homotopy type of a finite bouquet of circles and the number of circles is the Milnor number of $f$ at $x$.

If $x$ is a singular point at infinity, then we consider a small sphere $S_x^3$ centered at $x$ in $\mathbb{CP}^2$ and $L_x = S_x^3\setminus (C\cup H)$. The twisted Alexander modules at $x$ are again defined through the inclusion maps on the fundamental group.

\begin{prop}
Local twisted Alexander modules at interior singular points are torsion $\Ft$-modules.
\end{prop}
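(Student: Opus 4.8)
The plan is to identify the local twisted Alexander modules at an interior singular point $x$ with the $\rho$-twisted homology groups of the Milnor fiber $F_x$ of the local equation $h$, on which $t$ acts through the monodromy; since $F_x$ is homotopy equivalent to a finite bouquet of circles, these groups are finite-dimensional over $\mathbb{F}$, and a finite-dimensional $\mathbb{F}$-vector space carrying an $\Ft$-module structure is automatically a torsion $\Ft$-module (it is annihilated by the characteristic polynomial of the $t$-action, a nonzero element of $\Ft$). This is the same mechanism used for $W = \C^2\setminus\A$ in the remark following the proof of Theorem~3.4.

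Concretely, write $X = S^3_x\setminus L_x$ and let $i_*\colon\pi_1(X)\to\pi_1(U)$ be the inclusion-induced homomorphism, so that the local modules are $\TA_*(X,\Ft)$ for the pulled-back data $\varepsilon\circ i_*$, $\rho\circ i_*$. The first and crucial step is to check that $\varepsilon\circ i_*$ agrees with the homomorphism $\phi\colon\pi_1(X)\to\Z$ induced by the Milnor fibration $h/|h|\colon X\to S^1$. Both factor through $H_1(X)$, which is freely generated by the meridians of the branches of $L_x$; on the meridian of a branch lying on the component $C_l$ of $C$, the map $\phi$ returns the multiplicity of $C_l$ in $h$ while $\varepsilon\circ i_*$ returns $n_l$, and these coincide exactly when the weights $n_l$ are the multiplicities occurring in the defining polynomial --- in particular when $C$ is reduced and $\varepsilon = lk\#$, where both equal $1$. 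Granting $\varepsilon\circ i_* = \phi$ (or merely a positive multiple of it), the infinite cyclic cover $X_\infty$ entering the definition of $\TC_*(X,\Ft)$ is the cover of $X$ associated to $\ker\phi = \pi_1(F_x)$; pulling the Milnor fibration back along the universal cover $\R\to S^1$ realizes $X_\infty$ as a fibration over $\R$ with fiber $F_x$, so $X_\infty$ deformation retracts onto $F_x$. By the infinite-cyclic-cover description of the twisted chain complex recalled in Section~2, one obtains an isomorphism of $\Ft$-modules $$\TA_i(X,\Ft)\;\cong\;H_i(F_x;\mathbb{V}_\rho),$$ with $t$ acting as the $\rho$-twisted geometric monodromy. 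As $F_x\simeq\vee^\mu S^1$ ($\mu$ the local Milnor number), the right-hand side vanishes for $i\geq 2$ and is a finite-dimensional $\mathbb{F}$-vector space for $i=0,1$; hence it is a torsion $\Ft$-module, and the proposition follows.

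The genuine obstacle is the identification $\varepsilon\circ i_* = \phi$: it is exactly what forces $X_\infty$ to be the Milnor fiber itself rather than an infinite-type cover of it, and it can fail only if one allows weights $n_l$ unrelated to the multiplicities in the defining polynomial (it does not fail for any $\varepsilon$ used in this paper). In that exotic situation I would instead invoke the mapping-torus (Wang) exact sequence: $\TC_*(X,\Ft)$ is quasi-isomorphic to the mapping cone of $Id - t\,h_*$ acting on $\TC_*(F_x,\Ft)$, a complex whose homology is finitely generated over $\Ft$; one then needs $Id - t\,h_*$ to act injectively over $\mathbb{F}(t)$ on this homology, which kills $\TA_2(X,\Ft)$ and makes the remaining modules, now cokernels of $Id - t\,h_*$, torsion. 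This injectivity is clear when the twisted monodromy matrix is independent of $t$ --- the determinant of $Id - tM$ then has constant term $1$ --- and in general reduces, after specializing $t$, to a genericity statement of the kind proved elsewhere in this paper.
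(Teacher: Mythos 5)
Your core mechanism --- identify the local twisted Alexander modules with the $\rho$-twisted homology of the Milnor fiber $F_x$, use that $F_x$ is a finite bouquet of circles to get finite-dimensionality over $\mathbb{F}$, and conclude torsionness --- is the same one the paper uses, and it does work in the case the paper carries out in detail, namely when the singular point $x$ lies on a single component $C_i$ (there $\varepsilon\circ i_*$ is $n_i\phi$, and the cover associated to $\varepsilon\circ i_*$ is $n_i$ disjoint copies of $F_x$, so the displayed isomorphism $\TA_i(X,\Ft)\cong H_i(F_x;\mathbb{V}_\rho)$ should really read $\cong \bigoplus^{n_i}H_i(F_x;\mathbb{V}_\rho)$, but the conclusion is the same). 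However, your parenthetical claim that the failure of $\varepsilon\circ i_* = c\phi$ ``does not fail for any $\varepsilon$ used in this paper'' is wrong, and this is a genuine gap. The paper explicitly allows weights $n_1,\dots,n_r$ not all equal to $1$, and the proposition is stated for all such $\varepsilon$. When $x$ lies on two components $C_1,C_2$ with $n_1\neq n_2$, the meridians of branches on $C_j$ are sent to $n_j$ by $\varepsilon\circ i_*$ but to $1$ by $\phi$ (since $C$ is reduced), so $\varepsilon\circ i_*$ is not a positive multiple of $\phi$; consequently $\ker(\varepsilon\circ i_*)\supsetneq\ker\phi$, and the cover entering $\TC_*(X,\Ft)$ is neither $F_x$ nor a finite disjoint union of copies of it. This is exactly the second case the paper treats separately (``If $x$ is a singular point which lies on more than one component\ldots''), and your proof must do so as well.

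Your fallback Wang-sequence argument does not close this gap. In that sequence the terms $H_*(F_x;\cdot)$ carry coefficients in the restriction to $\pi_1(F_x)=\ker\phi$ of the local system $\Ft\otimes\mathbb{V}$. The $\Ft$-part of that restriction is governed by $\varepsilon\circ i_*|_{\ker\phi}$, which is trivial if and only if $\varepsilon\circ i_*$ is proportional to $\phi$ (if it vanished on $\ker\phi$ it would factor through $\pi_1(X)/\ker\phi\cong\Z$ and hence be a multiple of $\phi$). So precisely in the exotic case the term is a nontrivial twisted Alexander module of $F_x$ over $\Ft$, not the free module $\Ft\otimes H_*(F_x;\mathbb{V}_\rho)$, and the injectivity of $Id - t\,h_*$ over $\mathbb{F}(t)$ you invoke is no longer the formal statement $\det(I-tM)\neq 0$; it is essentially the torsionness you set out to prove, so deferring it to ``a genericity statement of the kind proved elsewhere'' is circular here. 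What actually closes the gap is a fiberedness theorem for multilinks: assign multiplicity $n_l$ to each branch of $L_x$ lying on $C_l$; since all multiplicities are positive and $L_x$ is an algebraic (iterated torus) link, the resulting multilink is fibered (Eisenbud--Neumann), so the cover of $S^3_x\setminus L_x$ associated to $\varepsilon\circ i_*$ still has the homotopy type of a finite CW-complex, and the finite-dimensionality argument then runs as before.
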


\begin{proof}
Suppose $x$ is a singular point located in exact one component $C_i$. Let $E=S_x^3\setminus L_x$. The composition $\varepsilon\circ i_*:\pi_1(E) \rightarrow \Z$ takes generators to $n_i$. 

We proceed by considering two `infinite cyclic covers' of $E$.  Let $E_{n_i,\infty}$ denote the infinite cyclic cover of $E$ associated to $\varepsilon\circ i_*$.  On the other hand, the map $\pi_1(E) \rightarrow \pi_1(U) \xrightarrow{\nu_i \rightarrow 1} \Z$ defines $E_{\infty}$ another cover of $E$. By \cite{KL},$E_{n_i,\infty}$ is the disjoint union of $n_i$ copies of $E_{\infty}$. Moreover, it is well known that $E_{\infty}$ is homotopic equivalent to $F_x$.

Therefore, $\TA_i(E,\Ft) \cong H_i(E_{n_i,\infty},\mathbb{V}_{\rho}) \cong H_i(\cup^{n_i}F_x,\mathbb{V}_{\rho})$. The latter are finite vector spaces, hence torsion $\Ft$ modules. Moreover, $\TA_2(E,\Ft)=0$.

If $x$ is a singular point which lies on more than one component, say $C_1$ and $C_2$, the image of $\varepsilon\circ i_*$ is $(n_1,n_2)\Z$. A similar argument as above shows that $\TA_i(E,\Ft) \cong  H_i(\cup^{(n_1,n_2)}F_x,\mathbb{V}_{\rho})$. 

\end{proof}

\begin{remark}
The relation between the twisted Alexander polynomials of $H_1(E_{\infty},\mathbb{V}_{\rho})$ and $H_1(E_{n_i,\infty},\mathbb{V}_{\rho})$ is stated in \cite{KL}. One only needs to replace $t$ by $t^{n_i}$ in the polynomial.
\end{remark}

\begin{thm}
If there exists a component of $C$ such that its local twisted Alexander modules at infinity are torsion, then the first (global) twisted Alexander module of the curve $C$ is torsion.
\end{thm}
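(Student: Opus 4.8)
The plan is to reduce the statement to the link at infinity, in the spirit of the proof of Theorem~3.4. Write $U\simeq\C^2\setminus C'$ with $C'$ the affine part of $C$, and for $R\gg 0$ let $M_\infty:=S^3_R\setminus C'$ be the exterior of the link at infinity $K_\infty:=C'\cap S^3_R$. The inclusion induces a surjection $\pi_1(M_\infty)\twoheadrightarrow\pi_1(U)$, and, $U$ having the homotopy type of a $2$-complex, $U$ is obtained from $M_\infty$ by attaching cells of dimension $\geq 2$; this holds for an arbitrary $C$, not only one transversal at infinity (cf.\ \cite{AC} and the proof of Theorem~3.4). Hence $\TA_1(M_\infty,\Ft)\twoheadrightarrow\TA_1(U,\Ft)$, and it suffices to prove that $\TA_1(M_\infty,\Ft)$ is torsion, that is, that $H_1(M_\infty;\mathbb{F}(t))=0$.

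Next I would decompose $M_\infty$ geometrically. The boundary of a tubular neighborhood of $L$ in $\mathbb{CP}^2$ is the total space of the degree-one Hopf bundle $p\colon S^3\to L\cong\mathbb{CP}^1$, and up to isotopy $K_\infty$ is carried in disjoint solid-torus neighborhoods of the fibres over the finite set $C\cap L=\{x_1,\dots,x_k\}$. This gives a decomposition $M_\infty=A\cup\bigcup_{j=1}^{k}E_{x_j}$, glued along boundary tori $T_1,\dots,T_k$, where $A=p^{-1}(\mathbb{CP}^1\setminus\bigsqcup_j D_j)$ is a trivial circle bundle, $A\simeq S^1\times(\vee^{k-1}S^1)$, and each $E_{x_j}$ is exactly the local link exterior at infinity at $x_j$, whose homology computes the local twisted Alexander modules at infinity there.

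Then I would dispose of the base and the gluing tori using the line at infinity. The fibre of $p$ represents the meridian $\mu_L$ of $L$, and from the relation $\sum_i d_i\nu_i+\mu_L=0$ in $H_1(U)$ we obtain $\varepsilon(\mu_L)=-\sum_i d_i n_i\neq 0$. Thus $A$ is exactly of the shape treated in Theorem~2.8, with the fibre class $\mu_L$ playing the role of the central generator $x_0$ (a one-line computation for $S^1$ when $k=1$), so $\TA_i(A,\Ft)$ is torsion for all $i$ and vanishes for $i\geq 2$; moreover each $T_j$ carries $\mu_L$, and for a $2$-torus the fact that $\varepsilon$ is nonzero on one generator already forces acyclicity over $\mathbb{F}(t)$, so $H_*(T_j;\mathbb{F}(t))=0$. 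The Mayer--Vietoris sequence over $\mathbb{F}(t)$ then collapses to an isomorphism $H_*(M_\infty;\mathbb{F}(t))\cong\bigoplus_{j=1}^{k}H_*(E_{x_j};\mathbb{F}(t))$.

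Finally I would feed in the hypothesis. Each $E_{x_j}$ is homotopy equivalent to a compact $3$-manifold with torus boundary, so $\chi(E_{x_j})=0$ and the Euler characteristic of $\TC_*(E_{x_j},\Ft)$ vanishes; since $\varepsilon$ is nonzero on every boundary torus we also have $H_0(E_{x_j};\mathbb{F}(t))=0$, so $H_1(E_{x_j};\mathbb{F}(t))=0$ if and only if $H_2(E_{x_j};\mathbb{F}(t))=0$ if and only if the local twisted Alexander module at $x_j$ is torsion. The hypothesis provides this for the points $x_j$ lying on the distinguished component $C_i$. \textbf{The main obstacle} is to propagate it to the remaining points of $C\cap L$: one wants the free part of $H_1(M_\infty)$ to be `localized along $C_i$', pinned down by the already-torsion base $A$ together with the equality of the $\Ft$-ranks of $H_1(M_\infty)$ and $H_2(M_\infty)$ coming from Poincar\'e--Lefschetz duality on the open $3$-manifold $M_\infty$; carrying this out — rather than assuming torsionness of every local module at infinity — is the delicate point, and is where the bulk of the work would lie. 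Short of it, the same argument still yields the (already new) statement that torsionness of all local twisted Alexander modules at infinity implies that $\TA_1(U,\Ft)$ is torsion, thereby extending Theorem~3.4 beyond the transversal case.
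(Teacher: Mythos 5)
Your reduction to the link at infinity cannot be completed, and you have correctly located why: the Mayer--Vietoris decomposition of $M_\infty$ produces one local link exterior $E_{x_j}$ for \emph{every} point of $C\cap L$, whereas the hypothesis only controls the local modules at the points lying on one component. Your own conclusion $H_*(M_\infty;\mathbb{F}(t))\cong\bigoplus_j H_*(E_{x_j};\mathbb{F}(t))$ shows that the summands attached to the other points are completely unconstrained by the hypothesis, and no Euler-characteristic or Poincar\'e--Lefschetz argument on $M_\infty$ will force them to vanish; what your argument actually proves is exactly the weaker statement you end with (torsionness of \emph{all} local modules at infinity implies $\TA_1(U,\Ft)$ is torsion). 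So there is a genuine gap, and it is not a technical one that more duality will fix: the space you decomposed simply does not localize along the distinguished component.

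The missing idea is to perform the same kind of decomposition on the boundary of a regular neighborhood $T(C_1)$ of the distinguished component $C_1$ itself, rather than on the sphere at infinity. Setting $T=T(C_1)\setminus(C\cup L)$, the Zariski--Lefschetz theorem still gives a surjection $\pi_1(T)\to\pi_1(U)$, hence $\TA_1(T,\Ft)\twoheadrightarrow\TA_1(U,\Ft)$, and one writes $T\simeq \partial T(C_1)\setminus (C\cup L)$ as $N$ glued along tori to the local link exteriors $S_x\setminus L_x$, where $N=F_1\times S^1$ is the (trivial, since $F_1$ is open) circle bundle over $C_1$ with disks around its singular points removed, and $x$ now runs over the singular points of $C_1$ \emph{only}, interior and at infinity. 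The pieces $N$ and the gluing tori are handled exactly as you handle $A$ and the $T_j$ (here $\varepsilon$ of the fibre class is the weight $n_1>0$ of the meridian of $C_1$, so Theorem 2.8 applies); the interior local link exteriors are torsion unconditionally by the Milnor fibration argument of Proposition 3.11; and the local link exteriors at infinity on $C_1$ are torsion by hypothesis. This decomposition sees only the singularities of $C_1$, which is precisely the localization your approach is missing.
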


\begin{proof}
Let $C_1$ be such component of $C$. Let $T(C_1)$ be a regular neighborhood of $C_1$ in $\mathbb{CP}^2$ and $T=T(C_1)\setminus (C\cup H)$. By Lefschetz hyperplane theorem, there is a surjection $\pi_1(T)\rightarrow \pi_1(U)$. Hence, there is a surjective $\Ft$-module homomorphism $\TA_1(T,\Ft) \rightarrow \TA_1(U,\Ft)$. It suffices to show that $\TA_1(T,\Ft)$ is a torsion $\Ft$-module.

Let $F_1$ be the surface obtained from $C_1$ with a disk around each singular point of $C_1$, including at infinity, removed (cf theorem 5.6 in \cite{TC}). Let $N=F_1\times S^1$. Its boundary will be a union of disjoint tori $T^2$'s. Therefore, $T\simeq \partial T(C_1)$ is homotopic to the decomposition $$N \cup\bigcup_{x\in \text{Sing}(C_1)\cup \text{Sing}_{\infty}(C_1)} S_x\setminus L_x$$

There is an associated Mayer-Vietoris sequence $$...\rightarrow \oplus\TA_1(T^2,\Ft) \rightarrow \TA_1(N,\Ft) \oplus \TA_1(S_x\setminus L_x,\Ft) \rightarrow \TA_1(T,\Ft) \rightarrow \oplus\TA_0(T^2,\Ft)\rightarrow ...$$

Twisted Alexander modules of tori are torsion (\cite{KL}). $\pi_1(N)=\pi_1(F_1\times S^1)\cong \Z\times \Z^{*b_1(F_1)}$, free group of rank $b_1(F_1)$. Since $\varepsilon$ takes the meridian around $C_1$ to $n_1>0$, by the calculations in section 2, twisted Alexander modules of $N$ are also torsion. 

\end{proof}

\begin{cor} 
If the singularities at infinity of a single component of $C$ are of type $x^k-y^k$ or  $\textbf{A}_{2n-1}$, then the first twisted Alexander module of $C$ is torsion. 
\end{cor}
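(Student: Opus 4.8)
The plan is to deduce this from the preceding theorem together with the local computations of Section 2. By that theorem it suffices to exhibit one component of $C$ — namely the component $C_1$ with the prescribed behaviour at infinity — all of whose local twisted Alexander modules at infinity are torsion $\Ft$-modules; the local modules at the interior singular points of $C_1$ are torsion by the proposition on interior singular points, so nothing at finite distance needs to be checked.

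So I would fix a point $x\in C_1\cap L$, set $E=S_x^3\setminus(C\cup L)$, and endow $\pi_1(E)$ with the twisting data $(\varepsilon\circ i_*,\rho\circ i_*)$ pulled back along the inclusion-induced map $i_*\colon\pi_1(E)\to\pi_1(U)$. By hypothesis the germ $(C\cup L,x)$ is topologically either $x^k=y^k$ or $\textbf{A}_{2n-1}$, so $E$ is homotopy equivalent either to the exterior of the generalized Hopf link on $k$ components, to which Theorem 2.8 applies, or to the link exterior computed in Section 2.3 (the complement appearing in the corollary on $y(y-x^n)=0$). These computations give that $\TA_i(E,\Ft)$ is torsion for every $i$, once one knows that the pulled-back $\varepsilon$ satisfies the non-triviality requirement used there: for the Hopf link this is the condition $\varepsilon\circ i_*(x_0)\neq 0$ on the central generator $x_0$, which forces $\partial_2$ injective, hence $\TA_2(E,\Ft)=0$, hence — since $\chi(E)=0$ — $\TA_1(E,\Ft)$ torsion; for $\textbf{A}_{2n-1}$ the requirement is that $\varepsilon\circ i_*$ not vanish on $\beta$, which gives triviality of the kernel of the twisted Alexander matrix of Section 2.3.

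I expect the main work to be verifying this non-triviality, by the same linking-number bookkeeping as in the proof of Theorem 3.4. There $x_0$ is the product of the meridians of the $k$ local branches of $C\cup L$ at $x$; exactly one branch lies on $L$ and contributes $\psi(\mu_L)=-\sum_{l=1}^{r}d_ln_l$, while each of the other branches lies on some $C_l$ and contributes $n_l$, so $\varepsilon\circ i_*(x_0)=\sum_l m_ln_l-\sum_l d_ln_l$, where $m_l$ is the number of local branches of $C_l$ at $x$. Since $m_l$ is at most the local intersection multiplicity of $C_l$ with $L$ at $x$, which is at most $d_l$, this is $\le 0$; and a B\'ezout-and-genus argument rules out equality as soon as some component through $x$ has degree $\ge 2$, because an irreducible curve of degree $\ge 2$ cannot meet a line in a single point with all of its branches smooth and transversal. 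The only residual possibility is that every component of $C$ is a line and all of them are concurrent with $x$ on $L$, which falls outside the intended scope (it would force $L$ through the base point of a pencil of lines, contrary to the genericity of $L$ implicit in the definition of $U$). Hence $\varepsilon\circ i_*(x_0)<0$, a shorter version of the same count handles the $\textbf{A}_{2n-1}$ case (where a line cannot occur as a local branch in any event), and the preceding theorem then yields that $\TA_1(\mathbb{P}^2\setminus C\cup L,\Ft)$ is torsion, as asserted.
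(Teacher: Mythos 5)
Your overall strategy is the same one the paper intends: the corollary is a direct consequence of Theorem~3.13 once one knows the local modules at infinity for $C_1$ are torsion, and the latter is supplied by Theorem~2.8 (for $x^k-y^k$) and the computation in Section~2.3 (for $A_{2n-1}$). The paper gives no proof at all, and you are right to notice that Theorem~2.8 carries the hypothesis $\varepsilon(x_0)\neq 0$ and that this must be checked for the pulled-back $\varepsilon\circ i_*$. Your linking-number identity
$\varepsilon\circ i_*(x_0)=\sum_l m_l n_l-\sum_l d_l n_l$ (with $m_l$ the number of local branches of $C_l$ at $x$, and the branch of $L$ contributing $-\sum_l d_l n_l$) is correct and is exactly the same bookkeeping as in the proof of Theorem~3.4. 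So the skeleton is right and you have filled in a genuinely missing verification.

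There are two points that need repair. First, the parenthetical ``(where a line cannot occur as a local branch in any event)'' for the $A_{2n-1}$ case is simply false: $x$ is a singularity \emph{at infinity}, so the line $L$ is always one of the two branches of the $A_{2n-1}$ germ, and the other branch may also lie on a linear component of $C$ (compare the model $y(y-x^n)=0$ in Corollary~2.10, where one branch \emph{is} a line). The verification that $\varepsilon\circ i_*(\beta)\neq 0$ for $A_{2n-1}$ therefore has to be done with the same $-\sum_l d_l n_l$ contribution from $L$ rather than by appealing to the absence of linear branches; the count still works because $\varepsilon(\beta)=n_l-\sum_m d_m n_m$ can vanish only when $r=1$ and $d_1=1$, which precludes an $A_{2n-1}$ singularity with $n\geq 2$.

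Second, the way you dispose of the case $\sum_l m_l n_l=\sum_l d_l n_l$ — all components lines concurrent at $x\in L$ — by invoking ``the genericity of $L$ implicit in the definition of $U$'' is not supported by the paper: Section~3.1 allows an arbitrary line $L$, and genericity is only discussed as a sufficient condition for $U$ to be well defined up to homotopy. In fact that configuration is a genuine counterexample to the corollary as literally stated: if $C$ consists of $d\geq 2$ lines through a point $x\in L$, then in the chart $\mathbb{CP}^2\setminus L\cong \C^2$ the curve $C$ is $d$ parallel lines, so $U\simeq \vee^{d}S^1$, $\chi(U)=1-d$, and since $\TA_0(U,\Ft)$ is torsion and $\TA_i(U,\Ft)=0$ for $i\geq 2$, one gets
$\mathrm{rank}_{\Ft}\TA_1(U,\Ft)=(d-1)\dim_{\mathbb{F}}\mathbb{V}>0$; yet every singularity at infinity of $C_1$ is of type $x^{d+1}-y^{d+1}$, so the hypothesis of the corollary is satisfied. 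The honest conclusion of your argument is therefore that the corollary needs an explicit additional hypothesis (e.g.\ $L$ generic, or at least that $L$ does not pass through the base point of a pencil containing all components of $C$), and under that hypothesis your proof goes through. Flagging this as a gap in the statement rather than a step that is ``outside the intended scope'' would be the accurate way to phrase it.
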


\begin{cor} 
Let $C=C_1\cup...\cup C_r$ be an affine plane curve which is transversal at infinity. Set $M=\C^2\setminus C$. Consider an epimorphism $\varepsilon: \pi_1(M) \rightarrow \Z$ and an unitary representation $\rho:\pi_1(M) \rightarrow GL(\mathbb{V})$. Let $s=\#Sing(C)$ and consider $(S^3_k,L_k)$ the local link singularities, $k=1,2,...,s$, plus the link at infinity $(S^3_{\infty},L_{\infty})$. Then 
$$\alpha\cdot \prod_{k=1,...,s,\infty} \Delta^{\varepsilon,\rho}(S^3_k\setminus L_k) =  \Delta^{\varepsilon,\rho}(M) \cdot\bar{\Delta^{\varepsilon,\rho}(M)}\cdot det(\phi^{\varepsilon,\rho}(M)), $$
where $\alpha = \prod_{q=1}^r det(Id-\rho(\nu_q)t^{\varepsilon(\nu_q)})^{s_q-\chi(C_q)}$, with $s=\#Sing(C)\cap C_q$ and $\nu_q$ is a meridian of $C_q$, and $\phi^{\varepsilon,\rho}(M)$ is the intersection form described in \cite{CF}.
\end{cor}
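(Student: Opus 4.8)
The plan is to deduce this identity from the main theorem of Cogolludo and Florens (\cite{TC}), which proves exactly this equation under an acyclicity hypothesis, by verifying that the hypothesis holds automatically in the present situation. Working over the field of fractions $\mathbb{F}(t)$, their theorem requires: (i) the twisted chain complexes of the local link exteriors $S^3_k\setminus L_k$, $k=1,\dots,s,\infty$, to be acyclic; and (ii) the twisted Alexander modules $\TA_0(M,\Ft)$ and $\TA_1(M,\Ft)$ to be torsion, the only remaining twisted homology of $M$ being concentrated in degree $2$, where the unitarity of $\rho$ together with Poincar\'e--Lefschetz duality produces the nondegenerate Hermitian intersection form $\phi^{\varepsilon,\rho}(M)$ accounting for the factors $\overline{\Delta^{\varepsilon,\rho}(M)}$ and $\det(\phi^{\varepsilon,\rho}(M))$. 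Each of these conditions has, in substance, already been established in this paper, so the corollary amounts to assembling them.

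For condition (ii): since $C$ is transversal at infinity, Theorem 3.4 applies and yields that $\TA_i(M,\Ft)$ is torsion for $i=0,1$; and since $M$ is a smooth affine surface it has the homotopy type of a $2$-dimensional CW-complex, so $\TA_i(M,\Ft)=0$ for $i\geq 3$. Thus the only surviving homology is the (free) group $\TA_2(M,\Ft)$ with its intersection form, exactly the situation demanded in \cite{TC}.

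For condition (i) at the interior singular points, Proposition 3.12 shows that each $\TA_i(S^3_k\setminus L_k,\Ft)$ is a finite $\mathbb{F}$-vector space, hence torsion over $\Ft$ and trivial over $\mathbb{F}(t)$. For the link at infinity, transversality of the degree-$d$ curve $C$ to the line at infinity means that the leading form of $C$ defines $d$ distinct lines, so $(S^3_\infty,L_\infty)$ is the generalized Hopf link with $d$ components studied in Section 2.2; moreover the restrictions $\varepsilon\circ i_*$ and $\rho\circ i_*$ to $\pi_1(S^3_\infty\setminus L_\infty)$ satisfy $\varepsilon\circ i_*(x_0)=\sum_{l=1}^r d_l n_l>0$, which is precisely the computation carried out inside the proof of Theorem 3.4. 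Hence Theorem 2.8 applies and all $\TA_i(S^3_\infty\setminus L_\infty,\Ft)$ are torsion, so this complex too is acyclic over $\mathbb{F}(t)$. Feeding these verifications into the theorem of \cite{TC} yields the displayed equation, with $\alpha=\prod_{q=1}^r\det(Id-\rho(\nu_q)t^{\varepsilon(\nu_q)})^{s_q-\chi(C_q)}$ the Euler-characteristic normalization along the components $C_q$ and $\phi^{\varepsilon,\rho}(M)$ the Hermitian intersection form of \cite{TC}.

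The main obstacle I anticipate is bookkeeping rather than new mathematics: one must match our normalization of twisted Alexander polynomials with the one used in \cite{TC} (which, as noted in Remark 3.8, follows the convention of \cite{WD}, related to ours through Theorem 4.1 of \cite{KL} once the first twisted Alexander module is torsion), confirm that our $(S^3_\infty,L_\infty)$ coincides with the ``link at infinity'' of \cite{TC}, and read off the exact form in which \cite{TC} phrases its acyclicity assumption so that the three verifications above correspond to it term by term. No genuinely new computation of twisted Alexander modules is required; the content is supplied entirely by Theorems 2.8 and 3.4 and Proposition 3.12.
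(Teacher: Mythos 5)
Your proposal is correct and is essentially the argument the paper intends: the corollary is exactly the Cogolludo--Florens equation with its acyclicity hypothesis discharged by Theorem 3.4 (global modules torsion under transversality at infinity), Proposition 3.12 (interior local link modules torsion), and Theorem 2.8 applied to the generalized Hopf link at infinity. The paper states the corollary without a separate proof precisely because it is this assembly of the preceding results, so your route matches the paper's.
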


\subsection{Examples and conjectures}

\begin{example}
$C: x^2-y^2 \subset \C^2$.

From \cite{KL}, we know $\Delta=\Delta_1/\Delta_0 = 1$ and hence $$\Delta_1=\Delta_0 = \gcd(\det(\rho(x_1)t^{n_1} -I), \det(\rho(x_2)t^{n_2}-I)).$$

Notice that the global Reidemeister torsion (\cite{TC} for definition) and the local Reidemeister torsion at the singular point are the same because $x^2-y^2$ is a central line arrangement. Therefore, by theorem 5.6 in \cite{TC}, the determinant of the intersection form $\det\varphi^{\varepsilon,\rho}(C)$ is equal to $$\det((I-\rho(x_1)t^{n_1})(I-\rho(x_2)t^{n_2})).$$

\end{example}

\begin{example}
$C$ is the generic line arrangement of 3 lines in $\C^2$. 

We know that $\TA_2(U,\Ft)=0$ and $\TA_1(U,\Ft)$ is torsion. Also all the local twisted Alexander modules are torsion. Theorem 4.1 in \cite{KL} shows

$$\Delta=\Delta_1/\Delta_0 = \gcd(\det(\rho(x_1)t^{n_1} -I), \det(\rho(x_2)t^{n_2}-I),\det(\rho(x_3)t^{n_3}-I)).$$ 

All the local Reidemeister torsion at the three singular points are 1 because the link exteriors are tori. The Reidemeister torsion at infinity is computed using the result on Hopf link with 3 components, $$\Delta_{\infty} = \det(\rho(x_3x_2x_1)t^{n_1+n_2+n_3}-I).$$ 

\end{example}

\begin{example}
$C:(x^2-y^3)(x-1) \subset \C^2$. 

Since the two components intersect transversally, by \cite{OKA}, $$\pi_1(U) \cong <x,y,z|xyx=yxy, xz=zx, yz=zy>.$$

There are three $\textbf{A}_1$ singularities and one $\textbf{A}_2$ singularity. We know that $\TA_2(U,\Ft)=0$ and $\TA_1(U,\Ft)$ is torsion. Also all the local twisted Alexander modules are torsion.

$$\Delta=\Delta_1/\Delta_0 = \gcd(\det(\rho(x)t^{n_x} - \rho(y)\rho(x)t^{n_x+n_y} -I), \det(\rho(z)t^{n_z}-I))).$$ 

Local Reidemeister torsion at the three $\textbf{A}_1$'s are 1. The Reidemeister torsion at infinity is computed using the result on Hopf link with 4 components. That at $\text{A}_2$ is given by $$\Delta_{A_2} = \frac{\det(\rho(x)t^{n_x} - \rho(y)\rho(x)t^{n_x+n_y} -I)}{\det(I-\rho(x)t^{n_x})}.$$

\end{example}

Recall that the classical Alexander polynomial of curves has a weakness. Namely, if a curve $C$ is the union of irreducible curves which intersect transversely, then its Alexander polynomial is trivial, being $(t-1)^{r-1}$(\cite{OKA}). In the previous example, we saw that the twisted Alexander polynomial is ``non-trivial" in some sense, even though we have the two components intersecting transversely. More precisely, we saw $\Delta \neq 1$ for some representation $\rho$. But $\Delta$ is trivial in the first example when the two components are both smooth.

\begin{con}
Suppose all components of $C$ intersect transversely. Then for all $\rho$, $\Delta_1/\Delta_0 =1$ if and only if $C$ is union of two irreducible nodal curves.
\end{con}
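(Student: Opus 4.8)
The plan is to reduce the statement to a property of $\pi_1(U)$ and then treat the two implications separately. Observe first that, for $i=0,1$, the module $\TA_i(U,\Ft)$ depends only on the triple $(\pi_1(U),\varepsilon,\rho)$: writing $M=\Ft\otimes\mathbb{V}$ for the associated $\Z[\pi_1(U)]$-module, so that $\TA_i(U,\Ft)=H_i(U;M)$, the classifying map $U\to B\pi_1(U)$ may be built by attaching to $U$ only cells of dimension $\geq 3$, and hence induces an isomorphism $H_i(U;M)\xrightarrow{\sim}H_i(\pi_1(U);M)$ for $i\leq 1$. Consequently $\Delta_{0,U}^{\varepsilon,\rho}$, the torsion order $\Delta_{1,U}^{\varepsilon,\rho}$, and the ratio $\Delta_1/\Delta_0$ are all determined by $(\pi_1(U),\varepsilon,\rho)$ alone. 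Thus the conjecture is equivalent to the combination of: (a) $\Delta_1/\Delta_0=1$ for all $\rho$ precisely when $\pi_1(U)\cong\Z^2$; and (b) under the transversality hypothesis, $\pi_1(U)\cong\Z^2$ holds exactly when $C=C_1\cup C_2$ with $C_1,C_2$ irreducible and nodal.

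For the ``if'' direction, let $C=C_1\cup C_2$ with each $C_i$ irreducible nodal and the two meeting transversely; replacing $L$ by a generic line (which leaves the homotopy type of $U$ unchanged) we may assume $C$ transversal to $L$, so that $C\cup L$ is a nodal projective curve. By the Zariski--Deligne--Fulton theorem $\pi_1(U)=\pi_1(\mathbb{CP}^2\setminus(C\cup L))$ is abelian, and since $b_1(U)=r=2$ this forces $\pi_1(U)\cong\Z^2$ (equivalently, the Oka--Sakamoto product theorem gives $\pi_1(U)\cong\pi_1(\C^2\setminus C_1)\times\pi_1(\C^2\setminus C_2)\cong\Z\times\Z$). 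By the reduction above, $\Delta_{1,U}^{\varepsilon,\rho}/\Delta_{0,U}^{\varepsilon,\rho}$ equals the same ratio for $T^2=B\Z^2$, which I would compute by Fox calculus on the presentation $\langle x_1,x_2\mid x_1x_2x_1^{-1}x_2^{-1}\rangle$: the twisted complex is $0\to\Ft^{r}\xrightarrow{\partial_2}\Ft^{2r}\xrightarrow{\partial_1}\Ft^{r}\to 0$ with $\partial_1=(\rho(x_1)t^{\varepsilon(x_1)}-I\mid\rho(x_2)t^{\varepsilon(x_2)}-I)$ and $\partial_2=\binom{I-\rho(x_2)t^{\varepsilon(x_2)}}{\rho(x_1)t^{\varepsilon(x_1)}-I}$, so that $\TA_2(T^2,\Ft)=\ker\partial_2=0$ (as $\varepsilon(x_i)>0$) and a short computation identifying $\operatorname{im}\partial_2$ inside $\ker\partial_1$ shows $\TA_1(T^2,\Ft)$ and $\TA_0(T^2,\Ft)$ have the same order; this is Wada's invariant of the presentation, or Milnor's product formula for Reidemeister torsion together with $\chi(S^1)=0$. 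Hence $\Delta_1/\Delta_0=1$ for every $\rho$.

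For the ``only if'' direction I argue by contraposition: assuming all components of $C$ meet transversely but $C$ is not a union of two irreducible nodal curves, I exhibit a $\rho$ with $\Delta_1/\Delta_0\neq 1$. When $r\neq 2$ the trivial one-dimensional $\rho$ already suffices. Here $\Delta_0=\gcd_i(t^{n_i}-1)=t^{\gcd(n_1,\dots,n_r)}-1=t-1$ by the standing hypothesis $\gcd(n_1,\dots,n_r)=1$. If $r=1$, then $\Delta_1$ is the classical Alexander polynomial $\Delta_C(t)$, and the Milnor sequence gives $H_1(U_\infty,\Q)/(t-1)H_1(U_\infty,\Q)\cong\ker\bigl(H_1(U,\Q)\to H_0(U_\infty,\Q)\bigr)$, which vanishes since $b_1(U)=1$; thus $\Delta_C(1)\neq 0$ and $\Delta_1$ is not a unit multiple of $t-1$. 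If $r\geq 3$, the same Milnor sequence shows $H_1(U_\infty,\Q)/(t-1)$ has dimension $r-1$, and since $H_1(U,\Ft)$ is torsion (Theorem 3.4, after making $L$ generic) this forces $(t-1)^{r-1}\mid\Delta_1$, so that $(t-1)^{r-2}$ divides $\Delta_1/\Delta_0$ with $r-2\geq 1$.

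The remaining case, $r=2$ with, say, $C_1$ not nodal, is the crux. Here the trivial representation is useless: a K\"unneth computation over $\Ft$ gives $\Delta_1=\Delta_0=t-1$ --- this is the classical ``weakness'' of the Alexander polynomial for transverse unions noted in the paper --- so a genuinely twisted $\rho$ is needed. Since $C_1$ has a singular point $p$ worse than a node (a cusp, a higher cusp, an ordinary $m$-fold point with $m\geq3$, or a tangential double point), I would choose $\rho$ adapted to the local monodromy at $p$ --- for instance a Burau-type representation at a cusp --- so that the local twisted Alexander polynomial $\Delta^{\varepsilon,\rho}(S^3_p\setminus L_p)$ is nontrivial, as in the $\mathbf{A}_{2n-1}$ calculations of Section 2.3 and in Example 3.19. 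Feeding this into the Cogolludo--Florens equation of Corollary 3.16 (whose torsion hypotheses are met by Theorem 3.4 and Proposition 3.12), the left-hand product of local polynomials acquires an honest factor coming from $p$. The main obstacle is to show that this factor cannot be absorbed by the intersection-form term $\det\phi^{\varepsilon,\rho}(M)$ or by the coefficient $\alpha$, and so must occur in $\Delta^{\varepsilon,\rho}(M)$ itself; this requires the precise comparison between $\det\phi^{\varepsilon,\rho}$ and the \emph{local} Reidemeister torsions visible in Examples 3.17--3.19, and is presumably why the statement is only conjectural. A second, independent obstacle is (b): the implication ``$C$ nodal $\Rightarrow\pi_1(U)\cong\Z^2$'' is classical, but its converse is a rigidity statement --- an irreducible plane curve transversal at infinity with abelian complement group should have only nodes --- which is delicate and, in view of Nori-type cuspidal curves with abelian fundamental group, may require strengthening the hypothesis to ``$C$ nodal''.
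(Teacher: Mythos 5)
You should first be aware that the paper offers no proof of this statement: it is explicitly labelled a conjecture, and is supported only by Example 3.17 (two lines, where $\Delta_1/\Delta_0=1$) and the ``verifying'' Example 3.21 (a transverse union of two singular torus-type curves, where $\Delta_1/\Delta_0\neq 1$ for a suitable $\rho$). So there is no argument of the author's to compare yours against, and yours must stand alone. As a proof it does not stand: you yourself flag the decisive case --- $r=2$ with one component worse than nodal --- as unresolved, and the Cogolludo--Florens route you sketch for it founders exactly where you say it does, namely on showing that the nontrivial local factor at the bad singular point cannot be absorbed into $\det\phi^{\varepsilon,\rho}(M)$ or into $\alpha$. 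The parts you do complete are sound and worth recording: the observation that $\TA_0$ and $\TA_1$ depend only on $(\pi_1(U),\varepsilon,\rho)$ (attaching cells of dimension $\geq 3$ changes nothing in degrees $\leq 1$ of the twisted chain complex), the Deligne--Fulton/Oka--Sakamoto identification $\pi_1(U)\cong\Z^2$ for a transverse union of two irreducible nodal curves, the torus computation $\Delta_1=\Delta_0$, and the disposal of $r=1$ and $r\geq 3$ by the trivial representation. (Your claimed equivalence of the conjecture with (a) and (b) is stronger than the conjecture and is not needed; only the two implications you actually use are.)

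The more important point is that your reduction, pushed one step further, appears to \emph{refute} the ``only if'' direction rather than prove it, so the remaining gap is not merely technical. Take $C_1$ an irreducible cuspidal curve with abelian projective complement group --- e.g.\ Zariski's sextic with six cusps not on a conic, for which $\pi_1(\mathbb{P}^2\setminus C_1)\cong\Z/6$. Since $\pi_1(\C^2\setminus C_1)$ (generic line at infinity) is a central extension of this cyclic group by the central loop at infinity, it is generated by two commuting elements and hence equals $\Z$. Adjoining a generic line $C_2$ and applying Oka--Sakamoto gives $\pi_1(U)\cong\Z\times\Z$, although $C=C_1\cup C_2$ is a transverse union of two irreducible curves of which one is cuspidal, not nodal. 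By your first paragraph, $\Delta_1/\Delta_0$ then agrees with its value for $T^2$, namely $1$, for \emph{every} $\rho$. So either the reduction to $\pi_1$ fails (it does not, in degrees $\leq 1$), or the conjecture requires a hypothesis stronger than transversality of the components. This is exactly the caveat you raise in your last sentence; it should be promoted from a passing remark to the principal conclusion of your analysis, since it shows that no proof along these (or any) lines can close the $r=2$ case as the statement is currently formulated.
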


Here is a verifying example:
\begin{example}
Suppose $C$ is the union of $C_1$ and $C_2$, where $C_1,C_2$ are irreducible and they intersect transversely. Assume $C_1$ is diffeomorphic to $x^{p_1}-y^{q_1} \subset \C^2$, where $(p_1,q_1)=1$ and $C_2$ is diffeomorphic to $x^{p_2}-y^{q_2} \subset \C^2$, where $(p_2,q_2)=1$. 
By \cite{AO} and \cite{OKA}, we know
$$\pi_1(U) \cong <x,y,z,w| x^{p_1}y^{-q_1}, z^{p_2}w^{-q_2},xzx^{-1}z^{-1},xwx^{-1}w^{-1},yzy^{-1}z^{-1},ywy^{-1}w^{-1}>.$$

By fixing a generic line to $C$, we have that $\TA_1(U,\Ft)$ is torsion over $\Ft$.
It is also clear that all local twisted Alexander modules are torsion. Take $\varepsilon=lk\#$ and $\rho$ to be rank-1. Take $\rho(x)$ to be a non-trivial $p_1$-th root of unity and $\rho(y)$ to be a non-trivial $q_1$-th root of unity. Take $\rho(z)=1=\rho(w)$. Then by theorem 4.1 in \cite{KL}, $\Delta=\Delta_1/\Delta_0 = t-1 \neq 1$ and $\Delta_0 =1$. Similar results can be obtained using similar choice of $\rho$.
\end{example}

\section{Twisted Alexander polynomials of complex hypersurfaces}

We generalize the notion of twisted Alexander modules to the study high-dimensional complex hypersurface complements. A major difference is that for curves, there are only isolated singularities, while for high dimensional hypersurfaces, the singularities could be non-isolated. We will make the idea of ``local" modules explicit. 

\subsection{Definition}

Let $V$ be a reduced hypersurface in $\CP$ of degree $d$ with $r$ irreducible components and $H$ be a hyperplane in $\CP$. Let $$U=\CP \setminus (V\cup H) = \C^{n+1} \setminus (V\setminus (V\cap H))$$ with the natural identification of $\C^{n+1}$ and $\CP \setminus H$. Alternatively, let $f(x_1,...,x_{n+1}): \C^{n+1} \rightarrow \C$ be a reduced polynomial of degree $d$. Then $V$ is given by projectivizing $f=0$. $H$ is given by $x_0=0$.

Notice that $H_1(U) \cong \Z^r$ and is generated by the homology classes $\nu_i$ of meridians $\gamma_i$ of the components $V_i$ of $V$ (\cite{DB}). Let $n_i$ be $r$ positive integers with gcd($n_1,...n_r$)=1. Let $$\varepsilon: \pi_1 (U) \xrightarrow{ab} H_1(U) \xrightarrow{\nu_i \rightarrow n_i} \Z$$, which is an epimorphism. 

We obtain the linking number homomorphism if $n_i=1$, $$lk\#: \pi_1 (U) \xrightarrow{[\alpha] \rightarrow lk(\alpha,V \cup -dH)} \Z.$$

Fix a finite dimensional $\mathbb{F}$-vector space $\mathbb{V}$ and a linear representation $\rho: \pi_1(U) \rightarrow GL(\mathbb{V}).$

Using section 2, the $\Ft$-modules $\TA_i(U;\Ft)$ are defined for all $i$ and are called the $i$-th twisted Alexander modules of the hypersurface $V$. Again we denote $$H_i^{\rho}(U,\Ft) = H^{lk\#,\rho}_i(U,\Ft).$$

Because $U$ is the complement of an affine hypersurface, it is isomorphic to a smooth affine hypersurface in $\C^{n+2}$, by 1.6.7 in \cite{DB}. Hence $U$ is a Stein manifold of complex dimension $n+1$ and therefore $U$ has the homotopy type of a finite CW-complex of real dimension $n+1$ (\cite{DB}). As a result, $\TA_i(U;\Ft) =0$ for $i>n+1$ and $\TA_{n+1}(U;\Ft)$ is a free $\Ft$-module. Moreover for $0\leq i \leq n$, $\TA_i(U;\Ft)$ is of finite type over $\Ft$.

\subsection{When are $\TA_i(U,\Ft)$ torsion?}

We want to address the following questions: What are the free ranks over $\Ft$? When are these modules torsion over $\Ft$?

Classical theory tells us some partial answers:

\begin{thm}\cite{MT} Suppose $H$ is a generic hyperplane to $V$ ($H$ is transversal to $V$ in the stratified sense). Then the Alexander module $H_i(U;\Qt)$ is a torsion $\Qt$-module, and hence a finite dimensional vector space over $\Q$, for $0\leq i\leq n$. (also see \cite{RF})
\end{thm}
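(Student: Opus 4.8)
The plan is to turn the statement into a finiteness assertion about an infinite cyclic cover and prove it by reduction to lower dimensions. Since $\varepsilon=lk\#$ and $V$ is reduced, the linking number homomorphism is realized by the polynomial map $f\colon U=\C^{n+1}\setminus\{f=0\}\to\C^{*}$, so the infinite cyclic cover $U^{c}$ (connected, because $lk\#$ is onto) is the fiber product of $f$ with $\exp\colon\C\to\C^{*}$, and $H_{i}(U;\Qt)\cong H_{i}(U^{c};\Q)$ as $\Qt$-modules with $t$ the deck transformation. As noted above, $U$ has the homotopy type of a finite CW-complex, so each $H_{i}(U;\Qt)$ is of finite type over $\Qt$; for such a module, being torsion over $\Qt$ is equivalent to being finite-dimensional over $\Q$, which is also the second conclusion sought. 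Thus it suffices to prove that $H_{i}(U^{c};\Q)$ is finite-dimensional over $\Q$ for $0\le i\le n$. (The case $i=0$ is already Proposition 2.1, since $\varepsilon$ is onto.)

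The first step is an induction on $n$ using generic hyperplane sections. The base case $n=1$ is precisely Theorem 3.4 with $\mathbb{F}=\Q$ and $\rho$ trivial: the complement of a reduced plane curve transversal to the line at infinity has torsion $H_{0}$ and $H_{1}$ over $\Qt$. For the inductive step choose a generic hyperplane $H'\subset\CP$. By the Zariski-type Lefschetz theorem (Hamm--L\^e), for $H'$ general enough the section $V\cap H'$ is a reduced hypersurface transversal to the generic hyperplane $H\cap H'$ of $H'\cong\mathbb{CP}^{n}$, the inclusion induces a surjection $\pi_{1}(U\cap H')\twoheadrightarrow\pi_{1}(U)$, and $lk\#$ on $U$ restricts on $U\cap H'$ to $lk\#$ of $V\cap H'$; hence the infinite cyclic covers are compatible and the maps on Alexander modules are $\Qt$-linear. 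Because $U=\C^{n+1}\setminus\{f=0\}$ is a smooth affine variety of complex dimension $n+1$, the affine Lefschetz theorem gives $H_{i}(U\cap H',\,U;\Q)=0$ for $i\le n$, so the inclusion induces a $\Qt$-isomorphism $H_{i}(U\cap H';\Qt)\xrightarrow{\ \sim\ }H_{i}(U;\Qt)$ for $i\le n-1$ and a surjection for $i=n$. Applying the inductive hypothesis to $V\cap H'$ (a hypersurface in $\mathbb{CP}^{n}$ with complement of complex dimension $n$) gives torsion Alexander modules in degrees $0\le i\le n-1$, and transporting this across the isomorphisms shows $H_{i}(U;\Qt)$ is torsion for $0\le i\le n-1$.

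The remaining, and genuinely harder, case is the top degree $i=n$, where the Lefschetz reduction only delivers a surjection from the possibly non-torsion module $H_{n}(U\cap H';\Qt)$; this is exactly where stratified transversality of $H$ to $V$ must enter, and I expect it to be the main obstacle. The robust way to handle it — and, in fact, to treat all degrees $0\le i\le n$ uniformly, bypassing the induction — is sheaf-theoretic: one identifies $H_{i}(U^{c};\Q)$ with the hypercohomology over the affine variety $\C^{n+1}=\CP\setminus H$ of the peripheral complex associated to $(\C^{n+1},V)$, built from the nearby cycles of $f$ and carrying the monodromy $\Qt$-action. Transversality of $H$ to $V$ makes this complex, up to shift, a perverse sheaf supported on $V$; since $\C^{n+1}$ is affine, Artin--Grothendieck vanishing confines its hypercohomology to a single range of degrees, and with constructibility this forces $H_{i}(U^{c};\Q)$ to be finite-dimensional over $\Q$ for $i\le n$, while $H_{n+1}(U^{c};\Q)$ may stay infinite-dimensional, so the range is sharp. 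Geometrically this refines the classical picture: transversality makes $f$ tame, its generic fiber is a bouquet of $n$-spheres (the affine counterpart of Theorem 3.2), and the only extra homology of $U^{c}$ below degree $n+1$ comes from the finitely many atypical fibers of $f$ over $\C^{*}$. Once finite-dimensionality over $\Q$ holds for $0\le i\le n$, both assertions of the theorem follow at once.
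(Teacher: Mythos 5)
The paper does not prove this statement directly (it cites \cite{MT} and \cite{RF}), but it does prove the twisted generalization as Theorem 4.6, and the argument there is quite different from yours. The paper's route is: by the Lefschetz-type theorem of \cite{RF}, the inclusion $T_H\hookrightarrow U$ of the deleted tubular neighborhood of $H$ induces isomorphisms on $\pi_i$ for $i\le n-1$ and an epimorphism for $i=n$, hence isomorphisms and an epimorphism on Alexander modules in that range; $T_H$ is homotopy equivalent to $S^{2n+1}\setminus\{g=0\}$ for $g$ the homogeneous form defining the affine cone on $V\cap H$; the Milnor fibration of $g$ identifies the infinite cyclic cover $(T_H)_\infty$ with the Milnor fiber $F_g$, a compact $n$-dimensional CW-complex; so all of $H_i(T_H;\Qt)$ are finite-dimensional over $\Q$, and torsionness of $H_i(U;\Qt)$ for $0\le i\le n$ follows in one stroke. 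This is essentially Maxim's original argument, and it is both more elementary and uniform in $i$ than what you propose.

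Your inductive step is sound only up to degree $n-1$, and you correctly flag the top degree as the real difficulty: the Lefschetz section only gives a surjection $H_n(U\cap H';\Qt)\twoheadrightarrow H_n(U;\Qt)$ whose source is generically free of positive rank (cf.\ Corollary 3.3 of the paper), so no torsion conclusion can be drawn. So the induction alone is not a proof. Your proposed sheaf-theoretic patch is in the right spirit — it parallels \cite{RF} — but as written it is a sketch with unproved key steps: precisely what the ``peripheral complex'' is and why transversality makes it perverse up to shift; and, more importantly, Artin--Grothendieck vanishing on the affine $\C^{n+1}$ bounds the \emph{range} of nonvanishing hypercohomology of a perverse sheaf but does not by itself deliver finite $\Q$-dimensionality. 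For that one also needs that the stalks of the relevant complex are $\Qt$-torsion (a nontrivial fact, due to Cappell--Shaneson and used in \cite{MT}), which you never invoke. I would suggest replacing the whole second half of your argument with the paper's $T_H$/Milnor-fiber argument: it kills the top-degree problem without the perverse-sheaf machinery, handles $0\le i\le n$ uniformly, and is what the paper itself uses to prove the twisted version.
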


Combining with theorem 3.2 yields the following.

\begin{cor} Suppose $H$ is a generic hyperplane to $V$. Then $rank_{\Qt}H_{n+1}(U;\Qt)=\mu$, the number of $n$-spheres in the bouquet describing the homotopy type of $V\setminus H$.
\end{cor}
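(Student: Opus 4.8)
The plan is to repeat, in dimension $n+1$, the Euler-characteristic bookkeeping already used for curves in the proof of Corollary 3.3. First I would collect the structural facts established above: since $U$ has the homotopy type of a finite CW-complex of real dimension $n+1$, we have $H_i(U;\Qt)=0$ for $i>n+1$ and $H_{n+1}(U;\Qt)$ is a free $\Qt$-module; by Theorem 4.2 the modules $H_i(U;\Qt)$ are torsion for $0\le i\le n$; and $H_0(U;\Qt)\cong\Qt/(t-1)$ is torsion as well. Consequently $rank_{\Qt}H_i(U;\Qt)=0$ for every $i\ne n+1$.

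Next I would extract $rank_{\Qt}H_{n+1}(U;\Qt)$ from the Euler characteristic. For the classical Alexander module one takes $\rho$ trivial one-dimensional and $\varepsilon=lk\#$, so the chain complex computing $H_*(U;\Qt)$ is, in each degree, free over $\Qt$ of rank equal to the number of cells of $U$ in that degree. Since $\Qt$ is a principal ideal domain and this complex is finitely generated free, tensoring with the fraction field $\Q(t)$ is exact, and the alternating sum of the $\Qt$-ranks of the $H_i(U;\Qt)$ equals the alternating sum of the cell counts, namely $\chi(U)$. Combining with the previous paragraph gives $(-1)^{n+1}rank_{\Qt}H_{n+1}(U;\Qt)=\chi(U)$.

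It remains to compute $\chi(U)$ geometrically. Under the identification $\C^{n+1}=\CP\setminus H$ one has the disjoint decomposition $\C^{n+1}=(V\setminus H)\sqcup U$, where $V\setminus H:=V\cap\C^{n+1}=V\setminus(V\cap H)$; by additivity of the Euler characteristic for complex algebraic varieties, $1=\chi(\C^{n+1})=\chi(V\setminus H)+\chi(U)$. By Theorem 3.2, $V\setminus H$ is homotopy equivalent to a bouquet of $\mu$ spheres of dimension $n$, so $\chi(V\setminus H)=1+(-1)^n\mu$, whence $\chi(U)=-(-1)^n\mu=(-1)^{n+1}\mu$. Comparing with the identity of the previous paragraph yields $rank_{\Qt}H_{n+1}(U;\Qt)=\mu$.

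I do not expect a genuine obstacle here; the argument is essentially formal once Theorems 3.2 and 4.2 are in hand, and mirrors Corollary 3.3. The only two points that deserve a word of care are (i) the identity between the alternating sum of $\Qt$-ranks of homology and $\chi(U)$, which rests on $\Qt$ being a PID and the relevant chain complex being finitely generated free, and (ii) the additivity of $\chi$ in the decomposition $\C^{n+1}=(V\setminus H)\sqcup U$, which is standard for complex algebraic varieties and is exactly the step invoked in the curve case.
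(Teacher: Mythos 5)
Your proposal is correct and is essentially the paper's own argument: the paper simply states that the proof is identical to the curve case (Corollary 3.3), i.e.\ torsionness of $H_i(U;\Qt)$ for $i\le n$ from Theorem 4.2, the rank--Euler characteristic identity, and additivity of $\chi$ together with the bouquet description of $V\setminus H$. Your sign bookkeeping $\chi(U)=(-1)^{n+1}\mu$ is also right, so nothing further is needed.
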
 

The proof of the above corollary is identical to the case of curve complements. The result below follows from our calculation on curves.

\begin{cor}
Let $V$ be a reduced complex projective hypersurface which is transversal to the hyperplane at infinity $H$. Then the first twisted Alexander module $\TA_1(U,\Ft)$ is torsion.
\end{cor}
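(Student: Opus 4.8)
The plan is to reduce the high-dimensional statement to the curve case already established in Theorem 3.4 (and Theorem 2.8) by a generic-hyperplane-section argument. Since $V$ is transversal to $H$, I would choose a generic $2$-plane $P \cong \C^2 \subset \C^{n+1} = \CP \setminus H$, so that $C := V \cap P$ is a reduced plane curve which is transversal to the line at infinity $L := H \cap \bar{P}$ inside $\bar{P} \cong \C\mathbb{P}^2$. By the Zariski–Lefschetz hyperplane theorem for hypersurface complements (in the form used repeatedly in \cite{DB}), the inclusion $P \setminus C \hookrightarrow U$ induces an isomorphism on $\pi_1$; in fact, by iterating one-dimensional Lefschetz sections, one sees $\pi_1(P \setminus C) \cong \pi_1(U)$ compatibly with the maps $\varepsilon$ and $\rho$, because the meridian generators $\nu_i$ of the components $V_i$ restrict to meridians of the components of $C$.

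The key point is that the twisted chain complex $\TC_*(X,\Ft)$ depends only on $\pi_1(X)$ together with its $2$-skeleton data, and more precisely $\TA_0$ and $\TA_1$ depend only on $\pi_1(X)$ with the maps $\varepsilon,\rho$. So I would argue as in the proof of Theorem 3.4: up to homotopy $U$ is built from $P \setminus C$ by attaching cells of dimension $\geq 2$, hence $\TA_1(P\setminus C,\Ft) \twoheadrightarrow \TA_1(U,\Ft)$ is surjective (and $\TA_0$ is even an isomorphism). Actually, since $\pi_1$ is unchanged, one gets directly that $\TA_1$ of the two spaces agree — the standard fact that $H_1$ with a local (or twisted) coefficient system depends only on the $2$-type, and here only on $\pi_1$ since we are in degree $1$. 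Then $C$ is a reduced plane curve transversal to its line at infinity, so Theorem 3.4 applies verbatim: $\TA_1(\mathbb{P}^2 \setminus C \cup L,\Ft) = \TA_1(P\setminus C,\Ft)$ is torsion over $\Ft$. Transporting this isomorphism back gives that $\TA_1(U,\Ft)$ is torsion.

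I would carry out the steps in this order: (1) pick a generic linear $2$-plane section and verify that transversality of $V$ to $H$ descends to transversality of $C$ to $L$; (2) invoke the Lefschetz-type theorem to get $\pi_1(P\setminus C) \xrightarrow{\sim} \pi_1(U)$ intertwining $\varepsilon$ and $\rho$; (3) deduce the isomorphism $\TA_1(P\setminus C,\Ft) \cong \TA_1(U,\Ft)$ of $\Ft$-modules from the $\pi_1$-isomorphism; (4) apply Theorem 3.4 to conclude torsionness.

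The main obstacle will be step (3): making precise and correct the claim that the first twisted Alexander module is determined by $\pi_1$ together with $\varepsilon$ and $\rho$. One must be careful because $\TA_1$ in general sees the attaching maps of $2$-cells, not just $\pi_1$; the clean statement is rather that if $f\colon Y \to X$ induces an isomorphism on $\pi_1$ then it induces a surjection on $\TA_1$ and an isomorphism on $\TA_0$ (as in Theorem 3.4), and a surjection from a torsion module is torsion. So I would phrase step (3) as a surjection $\TA_1(P\setminus C,\Ft) \twoheadrightarrow \TA_1(U,\Ft)$ rather than an isomorphism, which is all that is needed. A secondary point requiring care is that the generic $2$-plane section must also be generic with respect to the stratification of $V$ (not merely generic as a linear subspace) so that the classical transversality-at-infinity hypothesis for $C$ is genuinely met; this is guaranteed by a standard dimension count on the relevant Grassmannian.
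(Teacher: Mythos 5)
Your proposal is correct and takes essentially the same route as the paper: a generic $2$-plane Lefschetz section reduces to a curve transversal at infinity, and then Theorem 3.4 applies; the paper likewise invokes the Lefschetz hyperplane theorem to get an isomorphism on $\pi_1$ (and epimorphism on $\pi_2$), deduces surjectivity of $\TA_i(E\cap U,\Ft)\to\TA_i(U,\Ft)$ for $i\le 2$, and concludes torsionness. Your extra caution in step (3) is well placed but in fact unnecessary in degree $1$: since $H_1$ with local coefficients depends only on $\pi_1$ together with the local system (by the Lyndon--Hochschild--Serre edge map), the map on $\TA_1$ is actually an isomorphism, not merely a surjection; either way the conclusion follows.
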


\begin{proof}
This is a corollary of theorem 3.4 and the Lefschetz hyperplane theorem. Let $E$ be a 2-dimensional plane in $\CP$ which is generic to $V\cup H$. Then there is an epimorphism $\pi_2(E\cap U) \rightarrow \pi_2(U)$ and an isomorphism $\pi_1(E\cap U) \rightarrow \pi_1(U)$. Hence, $\TA_i(E\cap U,\Ft)$ surjects on $\TA_i(U,\Ft)$ for i=0,1,2.
\end{proof}

In the above statements, the assumption that $H$ is generic plays an important role. Moreover, if $H$ is generic, there are some other very interesting properties about classical Alexander modules (\cite{MT}). In short, the Alexander polynomials would divide the products of some `locally defined polynomials' and their roots are among $d$-th roots of unity.

So, starting from this section, we will assume $H$ to be generic to $V$. In particular, $H$ intersects $V$ transversally (also in the stratified sense). 

From lemma 1.5 in \cite{HG}, if $\dim Sing V \leq \dim V -2$ (which implies $V$ is irreducible), then $\pi_1(U) \cong \Z$. In this case, $\rho$ is abelidan and is determined by the image of the meridian around the hypersurface. We see that the twisted polynomials are determined by the classical ones (\cite{NV}). Therefore, the twisted modules are only interesting with non-abelian representation and when the singularities are in codimension 2 (e.g. $V$ is a hyperplane arrangement).

Finally, we state a result about the free rank of $\TA_{n+1}(U;\Ft)$ over $\Ft$.

\begin{prop}
If $\TA_i(U;\Ft)$ are torsion over $\Ft$ for $0\leq i\leq n$, then the rank  of $\TA_{n+1}(U;\Ft)$ over $\Ft$ is equal to $(-1)^{n+1}\chi(U)\cdot \dim_{\mathbb{F}}(\mathbb{V})$.
\end{prop}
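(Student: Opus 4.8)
The plan is to run the standard Euler characteristic bookkeeping on the twisted chain complex. Recall from Section~2 that, after fixing a finite CW structure on $U$ (which exists since $U$ is homotopy equivalent to a finite CW complex of real dimension $n+1$), the complex $\TC_*(U,\Ft)$ is a bounded complex of \emph{free} $\Ft$-modules, with a basis in degree $k$ given by the elements $1\otimes e_i\otimes c_k$, where $\{e_i\}$ runs over a basis of $\mathbb{V}$ and $\{c_k\}$ runs over the $k$-cells of $U$. Hence $\mathrm{rank}_{\Ft}\TC_k(U,\Ft) = \dim_{\mathbb{F}}(\mathbb{V})\cdot c_k(U)$, where $c_k(U)$ denotes the number of $k$-cells.

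First I would tensor with the fraction field $\mathbb{F}(t)$, which is flat over the principal ideal domain $\Ft$, so that $\mathrm{rank}_{\Ft}M = \dim_{\mathbb{F}(t)}(M\otimes_{\Ft}\mathbb{F}(t))$ for any finitely generated $\Ft$-module $M$, and $H_*\big(\TC_*(U,\Ft)\otimes\mathbb{F}(t)\big) \cong \TA_*(U,\Ft)\otimes\mathbb{F}(t)$. Over the field $\mathbb{F}(t)$ the alternating sum of dimensions of a bounded complex equals the alternating sum of dimensions of its homology; applying this to $\TC_*(U,\mathbb{F}(t))$ gives
\[
\sum_{k} (-1)^k \dim_{\mathbb{F}(t)}\big(\TC_k(U,\Ft)\otimes\mathbb{F}(t)\big) = \sum_k (-1)^k \mathrm{rank}_{\Ft}\TA_k(U,\Ft).
\]
The left-hand side equals $\dim_{\mathbb{F}}(\mathbb{V})\sum_k(-1)^k c_k(U) = \dim_{\mathbb{F}}(\mathbb{V})\cdot\chi(U)$.

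Next I would evaluate the right-hand side using the hypotheses. By assumption $\TA_i(U,\Ft)$ is torsion, hence of rank $0$, for $0\le i\le n$; and from the discussion preceding the statement, $\TA_i(U,\Ft)=0$ for $i>n+1$. Therefore the only possibly nonzero term on the right is $(-1)^{n+1}\mathrm{rank}_{\Ft}\TA_{n+1}(U,\Ft)$, and we conclude
\[
(-1)^{n+1}\mathrm{rank}_{\Ft}\TA_{n+1}(U,\Ft) = \chi(U)\cdot\dim_{\mathbb{F}}(\mathbb{V}),
\]
which rearranges to the asserted formula.

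There is essentially no hard step here: the argument is a formal consequence of the existence of a finite free chain complex computing $\TA_*(U,\Ft)$, together with additivity of Euler characteristics. The only points that deserve care are the choice of a finite CW model for $U$ and the verification that the twisted complex built from it is finitely generated and free — both recorded in Section~2 and at the start of this section — and the observation that the $\chi(U)$ appearing here is the topological Euler characteristic, which is legitimate because $\chi$ is a homotopy invariant and hence independent of the chosen CW structure. I expect this last bookkeeping point to be the only (very minor) subtlety.
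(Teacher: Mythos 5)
Your argument is correct and is essentially the same as the paper's: both compute the alternating sum of the ranks of the finite free twisted chain complex as $\chi(U)\cdot\dim_{\mathbb{F}}(\mathbb{V})$ and equate it with the alternating sum of ranks of the homology, which collapses to the degree-$(n+1)$ term under the torsion hypothesis. Your write-up is in fact more complete than the paper's, which only records the Euler characteristic computation and leaves the passage to the homology ranks implicit.
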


\begin{proof}
Let $\tilde{U}$ be the universal cover of $U$. Consider the complex $C_*(\tilde{U};\mathbb{F})$ of free $\Ft$-modules. $$\sum_{i=0}^{n+1}(-1)^i \text{rank}_{\Ft}(C_i(\tilde{U};\mathbb{F})) = \chi(U).$$ So, $$\sum_{i=0}^{n+1}(-1)^i \text{rank}_{\Ft}(\TC_i(U;\Ft)) = \chi(U)\cdot\dim_{\mathbb{F}}(\mathbb{V}).$$ 
\end{proof}

\subsection{Divisibility at infinity and Torsionness}

From now on, we will focus on the case where $\varepsilon=lk\#$. We will prove that under the assumption of transversality, most of the twisted Alexander modules are torsion. Let $T(H)$ be a small regular tubular neighborhood of $H$. Denote the space $T(H) \setminus V\cup H$ by $T_H$. Then by \cite{RF},

\begin{equation}\label{*}
\pi_i(T_H) \xrightarrow{i_*} \pi_i(U)
\end{equation}

is an isomorphism for $1\leq i \leq n-1$ and is an epimorphism for $i=n$. Hence, $\pi_i(U,T_H)=0$ for $i\leq n$. As a result, $U$ has the homotopy type of a CW-complex built from $T_H$ by attaching cells of dimension $n+1$.

We define the twisted Alexander modules of $T_H$ by composing $\varepsilon$ and $\rho$ with the inclusion map on fundamental groups. The composition with $\varepsilon$ is surjective by (\ref{*}). Let $(T_H)_{\infty}$ be the infinite cyclic cover of $T_H$ associated to this composition. Then $(T_H)_{\infty}$ is connected and can be thought as a subspace of $U_{\infty}$. Moreover, the module action is compatible with the inclusion map. Namely, the following diagram of $\Ft$-modules commutes:

$$\xymatrix{
\mathbb{V}\otimes_{\rho\circ i} C_*((T_H)_{\infty}) \ar[d]^t \ar[r]^i & \mathbb{V} \otimes_{\rho} C_*(U_{\infty})\ar[d]^t\\
\mathbb{V}\otimes_{\rho\circ i} C_*((T_H)_{\infty}) \ar[r]^i & \mathbb{V} \otimes_{\rho} C_*(U_{\infty})}$$

Let $v\otimes c\in \mathbb{V} \otimes C_*((T_H)_{\infty})$. Let a meridian around $H$, $\alpha\in \pi_1(T_H)$ gets mapped to $\beta\in \pi_1(U)$ by inclusion, then
\begin{align*}
t(i(v\otimes c))& =t(v \otimes i(c)) \\
                & =v \beta^{-1} \otimes \beta i(c)\\
                & = \rho(\beta^{-1})(v) \otimes \beta i(c) \\
                &= (\rho\circ i)(\alpha^{-1})(v) \otimes i(\alpha) i(c)  \\
                & =  \alpha v  \otimes i(\alpha c)\\
                & = i(\alpha v \otimes \alpha c) \\
                & = i(t(v\otimes c)).
\end{align*}

As a result, $$\TA_i(T_H,\Ft) \xrightarrow{i_*} \TA_i(U,\Ft)$$ is an isomorphism for $i\leq n-1$ and is an epimorphism for $i=n$.

\begin{cor}
If $H_i((T_H)_{\infty},\mathbb{V}_{\rho})$ is a torsion module over $\Ft$, then so is $\TA_i(U,\Ft)$.
\end{cor}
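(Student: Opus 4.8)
The plan is to deduce this corollary directly from the structural facts established in the preceding paragraphs, so the proof should be short. First I would recall what has already been set up: by the isomorphism \eqref{*} coming from \cite{RF}, the inclusion $T_H \hookrightarrow U$ induces an isomorphism on $\pi_i$ for $1 \le i \le n-1$ and an epimorphism for $i = n$, hence $U$ has the homotopy type of a CW-complex obtained from $T_H$ by attaching only cells of dimension $n+1$. Combined with the commutative diagram verified immediately above — showing that the inclusion $\mathbb{V}\otimes_{\rho\circ i} C_*((T_H)_\infty) \to \mathbb{V}\otimes_\rho C_*(U_\infty)$ is a map of $\Ft$-modules, i.e. commutes with the $t$-action — we obtain that on the level of twisted chain complexes $\TC_*(T_H,\Ft) \to \TC_*(U,\Ft)$ is an inclusion whose cokernel is a complex of free $\Ft$-modules concentrated in degree $n+1$. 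Taking homology of the resulting short exact sequence of complexes gives that $\TA_i(T_H,\Ft) \to \TA_i(U,\Ft)$ is an isomorphism for $i \le n-1$ and an epimorphism for $i = n$ — this is exactly the displayed conclusion already stated right before the corollary.

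Next I would identify $\TA_i(T_H,\Ft)$ with $H_i((T_H)_\infty, \mathbb{V}_\rho)$. This is precisely the equivalent description of the twisted chain complex recalled in Section 2 (Theorem 2.1 of \cite{KL}): since the composition $\pi_1(T_H) \to \Z$ is surjective by \eqref{*}, the infinite cyclic cover $(T_H)_\infty$ is connected, and $\TC_*(T_H,\Ft) \cong \mathbb{V}\otimes_{\mathbb{F}[\pi_1(T_H)']} C_*(\widetilde{T_H}) = C_*((T_H)_\infty; \mathbb{V}_\rho)$ as $\Ft$-modules, whence $\TA_i(T_H,\Ft) \cong H_i((T_H)_\infty, \mathbb{V}_\rho)$.

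Finally, the torsion conclusion: assume $H_i((T_H)_\infty, \mathbb{V}_\rho)$ is torsion over $\Ft$. If $i \le n-1$, then $\TA_i(U,\Ft) \cong \TA_i(T_H,\Ft) \cong H_i((T_H)_\infty,\mathbb{V}_\rho)$ is torsion. If $i = n$, then $\TA_n(U,\Ft)$ is a quotient of $\TA_n(T_H,\Ft) \cong H_n((T_H)_\infty,\mathbb{V}_\rho)$; a quotient of a torsion module over the PID $\Ft$ is again torsion, so $\TA_n(U,\Ft)$ is torsion. (For $i \ge n+1$ there is nothing in the hypothesis, and indeed these modules are the ones carrying the free rank computed in Proposition 4.5, so the statement is genuinely about $i \le n$.) I do not expect a real obstacle here: the entire content has been front-loaded into the isomorphism/epimorphism statement preceding the corollary and the Theorem 2.1 identification, so the only thing to be careful about is the $i=n$ case, where one must invoke that $\Ft$ is a PID to pass torsionness through the surjection rather than claim an isomorphism.
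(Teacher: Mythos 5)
Your proof is correct and follows essentially the same route as the paper: identify $\TA_i(T_H,\Ft)$ with $H_i((T_H)_\infty,\mathbb{V}_\rho)$ via the Kirk--Livingston description, then use that the inclusion induces an isomorphism on twisted Alexander modules for $i\leq n-1$ and an epimorphism for $i=n$, noting that a quotient of a torsion $\Ft$-module is torsion. The only minor imprecision is that the attached cells have dimension $\geq n+1$ rather than exactly $n+1$, but this does not affect the isomorphism/epimorphism range or the conclusion.
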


\begin{thm}
$H_i^{\rho}(U,\Ft)$ are torsion $\Ft$-modules, for $0\leq i\leq n$.
\end{thm}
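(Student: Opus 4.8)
The plan is to reduce the assertion, via Corollary 4.11, to showing that $H_i^{\rho}((T_H)_\infty,\mathbb{V}_\rho)$ is torsion over $\Ft$ for $0\le i\le n$; equivalently, that $H_i^{\rho}(T_H,\Ft)$ is torsion in that range. The key geometric input is that $H$ is generic to $V$, so that the intersection of $V$ with $H$ is again transversal and the link of $H$ in $\CP$ relative to $V$ has a product-like structure: $T_H$ deformation retracts onto a circle bundle (the boundary of the tubular neighborhood $\partial T(H)$, with $V$ removed) over $H\setminus (V\cap H)$. Concretely, $\partial T(H)\setminus V$ fibers over $H\setminus(V\cap H)$ with fiber $S^1$, and since $V\cap H$ is a transversal hyperplane section of $V$ inside $H\cong\CP[n]$, the base $H\setminus(V\cap H)$ is itself the complement of a degree-$d$ hypersurface in $\mathbb{P}^n$ — i.e. a space of the same type as $U$ but one dimension lower. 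I would first make this fibration and the identification of the base precise, and record that the monodromy of the $S^1$-bundle, together with the meridian of $H$, pushes forward compatibly with $\varepsilon=lk\#$ (so $\varepsilon$ of the $S^1$-fiber class is $d$, coming from $-dH$, which is nonzero — this is the analogue of the condition $\varepsilon(x_0)\ne 0$ in Theorem 2.8).

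Next I would set up the homology spectral sequence (or the Wang-type long exact sequence for the infinite cyclic cover) of this $S^1$-bundle with coefficients in $\mathbb{V}_\rho$, regarded throughout as a sequence of $\Ft$-modules. The $E^2$-page is $H_p(H\setminus(V\cap H); \mathcal{H})$ where $\mathcal{H}$ is the (twisted, $\Ft$-coefficient) homology of the $S^1$-fiber; because the fiber is a circle this collapses quickly and expresses $H_i^{\rho}(T_H,\Ft)$ in terms of twisted Alexander modules of the lower-dimensional complement $H\setminus(V\cap H)$. The crucial point is that tensoring with $\Ft$ over the fiber class contributes a factor whose order is $\det(\rho(\text{fiber})\,t^{d}-\mathrm{Id})$, a nonzero element of $\Ft$, so the $\Ft$-rank is not increased by the circle direction. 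This lets me induct on $n$: the base case $n=0$ is trivial (or $n=1$, where $H\setminus(V\cap H)$ is an affine curve complement and Theorem 2.8 / Theorem 3.4 apply), and the inductive step says that if all twisted Alexander modules of degree $\le n-1$ of a hypersurface complement in $\mathbb{P}^n$ are torsion, then those of degree $\le n$ of $T_H$ — and hence of $U$, by Corollary 4.11 — are torsion.

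An alternative, and perhaps cleaner, route is to mimic Remark 3.5 directly: identify $(T_H)_\infty$ up to homotopy with (a bundle over) the Milnor fiber at infinity of $f$, i.e. the global Milnor fiber $F=\{f=1\}\subset\C^{n+1}$, which under the transversality hypothesis has, by the theorem of Dimca–Papadima type cited as Theorem 3.3 (applied to $V\setminus H$), the homotopy type of a finite bouquet of $n$-spheres. Then $H_i(F;\mathbb{V}_\rho)$ is a finite-dimensional $\mathbb{F}$-vector space for $i\le n$ and vanishes in higher degrees, hence is automatically $\Ft$-torsion, and transporting this through the $\Ft$-module structure on $(T_H)_\infty$ gives the claim; the torsion at the top degree $n$ of $(T_H)_\infty$ is the part that requires the finiteness of $H_n(F)$, which is exactly the bouquet-of-spheres statement.

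The main obstacle I expect is not the algebra but pinning down the homotopy model of $T_H$ precisely enough: one must verify that the generic-hyperplane hypothesis yields the claimed $S^1$-bundle structure on $\partial T(H)\setminus V$ over the lower-dimensional complement (this uses transversality of $V\cap H$ in $H$ in the stratified sense, essentially the content of \cite{RF}), and that the $\Ft$-module structure coming from $lk\#$ restricts correctly — i.e. that the fiber class of the bundle maps to the class whose linking number with $V\cup-dH$ is $d\ne 0$. Once that identification is in hand, the torsion conclusion in degrees $0\le i\le n$ follows formally, either by the spectral-sequence induction or by the Milnor-fiber finiteness argument.
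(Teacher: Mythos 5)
Your second route is, in outline, the paper's own proof: reduce via the preceding corollary to showing that $H_i((T_H)_{\infty},\mathbb{V}_{\rho})$ is a finite-dimensional $\mathbb{F}$-vector space, and obtain that finiteness from a Milnor fibration. But the identification you propose for $(T_H)_{\infty}$ is the one step that carries the whole argument, and as stated it is wrong. The infinite cyclic cover $(T_H)_{\infty}$ is \emph{not} the affine fiber $\{f=1\}\subset\C^{n+1}$. What is true (by \cite{RF}, \cite{MT}) is that $T_H$ is homotopy equivalent to the complement in $\C^{n+1}$ of the \emph{cone over $V\cap H$}, i.e.\ of $\{g=0\}$ where $g$ is the leading homogeneous form of $f$ (equivalently, the restriction to $H$ of the homogeneous equation of $V$). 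Since $g$ is homogeneous, this complement retracts onto $S^{2n+1}\setminus\{g=0\}$, the global Milnor fibration of $g$ applies, and for $\varepsilon=lk\#$ one gets $(T_H)_{\infty}\cong F_g=\{g=1\}$. The space $\{f=1\}$ is a genuinely different object: $f\colon U\to\C^*$ is not a locally trivial fibration in general, so neither $U_{\infty}$ nor $(T_H)_{\infty}$ need have the homotopy type of a generic affine fiber of $f$, and even under transversality at infinity $\{f=1\}$ and $\{g=1\}$ differ. Relatedly, Theorem 3.2 (Dimca--Papadima) is a statement about $V\setminus H$, the affine part of the hypersurface itself, not about any Milnor fiber of $f$ or $g$; invoking it here is a misapplication, and it is also unnecessary --- all the proof needs is Milnor's theorem that $F_g$ has the homotopy type of a finite CW-complex of dimension $n$, whence $H_i(F_g;\mathbb{V}_{\rho})$ is finite-dimensional over $\mathbb{F}$ for every $i$ and therefore torsion over $\Ft$.

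Your first route is not needed, but it is the correct geometric picture behind the identification: the Hopf map exhibits $\C^{n+1}\setminus\{g=0\}$ as a $\C^*$-bundle over $\mathbb{P}^n\setminus\{[g]=0\}=H\setminus(V\cap H)$, the fiber class has $\varepsilon$-value $\pm d\neq 0$, and unwinding that circle direction is exactly what produces $F_g$ (a $d$-fold cyclic cover of the base) as the infinite cyclic cover; no induction on $n$ is required once one has this. In short: replace $\{f=1\}$ by the Milnor fiber of the cone over $V\cap H$ and the bouquet theorem by Milnor's finiteness theorem, and your argument becomes the paper's.
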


\begin{proof}
By \cite{RF}, \cite{MT}, $T_H$ is homotopy equivalent to the complement of the cone over $V\cap H$ in $\C^{n+1}$. For instance, if $V$ is defined by $f(x_0,...,x_n)=0$ and $H$ is defined by $x_0=0$, then the cone is defined by $g:= f(0,x_1,...,x_n)=0$. Furthermore, the latter space is homotopy equivalent to $S^{2n+1} \setminus \{g=0\}$, because $g$ is homogeneous. On the other hand, with the existence of Milnor fibration: $$F_g \rightarrow S^{2n+1} \setminus \{g=0\} \rightarrow S^1,$$ where $F_g$ is the Milnor fiber of the function $g$, when $\varepsilon=lk\#$, $(T_H)_{\infty}\cong F_g$. Because $F_g$ is a finite $n$-dimensional CW-complex, $H_i(F_g,\mathbb{V}_{\rho})$ is a finite vector space and a torsion $\Ft$-module.
\end{proof}

\subsection{Relation to local systems on $U$ with stalk $\mathbb{V}$}

We obtain here some combinatorial upper bounds on the multiplicities of roots of the twisted Alexander polynomials for hyperplane arrangements, similar to the work in \cite{DN} section 4.

Let $a\in \C^*$ and consider the rank-one local system $\mathcal{L}_a$ on $U$ defined by $$\pi_1(U) \xrightarrow{\varepsilon} \Z \xrightarrow{1\mapsto a} \C^*.$$

Then we have the short exact sequence tensoring with $\mathbb{V}$, $$0\rightarrow \Ct\otimes\mathbb{V} \xrightarrow{(t-a)I} \Ct\otimes \mathbb{V} \rightarrow \mathcal{L}_a\otimes\mathbb{V} \rightarrow 0.$$

There is an induced short exact sequence of the complexes $$0\rightarrow \TC_*(U,\Ct) \rightarrow  \TC_*(U,\Ct) \rightarrow C_*(U,\mathcal{L}_a\otimes\mathbb{V}_{\rho}) \rightarrow 0$$ which yields the Milnor long exact sequence $$...\rightarrow H_i^{\rho}(U,\Ct) \xrightarrow{(t-a)^r} H_i^{\rho}(U,\Ct) \rightarrow H_i(U,\mathcal{L}_a\otimes\mathbb{V}_{\rho}) \rightarrow ...$$

Since we know $H_i(U,\Ct)$ is torsion for $i\leq n$, exactness of the sequence shows  $$\dim_{\C}H_i(U,\mathcal{L}_a\otimes\mathbb{V}_{\rho}) = \dim(Ker(t-a)^r:H_i \rightarrow H_i)  + \dim(Ker(t-a)^r:H_{i-1} \rightarrow H_{i-1}) $$

\begin{prop}
$\dim_{\C}H_i(U,\mathcal{L}_a\otimes\mathbb{V}_{\rho}) \geq N(a,i)+ N(a,i-1)$, where $N(a,q)$ denotes the power of $(t-a)$ in $\Delta_q^{\rho}$.
\end{prop}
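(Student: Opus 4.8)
The plan is to extract the claimed inequality directly from the dimension formula established just above the statement, namely
$$\dim_{\C}H_i(U,\mathcal{L}_a\otimes\mathbb{V}_{\rho}) = \dim\bigl(\mathrm{Ker}(t-a)^r \colon H_i^{\rho}(U,\Ct) \to H_i^{\rho}(U,\Ct)\bigr) + \dim\bigl(\mathrm{Ker}(t-a)^r \colon H_{i-1}^{\rho}(U,\Ct) \to H_{i-1}^{\rho}(U,\Ct)\bigr).$$
So the only thing left to prove is that for each torsion index $q$ one has $\dim_{\C}\bigl(\mathrm{Ker}(t-a)^r \colon H_q^{\rho}(U,\Ct) \to H_q^{\rho}(U,\Ct)\bigr) \geq N(a,q)$, where $N(a,q)$ is the exponent of $(t-a)$ in $\Delta_q^{\rho}$.

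First I would recall that since $\Ct$ is a principal ideal domain and $H_q^{\rho}(U,\Ct)$ is a finitely generated torsion module for $q\le n$ (by Theorem~4.10, or Theorem~4.3 in the transversal case; in any case this is the standing hypothesis under which $N(a,q)$ and $\Delta_q^{\rho}$ are defined), the structure theorem gives a decomposition
$$H_q^{\rho}(U,\Ct) \cong \bigoplus_{j} \Ct/(p_j^{e_j})$$
into cyclic modules over prime powers, with $\Delta_q^{\rho} = \prod_j p_j^{e_j}$ up to a unit. Then $N(a,q)$ is precisely the sum of the $e_j$ over those $j$ for which $p_j$ is (a unit multiple of) $t-a$. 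For such a summand $\Ct/((t-a)^{e})$, multiplication by $(t-a)^r$ has kernel equal to $(t-a)^{\max(e-r,0)}\Ct/((t-a)^{e})\Ct$, which has $\C$-dimension $\min(e,r)$. Summing over all cyclic summands, and noting that summands with $p_j$ coprime to $t-a$ contribute $0$ to the kernel of $(t-a)^r$, I get
$$\dim_{\C}\bigl(\mathrm{Ker}(t-a)^r \colon H_q^{\rho}(U,\Ct) \to H_q^{\rho}(U,\Ct)\bigr) = \sum_{j \,:\, p_j \sim t-a} \min(e_j, r).$$

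To finish, I would argue that $r$ may be taken large enough that $\min(e_j,r) = e_j$ for every relevant $j$: in the Milnor sequence one is free to take $r$ (the free rank of $\mathbb{V}$, or more precisely the exponent in $(t-a)^r$ appearing there) at least as large as $\max_j e_j = N(a,q)$ — indeed the short exact sequence $0 \to \Ct\otimes\mathbb{V} \xrightarrow{(t-a)I} \Ct\otimes\mathbb{V} \to \mathcal{L}_a\otimes\mathbb{V} \to 0$ can be iterated, or one simply notes that the ``$(t-a)^r$'' in the displayed long exact sequence really records multiplication on a rank-$r$ free module whose $r$ we control. With that choice, $\sum_{j:p_j\sim t-a}\min(e_j,r) = \sum_{j:p_j\sim t-a} e_j = N(a,q)$, giving the desired lower bound for both $q=i$ and $q=i-1$, and the proposition follows by adding the two contributions in the dimension formula.

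The step I expect to require the most care is the bookkeeping of the exponent $r$: one must be sure that the local system computation genuinely allows $r \geq N(a,i) + N(a,i-1)$ (or can be run with $r$ taken as large as needed), so that every relevant cyclic summand is entirely killed rather than only partially. If instead $r$ is forced to be small (the literal rank of $\mathbb{V}$), then the clean statement would become $\dim_{\C}H_i(U,\mathcal{L}_a\otimes\mathbb{V}_{\rho}) \geq \min(N(a,i),r) + \min(N(a,i-1),r)$, and one would need to remark that the stated inequality is the $r \gg 0$ form; I would check the conventions in Section~4 of \cite{DN} to make the hypothesis on $r$ explicit and state it alongside the proposition.
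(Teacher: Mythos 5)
Your overall framework --- the Milnor long exact sequence combined with the structure theorem for finitely generated torsion modules over the PID $\Ct$ --- is exactly what the paper intends (the paper offers no argument beyond the displayed dimension formula), and your computation $\dim_{\C}\ker\bigl((t-a)^s\colon \Ct/((t-a)^e)\to\Ct/((t-a)^e)\bigr)=\min(e,s)$ is correct. The gap is the step where you ``take $r$ large enough.'' The exponent is not a free parameter: the map in the coefficient short exact sequence is multiplication by the matrix $(t-a)I_r$, which acts on $\TC_*(U,\Ct)$, and hence on each $H_q^{\rho}(U,\Ct)$, as multiplication by the scalar $t-a$ --- to the first power, whatever $r=\dim_{\mathbb{F}}\mathbb{V}$ is. (The ``$(t-a)^r$'' in the paper's display is the determinant of that matrix, not the induced map on homology.) Iterating the sequence does not help either: replacing $t-a$ by $(t-a)^N$ in the coefficient sequence changes the quotient from $\mathcal{L}_a\otimes\mathbb{V}$ to a different, non-semisimple, local system, so you would no longer be computing $H_i(U,\mathcal{L}_a\otimes\mathbb{V}_{\rho})$.

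With the correct exponent $1$, your own formula gives
$$\dim_{\C}H_i(U,\mathcal{L}_a\otimes\mathbb{V}_{\rho})=m_i(a)+m_{i-1}(a),$$
where $m_q(a)$ denotes the number of cyclic summands of $H_q^{\rho}(U,\Ct)$ of the form $\Ct/((t-a)^{e})$ with $e\ge 1$. Since $m_q(a)=\sum_j 1\le\sum_j e_j=N(a,q)$, this produces an inequality in the direction opposite to the one claimed; the proposition as literally stated is recovered only when every elementary divisor at $t-a$ has exponent one, i.e., when the $(t-a)$-primary parts of the twisted Alexander modules are semisimple. This matches \cite{DN}, where the quantity playing the role of $N(a,q)$ is the number of $(t-a)$-primary direct summands rather than the full multiplicity of $t-a$ in $\Delta_q^{\rho}$; with that reading the statement is in fact an equality, and your argument (run with exponent $1$) proves it. So the correct repair is not to enlarge $r$, but either to replace $N(a,q)$ by $m_q(a)$ or to add the semisimplicity hypothesis explicitly.
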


In the case of hyperplane arrangements, since the cohomology groups of hyperplane arrangement complements are combinatorial, more precisely, depending on the intersection lattice, we can give combinatorial upper bounds for multipilities of roots of twisted Alexander polynomials.

\begin{cor}
Suppose $U$ is a hyperplane arrangement complement in $\C^{n+1}$. Denote by $N(a,q)$ the power of $(t-a)$ in $\Delta_q^{\rho}$. Then for $i\leq n$, $$N(a,i)+N(a,i-1) \leq rb_i(U).$$
\end{cor}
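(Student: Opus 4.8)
The plan is to combine Proposition 4.11 with the well-known bound on the Betti numbers of a local system on a hyperplane arrangement complement coming from the Orlik--Solomon algebra. First I would recall that for a hyperplane arrangement complement $U \subset \C^{n+1}$, the group cohomology $H^*(\pi_1(U))$ and more generally the cohomology $H^*(U,\mathcal{L})$ of any rank-one local system $\mathcal{L}$ is controlled by the Aomoto complex / the Orlik--Solomon algebra, which is purely combinatorial in the intersection lattice. The key inequality I will invoke is $\dim_{\C} H^i(U,\mathcal{L}_a) \leq b_i(U)$ for every $a \in \C^*$; this is a standard semicontinuity statement (the generic local system has the smallest Betti numbers? — no, the other way: the trivial/constant sheaf has cohomology $H^i(U,\C)$ of dimension $b_i(U)$, and every rank-one local system has $\dim H^i(U,\mathcal{L}_a) \le b_i(U)$ by the structure of the Aomoto complex, since $\mathcal{L}_a$-cohomology is computed by a complex whose terms are the same $H^i(U,\C)$ with a differential, so its cohomology can only drop in dimension). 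For the local system $\mathcal{L}_a \otimes \mathbb{V}_\rho$ of rank $r = \dim_{\mathbb{F}} \mathbb{V}$, one then gets $\dim_{\C} H^i(U,\mathcal{L}_a \otimes \mathbb{V}_\rho) \le r\, b_i(U)$, by, e.g., passing to a composition series of $\mathbb{V}_\rho$ restricted along $\varepsilon$ or simply bounding each graded piece.

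Next I would translate between homology and cohomology: since $U$ is a finite CW-complex and the coefficients are finite-dimensional over $\C$, universal coefficients gives $\dim_{\C} H_i(U, \mathcal{L}_a \otimes \mathbb{V}_\rho) = \dim_{\C} H^i(U, \mathcal{L}_a^{\vee} \otimes \mathbb{V}_\rho^{\vee})$, and the dual local system is again of the same type with the same underlying combinatorics, so the bound $\dim_{\C} H_i(U, \mathcal{L}_a \otimes \mathbb{V}_\rho) \le r\, b_i(U)$ holds for all $i$. Combining this with Proposition 4.11, which gives $N(a,i) + N(a,i-1) \le \dim_{\C} H_i(U,\mathcal{L}_a \otimes \mathbb{V}_\rho)$ for $i \le n$, immediately yields $N(a,i) + N(a,i-1) \le r\, b_i(U)$, which is the asserted inequality.

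The main obstacle I anticipate is making precise and correctly attributing the inequality $\dim H^i(U,\mathcal{L}) \le b_i(U)$ for a general (not necessarily generic) rank-one local system on an arrangement complement, and then handling the rank-$r$ twisted coefficients $\mathcal{L}_a \otimes \mathbb{V}_\rho$ rather than a genuine rank-one system. For the rank-one part one cites the Aomoto/Orlik--Solomon description (e.g. the Esnault--Schechtman--Viehweg or Schechtman--Terao--Varchenko framework, as used in \cite{DN}); the point is that $H^*(U,\mathcal{L}_a)$ is the cohomology of a complex $(A^\bullet(U), a\wedge -)$ whose terms $A^i(U)$ have dimension $b_i(U)$, hence each cohomology group has dimension at most $b_i(U)$. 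For the rank-$r$ case one notes that $\varepsilon$ factors the representation defining $\mathcal{L}_a \otimes \mathbb{V}_\rho$ is not itself diagonal, but the underlying local system still has a finite filtration whose graded pieces are rank-one systems of the form $\mathcal{L}_{a\mu}$ for $\mu$ an eigenvalue-type factor — more simply, one can argue directly that the twisted Aomoto-type complex computing $H^*(U, \mathcal{L}_a \otimes \mathbb{V}_\rho)$ has terms $A^i(U) \otimes \mathbb{V}$ of dimension $r\, b_i(U)$, giving the bound at once. Once this structural input is in place, the rest is a one-line combination with Proposition 4.11.
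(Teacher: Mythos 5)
Your overall strategy coincides with the paper's: combine the lower bound from Proposition~4.11 with an upper bound $\dim_{\C}H_i(U,\mathcal{L}_a\otimes\mathbb{V}_{\rho}) \leq r\, b_i(U)$. The combination step is correct and immediate. However, your justification of the key upper bound is not right. You claim that $H^*(U,\mathcal{L}_a)$ is computed by the Aomoto/Orlik--Solomon complex $(A^\bullet(U), a\wedge -)$, whose terms have dimensions $b_i(U)$. This identification is \emph{false} for a general rank-one local system on an arrangement complement: the Aomoto complex computes $H^*(U,\mathcal{L}_a)$ only under suitable non-resonance conditions (Esnault--Schechtman--Viehweg and its refinements), and there are well-known examples where the two disagree. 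The ``semicontinuity'' argument you sketch also does not close this gap: upper semicontinuity of $\dim H^i(U,\mathcal{L}_a)$ on the character torus allows the dimension to jump \emph{up} at special local systems, and the trivial local system is one such special point, so nothing a priori forces every $\mathcal{L}_a$ to satisfy $\dim H^i \leq b_i(U)$ on those grounds alone. Finally, the suggested reduction via a composition series of $\mathbb{V}_\rho$ is not available in general, since $\rho$ need not be triangularizable and $\mathcal{L}_a \otimes \mathbb{V}_\rho$ is not an iterated extension of rank-one systems.

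The correct mechanism, which is the one the paper invokes, is the \emph{minimal CW structure} on hyperplane arrangement complements (Dimca--Papadima, Randell): $U$ admits a finite CW structure with exactly $b_i(U)$ cells in dimension $i$. For any local system of rank $r$ on a finite CW complex the cellular chain complex has $i$-th term of dimension $r\cdot c_i$, where $c_i$ is the number of $i$-cells, giving $\dim_{\C}H_i(U,\mathcal{L}_a\otimes\mathbb{V}_{\rho}) \leq r\,c_i(U)$; minimality turns $c_i(U)$ into $b_i(U)$. This is a statement about the cellular chain complex of $U$ itself, not about the combinatorial Orlik--Solomon complex, and it holds for every local system without any resonance hypothesis. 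Once you replace the Aomoto-complex argument with this fact, the rest of your proof goes through unchanged.
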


This is due to the existence of a minimal cell structure on $U$. In general, $$\dim_{\C}H_i(U,\mathcal{L}_a\otimes\mathbb{V}_{\rho}) \leq r\cdot c_i(U),$$ where $c_i(U)$ is the number of cells of $U$ in the minimal cell structure.

\section{Divisibility theorem of Twisted Alexander polynomials}

\subsection{Topology of tubular neighborhood of components of $V$}

Let $V_1$ be a component of $V$. Let $T(V_1)$ be a tubular neighborhood of $V_1$ in $\CP$ (for an explicit construction of such tubular neighborhood, one can see \cite{DB} page 150). By a Bertini's theorem type argument, $T(V_1)$ contains a generic hypersurface $W$ of dimension $n$ and the same degree as $V_1$, which is transversal to $V\cup H$ (see proof of theorem 4.3 in \cite{HG} for the irreducible case). So, by Lefschetz theorem, the map on homotopy groups induced by inclusion:
$$\pi_i(W \cap (\CP\setminus (V\cup H))) \rightarrow \pi_i (\CP \setminus (V\cup H))$$ is an isomorphism for $i \leq n-1$ and is an epimorphism for $i=n$.

Let $T=T(V_1) \setminus (V\cup H)$. Then the above maps factor through $\pi_i(T)$ as:
$$\pi_i(W \cap (\CP\setminus (V\cup H))) \rightarrow \pi_i(T) \rightarrow \pi_i (U)$$ 
So,
\begin{equation}\label{**}
\pi_i(T) \xrightarrow{i_*} \pi_i(U)
\end{equation}

is an isomorphism for $i \leq n-1$ and is an epimorphism for $i=n$.

Following the argument in \cite{MT},\cite{L2}, we have $\pi_i(U,T) =0$ for $i\leq n$. Hence, $U$ has the homotopy type of $T$ with some cells of dimension $n+1$ or higher attached. So the low dimensional skeleton of $U$ and $T$ are homotopic.

Similar to the tubular neighborhood at infinity, we consider the twisted Alexander modules of $T$ corresponding to the representation of $\pi_1(T)$ obtained by composing $\varepsilon$ and $\rho$ with the inclusion map on fundamental groups. The composition with $\varepsilon$ is still surjective because of (\ref{**}). Let $T_{\infty}$ be the infinite cyclic cover of $T$ associated to this composition. Then $T_{\infty}$ is connected and can be thought as a subspace of $U_{\infty}$. Moreover, the module action is compatible with the inclusion map. 

Recall that $U$ has the homotopy type of $T$ with some cells of dimension $n+1$ or higher attached. So, the map induced by inclusions $$\TA_i(T;\Ft) \rightarrow \TA_i(U;\Ft)$$ is an isomorphism for $i\leq n-1$ and is an epimorphism for $i=n$.

To study the twisted Alexander modules of $V$, we may focus on the tubular neighborhood of only one of the components. 

\begin{cor}
Let  $V_{k_1}$, $V_{k_2}$,...,$V_{k_j}$ be components of $V$ such that the twisted Alexander modules of their tubular neigborhood are torsion. Then $\Delta_i^{\varepsilon,\rho}(t)$ divides the greatest common divisor of the twisted Alexander of polynomials of $V_{k_1}$, $V_{k_2}$,...,$V_{k_j}$.
\end{cor}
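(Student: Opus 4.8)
The plan is to reduce the divisibility to a statement about the maps induced by inclusion of tubular neighborhoods, which has already been set up in the preceding subsection. The key observation is that for each component $V_{k_m}$ whose tubular neighborhood complement $T=T(V_{k_m})\setminus(V\cup H)$ has torsion twisted Alexander modules, the inclusion-induced map $\TA_i(T;\Ft)\to\TA_i(U;\Ft)$ is an isomorphism for $i\leq n-1$ and an epimorphism for $i=n$. First I would fix an index $i$ in the relevant range and recall that, since $\Ft$ is a PID, the order of a torsion finitely generated $\Ft$-module behaves well under surjections: if $M\twoheadrightarrow N$ is a surjection of torsion f.g. $\Ft$-modules, then $\mathrm{ord}(N)$ divides $\mathrm{ord}(M)$.

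Next I would apply this to each $m=1,\dots,j$ separately. For $i\leq n-1$ the map $\TA_i(T(V_{k_m}))\to\TA_i(U)$ is an isomorphism, so $\Delta_i^{\varepsilon,\rho}(t)$ equals (up to a unit) the $i$-th twisted Alexander polynomial of the tubular neighborhood of $V_{k_m}$; for $i=n$ it is a surjection, so $\Delta_n^{\varepsilon,\rho}(t)$ divides the $n$-th twisted Alexander polynomial of that neighborhood. In either case, $\Delta_i^{\varepsilon,\rho}(t)$ divides the $i$-th twisted Alexander polynomial of $V_{k_m}$ for every $m$. Since this holds for each $m$, the polynomial $\Delta_i^{\varepsilon,\rho}(t)$ is a common divisor of the family $\{\,\Delta_i(T(V_{k_m}))\,\}_{m=1}^j$, hence divides their greatest common divisor. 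One should also note that the hypothesis guarantees $\TA_i(U;\Ft)$ itself is torsion (being the image of a torsion module), so $\Delta_i^{\varepsilon,\rho}(t)$ is genuinely defined.

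The only real subtlety, and the step I expect to require the most care, is the case $i=n$: there the map is merely an epimorphism, not an isomorphism, and one must be careful that $\TA_n(U;\Ft)$ is torsion before even speaking of $\Delta_n^{\varepsilon,\rho}(t)$. This follows because a quotient of a torsion module is torsion, but it is worth stating explicitly. For $i=n+1$ the statement is vacuous or requires separate treatment since $\TA_{n+1}(U;\Ft)$ is generally free (by the Stein-manifold dimension bound recalled in Section 4), so the corollary should be read as applying in the range $0\leq i\leq n$. I would also remark that the torsionness hypothesis on even a single component's tubular neighborhood already forces $\TA_i(U;\Ft)$ to be torsion for $i\leq n$, which is exactly the mechanism exploited in the transversal-at-infinity case via $T_H$; the present corollary simply repackages that argument with an arbitrary component in place of the hyperplane at infinity and extracts the divisibility consequence.
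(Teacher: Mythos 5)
Your proof is correct and takes essentially the same route the paper intends: it is an immediate consequence of the preceding observation that $\TA_i(T(V_{k_m});\Ft)\to\TA_i(U;\Ft)$ is an isomorphism for $i\le n-1$ and an epimorphism for $i=n$, combined with the PID fact that the order of a torsion quotient divides the order of the source. Your remarks on the $i=n$ case and the vacuity at $i=n+1$ are accurate caveats, but they do not change the argument.
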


\subsection{Stratification of the hypersurface, partition of a tubular neighborhood, and local link pairs}

An advantage of studying a tubular neighborhood is the possibility writing itself as an union of some topoloically locally trivial spaces. 

Fixing a Whitney regular stratification of $V$, one can give a partition of $T$ consisting of total spaces of some locally trivial fibration, where the base spaces are strata in $V_1$. Libgober has described the stratification in detail for the special case of hyperplane arrangements in \cite{EM}.

Fix a Whitney regular stratification of $V$. The tubular neighborhood $T(V_1)\setminus V$ in $\CP$ can be decomposed into subsets; each of which corresponds to one strata in $V_1$, in such a way that it fibers over the stratum. Denote $\mathbf{X}_j^k$ as the $j$-th $k$ dimensional stratum in $V_1$ and $\mathbf{Y}_j^k$ as the associated subspace of the tubular neighborhood. For example, if $V$ has only isolated singularities, $\mathbf{X}^n$ will be the smooth part of $V$ and $\mathbf{X}_j^0$ can be regarded as the $j$-th singular point. Since the stratification of $V$ makes $V$ into a pseudomanifold, it is known that the fiber is given by the complement of $V$ in a small disk $\mathbb{D}_j^{2n-2k+2}$ in $\CP$ with codimension $k$ which is transversal to $\mathbf{X}_j^k$. Moreover, $\mathbb{D}_j^{2n-2k+2} \cap V$ is  homeomorphic to a cone of a $2n-2k-1$ dimensional closed manifold $K_j^{2n-2k-1}$, which is usually refered as the link of the stratum $\mathbf{X}_j^k$.

In other words, the pair $(\mathbb{D}_j^{2n-2k+2}, \mathbb{D}\cap V)$ is homeomorphic to the cone on $(S_j^{2n-2k+1}, K_j)$. Furthermore, we may assume that all the link pairs satisfy the Milnor fibration theorem.

Recall that we assume $H$ to be a generic hyperplane relative to $V$, so $H$ intersects all strata of $V$ transversally. The transversality condition of $H$ implies that the following maps remain locally trivial:

$$\xymatrix{
S_j^{2n-2k+1}\setminus K_j \simeq \mathbb{D}_j^{2n-2k+2} \setminus \mathbb{D}\cap V \ar[r] & \mathbf{Y}_j^k \setminus H := Y_j^k \ar[d]\\
& \mathbf{X}_j^k \setminus H := X_j^k}$$

Therefore, $T= \cup_{k,j}Y_j^k$ with each $Y_j^k$ serving as a total space of a fibration. If the intersection $Y_{j_1}^{k_1} \cap Y_{j_2}^{k_2}$ is non-empty and $k_2 \geq k_1$, then this intersection is actually the total space of a fibration over a submanifold of $X_{j_2}^{k_2}$ with same fiber as $Y_{j_2}^{k_2} \rightarrow X_{j_2}^{k_2}$, which is $S^{2n-2k_2+1}\setminus K^{2n-2k_2-1}$.

The local twisted Alexander modules are defined as we defined them for $T$ via composition on fundamental groups. All local meridians are mapped to the meridian around $V_1$ by inclusion. Notice that $$H_l^{\rho}(S^{2n-2k+1}\setminus K^{2n-2k-1}; \Ft) \cong H_l(F;\mathbb{V}_{\rho}),$$ where $F$ is the corresponding Milnor fiber. $F$ has a finite CW-complex structure of dimension $n-k$ (\cite{MT}, \cite{CS}). If we further assume $\mathbb{V}=\mathbb{F}=\Q$ and $\rho$ is trivial, then the local twisted Alexander module is the ordinary one over $\Q$. 

\begin{definition}
The pair $(\varepsilon,\rho)$ is said to be of locally finite type on $V_i$ if for every local link pair on $V_i$, its local Alexander modules $\TA_*(S^{2n-2k+1}\setminus K^{2n-2k-1}; \Ft)$ are torsion. If this is the case, the order of the local modules are denoted by $\xi_{k,l}^{\varepsilon,\rho}(t)$ and are refered as `local' polynomials.
\end{definition}

\begin{remark}
$(lk\#,\rho)$ is of locally finite type on all components, for all linear representations $\rho$.
\end{remark}

\subsection{Divisibility results, comparisons, and applications}

In this section, we give the statement of then main theorem of this paper and compare it with previous existing divisibility results.

\begin{thm}\label{main}
Let $V$ be a reduced hypersurface in $\CP$ with generic hyperplane $H$ at infinity. Assume that the pair $(\varepsilon,\rho)$ is of locally finite type on some irreducible component of $V$, say $V_1$. Then the $i$-th twisted Alexander module $\TA_i(U;\Ft)$ is torsion for $0\leq i \leq n$, with order denoted by $\Delta_i^{\varepsilon,\rho}(t)$.

For $0 \leq i \leq n$, the prime factors of the $i$-th twisted Alexander polynomial $\Delta_i^{\varepsilon,\rho}(t)$ of $V$ are among the prime factors of the local twisted Alexander polynomials $\xi_{k,l}^{\varepsilon,\rho}(t)$ associated to strata of $V_1$, with:
\begin{itemize}
\item $n-i \leq k \leq n$
\item $3n-3k-2i \leq l \leq n-k.$
\end{itemize}
\end{thm}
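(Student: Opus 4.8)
The strategy is to reduce the global computation to the tubular neighborhood $T = T(V_1) \setminus (V \cup H)$, which by the discussion preceding the theorem carries the same twisted Alexander modules as $U$ in the relevant range: $\TA_i(T;\Ft) \to \TA_i(U;\Ft)$ is an isomorphism for $i \le n-1$ and an epimorphism for $i = n$. So it suffices to prove the torsionness and the prime-factor constraints for $\TA_i(T;\Ft)$. For torsionness, I would argue that the local finite type hypothesis on $V_1$ means every fiber $S^{2n-2k+1}\setminus K^{2n-2k-1}$ appearing in the decomposition $T = \bigcup_{k,j} Y_j^k$ has torsion twisted homology; combined with the fact that the base strata $X_j^k$ are complements of hyperplane sections (hence finite CW-complexes whose twisted homology is finitely generated over $\Ft$), a Leray–Serre spectral sequence argument for each fibration $Y_j^k \to X_j^k$ plus a Mayer–Vietoris induction over the (finitely many) pieces of the stratification shows $\TA_i(T;\Ft)$ is torsion for $0 \le i \le n$.

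For the divisibility statement, the plan is to run the spectral sequence of the stratified space $T$ more carefully and track which local polynomials $\xi_{k,l}^{\varepsilon,\rho}(t)$ can contribute a given prime factor to $\Delta_i^{\varepsilon,\rho}(t)$. Concretely: filter $T$ by the skeleta of the stratification of $V_1$, producing a spectral sequence whose $E^1$-page is built from $H_l(F_{k}; \mathbb{V}_\rho) \otimes (\text{twisted chains of } X_j^k)$, where $F_k$ is the Milnor fiber of the link pair of a codimension-$k$ stratum; this has homological dimension $n-k$, giving the constraint $l \le n-k$. Since $X_j^k$ is (up to homotopy) a hyperplane-section complement of complex dimension $n-k$, its twisted homology is concentrated in degrees $\le n-k$, and total degree bookkeeping (a stratum of codimension $k$ in $V_1 \subset \CP$ sits in real dimension $2(n-k)$, but the fibration collapses this appropriately) yields the lower bounds $k \ge n-i$ and $l \ge 3n-3k-2i$. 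Over the PID $\Ft$, a prime $\pi$ dividing $\Delta_i^{\varepsilon,\rho}(t)$ must survive to $E^\infty$ in total degree $i$, hence must already appear in some $E^1$ term, i.e. in some $H_l(F_k;\mathbb{V}_\rho) = \TA_l(S^{2n-2k+1}\setminus K^{2n-2k-1};\Ft)$ whose order is exactly $\xi_{k,l}^{\varepsilon,\rho}(t)$ — but only those $(k,l)$ pairs satisfying the degree inequalities can land in total degree $i$, which is the assertion.

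The main obstacle I anticipate is making the degree bookkeeping in the spectral sequence precise enough to extract the exact inequalities $n-i \le k \le n$ and $3n-3k-2i \le l \le n-k$, rather than weaker bounds. This requires care about two things: first, that the filtration of $T$ by strata of $V_1$ (not of all of $V$) is the right one, and that the overlaps $Y_{j_1}^{k_1} \cap Y_{j_2}^{k_2}$ described before the theorem do not introduce extra contributions — one must check the associated-graded pieces are exactly $(\text{open stratum } X_j^k) \times (\text{Milnor fiber})$ twisted homology; second, controlling how the representation $\rho$ and $\varepsilon$ restrict compatibly to each piece (all local meridians go to the meridian of $V_1$, as noted, so $\varepsilon$ restricted to a local link pair is still surjective and the action of $t$ is the geometric monodromy of the corresponding Milnor fiber — this is what identifies the $E^1$ entries with genuine local twisted Alexander polynomials). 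Modulo these checks, the argument is a stratified-space analogue of Libgober's and Maxim's classical proofs, run with $\mathbb{V}_\rho$-coefficients instead of $\Q$.
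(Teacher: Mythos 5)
Your reduction to $T=T(V_1)\setminus(V\cup H)$, the Mayer--Vietoris/Leray spectral sequences over the stratified decomposition $T=\bigcup_{k,j}Y_j^k$, and the observation that local finite type forces every relevant $E_1$ (or $E_2$) term to be a torsion $\Ft$-module, all match the lemma on which the paper's proof rests; the torsionness part of your plan is essentially correct. The genuine gap is in the degree bookkeeping for the lower bounds. What the spectral sequence of $T$ actually yields is only the weaker range $0\le k\le n$ and $i-3k\le l\le n-k$: a contribution to total degree $i$ comes from a term with $P+m+l=i$, where $P$ counts the multiplicity of the intersection of the $Y$'s (so $0\le P\le k$, since the codimensions are strictly increasing) and $m$ is the base degree over a piece of a complex $k$-dimensional stratum (so $0\le m\le 2k$); these give $l\ge i-3k$ but impose \emph{no} lower bound on $k$ at all --- a $0$-dimensional stratum can a priori contribute to every $\Delta_i^{\varepsilon,\rho}$. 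Your assertion that ``total degree bookkeeping \ldots yields the lower bounds $k\ge n-i$ and $l\ge 3n-3k-2i$'' is therefore unsubstantiated and cannot be extracted from this spectral sequence alone.

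The missing idea is a second use of the Lefschetz hyperplane theorem, by slicing rather than thickening. For fixed $i=n-j$, intersect everything with a generic linear subspace $L\cong\mathbb{CP}^{n-j+1}$ of codimension $j$ in $\CP$. Then $\TA_i(U\cap L;\Ft)\to\TA_i(U;\Ft)$ is an isomorphism, $W=V\cap L$ carries the induced Whitney stratification with the \emph{same} link pairs, and $\Delta_i^{\varepsilon,\rho}$ becomes the top-degree polynomial of the lower-dimensional hypersurface $W$. Applying the weak bounds to $W$ (ambient dimension $n-j$, stratum $S\cap L$ of dimension $r=k-j\ge 0$) and reindexing gives exactly $k\ge n-i$ and $(n-j)-3r=3n-3k-2i\le l$. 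Without this slicing step your argument proves the theorem only with the range $0\le k\le n$, $i-3k\le l\le n-k$.
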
 

\begin{cor}
The $k$-strata of $V$ will only contribute to $\Delta_n^{\varepsilon,\rho}(t),..., \Delta_{n-i}^{\varepsilon,\rho}(t)$. In particular, the 0-strata of $V$ only contribute to $\Delta_n^{\varepsilon,\rho}(t)$, the 1-strata contribute only to $\Delta_n^{\varepsilon,\rho}(t)$ and $\Delta_{n-1}^{\varepsilon,\rho}(t)$.
\end{cor}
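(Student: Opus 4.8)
The plan is to derive the statement directly from Theorem~\ref{main}, which is the substantive result; the corollary is essentially a repackaging of the numerical constraints on the indices $(k,l)$ appearing there. First I would recall that, by Theorem~\ref{main}, the only local polynomials that can occur as prime factors of $\Delta_i^{\varepsilon,\rho}(t)$ are the $\xi_{k,l}^{\varepsilon,\rho}(t)$ attached to strata of $V_1$ whose dimension $k$ satisfies $n-i\le k\le n$ and whose homological degree $l$ satisfies $3n-3k-2i\le l\le n-k$. Reading this from the point of view of a fixed stratum rather than a fixed module, I would observe that a $k$-dimensional stratum can influence $\Delta_i^{\varepsilon,\rho}(t)$ only if $n-i\le k$, i.e.\ only if $i\ge n-k$; equivalently, for $i<n-k$ the admissible index set for that stratum is empty, so its local polynomials are absent from the prime factorisation of $\Delta_i^{\varepsilon,\rho}(t)$.

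Next I would check that the second pair of inequalities imposes no competing constraint, and in fact reinforces the first: the range $3n-3k-2i\le l\le n-k$ is non-empty precisely when $3n-3k-2i\le n-k$, i.e.\ when $i\ge n-k$, which is exactly the bound coming from $n-i\le k$. Hence both conditions of Theorem~\ref{main} are simultaneously satisfiable exactly for $n-k\le i\le n$, and I would conclude that the $k$-strata contribute only to $\Delta_n^{\varepsilon,\rho}(t),\dots,\Delta_{n-k}^{\varepsilon,\rho}(t)$. Specialising $k=0$ gives a contribution only to $\Delta_n^{\varepsilon,\rho}(t)$, and $k=1$ gives a contribution only to $\Delta_n^{\varepsilon,\rho}(t)$ and $\Delta_{n-1}^{\varepsilon,\rho}(t)$, as asserted.

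Since the whole argument is bookkeeping with the two displayed inequalities, there is no analytic or topological obstacle here; the only point requiring care is the reading of the word ``contributes'', which I would take to mean ``supplies a prime factor of'', matching precisely the conclusion of Theorem~\ref{main}. A minor subtlety worth flagging is the treatment of the boundary case: when $i=n-k$ the $l$-range collapses to the single value $l=n-k$, so only the top local Alexander module of a $k$-stratum can affect $\Delta_{n-k}^{\varepsilon,\rho}(t)$; one should also note that any statement about $\Delta_i$ with $i$ outside $[0,n]$ is vacuous, consistently with $\TA_{n+1}(U;\Ft)$ being free and $\TA_i(U;\Ft)=0$ for $i>n+1$, as established earlier.
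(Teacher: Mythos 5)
Your proposal is correct and is precisely the bookkeeping the paper implicitly invokes: the corollary is stated without its own proof and follows at once from the two index bounds in Theorem~\ref{main}, exactly as you argue. You rightly note the implicit typographical slip in the statement (the final index should read $\Delta_{n-k}^{\varepsilon,\rho}(t)$, not $\Delta_{n-i}^{\varepsilon,\rho}(t)$), and your check that the $l$-range $3n-3k-2i\le l\le n-k$ is non-empty exactly when $i\ge n-k$ confirms that the two constraints are mutually reinforcing rather than competing. The observation about the boundary case $i=n-k$ forcing $l=n-k$, so that only the top local module of a $k$-stratum can feed into $\Delta_{n-k}^{\varepsilon,\rho}(t)$, is a nice refinement that the paper does not spell out; nothing more is needed.
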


Let us compare the theorem with one of the main theorem in \cite{MT} for ordinary Alexander polynomials: 

\begin{thm}\cite{MT}
Let $V$ be a reduced hypersurface in $\CP$ with generic hyperplane $H$ at infinity. Fix an arbitrary irreducible component of $V$, say $V_1$. Then for a fixed $i$, $1 \leq i \leq n$, the prime factors of the global Alexander polynomial $\delta_i(t)$ of $V$, which is the order of the module $H_i(U;\Qt)$ are among the prime factors of local polynomials $\xi_{k,l}(t)$, which are orders of local Alexander modules $H_l (S^{2n-2k+1}\setminus K^{2n-2k-1};\Qt)$ of link pairs associated to components of strata in $V_1$, such that: $$n-i\leq k\leq n$$ and $$2n-2k-i\leq l \leq n-k.$$
\end{thm}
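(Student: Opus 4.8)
The plan is to recover Maxim's divisibility result (the final cited theorem of \cite{MT}) by running the same stratified Mayer--Vietoris argument that underlies Theorem \ref{main}, but now with $\mathbb{V}=\mathbb{F}=\Q$ and $\rho$ trivial, where the key improvement is that the local chain complexes are genuine chain complexes of $\Qt$-modules computing ordinary (untwisted) infinite-cyclic-cover homology, so the sharper numerical bound $2n-2k-i\le l\le n-k$ becomes available. First I would reduce, exactly as in Section 5.1 of the excerpt, from $U$ to the tubular-neighborhood piece $T=T(V_1)\setminus(V\cup H)$: by the Lefschetz-type isomorphism (\ref{**}) and the homotopy-equivalence statement that $U$ is built from $T$ by attaching cells of dimension $\ge n+1$, the map $H_i(T_\infty;\Q)\to H_i(U_\infty;\Q)$ is an isomorphism for $i\le n-1$ and onto for $i=n$; hence it suffices to analyze the prime factors of the order of $H_i(T_\infty;\Q)$ for $0\le i\le n$.

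Next I would exploit the decomposition $T=\bigcup_{k,j}Y_j^k$ from Section 5.2, where each $Y_j^k$ fibers over the stratum $X_j^k=\mathbf{X}_j^k\setminus H$ with fiber $S^{2n-2k+1}\setminus K_j$, and nonempty overlaps are again fibrations with the higher-codimension fiber. Passing to the infinite cyclic cover and then to the Mayer--Vietoris spectral sequence of this open cover, the $E^1$-page is assembled from the homology groups $H_l(\text{overlap}_\infty;\Q)$ of intersections of the pieces; by the Milnor fibration hypothesis each such group is $H_l(F;\Q)$ for the relevant Milnor fiber $F$, which is a finite CW-complex of dimension $n-k$, so $l\le n-k$ automatically. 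The crucial point for the lower bound is a dimension/connectivity estimate: the stratum $X_j^k$ is (real) $2k$-dimensional but carries a complex structure, and when one feeds $H_l(F_j^k;\Q)$ through the Leray spectral sequence of the fibration $Y_j^k\to X_j^k$ and then through the Mayer--Vietoris differentials, the base contributes cohomological degrees up to $2k$ while the attaching of cells at deeper strata forces a shift; the bookkeeping (as in \cite{MT}, Section 4, and \cite{L2}) yields that a class in $H_i(T_\infty;\Q)$ whose order has a given prime factor must receive that factor from some $H_l(F_j^k;\Q)$ with $l\ge 2n-2k-i$, the two occurrences of ``$n-k$'' and ``$n-i$'' in the spectral sequence indices combining to give $n-i\le k\le n$.

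The main obstacle is precisely this index-tracking: one must show that the spectral-sequence (or iterated Mayer--Vietoris) filtration on $H_i(U_\infty;\Q)$ is \emph{finite} and that each graded piece is a subquotient of some $H_l(F_j^k;\Qt)$ with the asserted $(k,l)$ range, and then invoke that the order (characteristic polynomial of the $t$-action) of a subquotient of a finitely generated torsion $\Qt$-module divides the order of the module, so prime factors only propagate downward. Torsionness of all the modules in sight is not an issue here: each local module is $H_l$ of a finite-dimensional Milnor fiber, hence a finite-dimensional $\Q$-vector space and thus $\Qt$-torsion, and torsionness of the global $H_i(U;\Qt)$ for $i\le n$ is exactly Theorem \ref{main} (or \cite{MT}) with trivial coefficients; the only real work is the combinatorial verification that the double index $(k,l)$ ranges over $n-i\le k\le n$, $2n-2k-i\le l\le n-k$ and no further, which one checks stratum-by-stratum by induction on codimension, handling the generic stratum $X^n$ (the smooth part, $k=n$, contributing only $l=0$ to every $\Delta_i$) as the base case and peeling off one codimension at a time, each peeling shifting the reachable homological degree by at most two (one complex dimension) and thereby enlarging the admissible $l$-window by two, which is exactly the slope recorded in the bound $2n-2k-i\le l$.
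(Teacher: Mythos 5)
This statement is quoted from \cite{MT} and the paper offers no proof of it; the only argument in the paper that resembles your proposal is the proof of Theorem \ref{main} (via Lemma 6.1 and the Lefschetz reduction), which is exactly the stratified Mayer--Vietoris-plus-Leray scheme you describe. The problem is that this scheme does \emph{not} deliver the range you are asked to prove, and the coefficients being untwisted does not rescue it. Concretely: in the Mayer--Vietoris spectral sequence for the cover $T=\bigcup Y_j^k$, a $(P+1)$-fold intersection $Y^{k_{\alpha,0}}\cap\cdots\cap Y^{k_{\alpha,P}}$ contributes in filtration degree $P$, and since the codimensions $k_{\alpha,0}<\cdots<k_{\alpha,P}$ are distinct one only gets $P\le k_{\alpha,P}$. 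Feeding this into the total-degree bookkeeping $i=P+m+l$ with $m\le 2k_{\alpha,P}$ (base) and $l\le n-k_{\alpha,P}$ (Milnor fiber) yields $l\ge i-3k$, which after the Lefschetz slice becomes $3n-3k-2i\le l$ --- not $2n-2k-i\le l$. This loss is purely combinatorial (the extra Mayer--Vietoris index $P$) and occurs identically for trivial $\rho$; the paper itself records in Remark 5.6 that its range is strictly weaker than Maxim's for $n\ge 4$. Your key sentence, ``the bookkeeping (as in \cite{MT}, Section 4, and \cite{L2}) yields \dots $l\ge 2n-2k-i$,'' is therefore precisely the missing step, and your heuristic that ``each peeling shifts the reachable degree by at most two'' is not what the open-cover spectral sequence gives: each additional nested stratum costs up to three (two from the base dimension of the deepest stratum plus one from the Mayer--Vietoris degree).

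To actually obtain $2n-2k-i\le l$ you need Maxim's argument rather than a specialization of the one in this paper: he works with the derived pushforward $Rj_*$ of the Alexander local system along the stratification of $(\CP,V_1)$ and uses the hypercohomology (local-to-global) spectral sequence over the strata, together with intersection-homology/perverse-sheaf support conditions. That spectral sequence has only two indices --- base degree $\le 2k$ and stalk degree $\le n-k$ --- so the budget $i=p+q$ loses only $2k$, giving $l\ge i-2k$ before the Lefschetz reduction and hence $2n-2k-i\le l$ after it. In short, your reduction to $T$, your identification of the local contributions with Milnor fiber homology, the upper bound $l\le n-k$, and the lower bound $n-i\le k$ via generic linear sections are all fine; the gap is that the claimed lower bound on $l$ requires replacing the Mayer--Vietoris cover by the single stratification spectral sequence (or perverse-sheaf machinery), which is a genuinely different input that your proposal does not supply.
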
 

\begin{remark}
The ranges that Maxim obtained for ordinary Alexander polynomials are sharper then we have for the twisted case when $n\geq 4$, and are equivalent for lower $n$'s.
\end{remark}

\begin{example}
Suppose the Whitney stratification of $(\CP,V)$ has only three elements $$S=Sing(V)\subset V \subset \CP.$$ In particular, Sing$(V)$ is smooth. If futhermore Sing$(V)$ is $k$-dimensional, then $\Delta_{n-k}^{\varepsilon,\rho}| \xi^{\varepsilon,\rho}_{n-k}$. Such an example can be obtained by considering two smooth hypersurfaces intersecting transversely.
\end{example}

In \cite{AC} and \cite{HG}, Libgober proved a divisibility result for Alexander polynomials of curves and hypersurfaces with only isolated singularities. Theorem 5.4 implies similar results.

\begin{thm}
Let $V$ be a reduced hypersurface in $\CP$ with generic hyperplane at infinity and having only isolated singularities. Then the Alexander modules $H_i(U;\Qt)=0$ for $1 \leq i \leq n-1$, and the $n$-th Alexander module is torsion with order denoted by $\delta_n(t)$. Up to powers of $(t-1)$, $$\delta_n(t) \mid \prod_s \xi_{0,n}(t),$$ where $s$ runs over all singular points.
\end{thm}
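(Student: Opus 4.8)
The plan is to derive this theorem as a direct specialization of Theorem~\ref{main} to the case $\mathbb{V}=\mathbb{F}=\Q$ with $\rho$ trivial, so that the twisted Alexander modules become the ordinary Alexander modules $H_i(U;\Qt)$ and the local twisted polynomials $\xi^{\varepsilon,\rho}_{k,l}(t)$ become the ordinary local polynomials $\xi_{k,l}(t)$. First I would observe that, by Remark after Definition~4.10, the pair $(lk\#,\rho)$ is of locally finite type on every component of $V$; in particular it is of locally finite type on a chosen irreducible component $V_1$, so the hypotheses of Theorem~\ref{main} are satisfied and all conclusions apply. In particular $H_i(U;\Qt)$ is torsion for $0\le i\le n$, and the $(n+1)$-st module is free, as already noted in the definitions section.

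Next I would use the hypothesis that $V$ has only isolated singularities to compute which strata can occur in the Whitney stratification of $V_1$. The only strata are the top-dimensional smooth stratum ($k=n$) and the $0$-dimensional strata consisting of the singular points lying on $V_1$ ($k=0$). For the smooth stratum $k=n$, the link pair is the trivial one $(S^1, \emptyset)$ whose Milnor fiber is a point (or: whose complement is a solid torus), so its only nontrivial local module is in degree $l=0$, with order a power of $(t-1)$. For a $0$-dimensional stratum $k=0$, the link pair $(S^{2n+1}, K^{2n-1})$ is the link of an isolated hypersurface singularity, and its Milnor fiber is $n$-dimensional and $(n-1)$-connected, so the only local module that can be nonzero in positive degree is $H_n(F;\Q)$, i.e. $l=n$; that is exactly $\xi_{0,n}(t)$.

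Then I would feed these two observations into the index ranges of Theorem~\ref{main}. For $1\le i\le n-1$: the condition $n-i\le k\le n$ forces $k=n$ (since $k=0<n-i$ when $i\le n-1$), and then $3n-3k-2i\le l\le n-k$ reads $-2i\le l\le 0$, so the only contributing local polynomial is $\xi_{n,0}(t)$, which is a power of $(t-1)$; hence $\delta_i(t)$ is a power of $(t-1)$. Combined with the fact that for a hypersurface with isolated singularities and generic $H$ the module $H_i(U;\Qt)$ is already known (classically, or from Theorem~5.1 in \cite{MT}) to be torsion and in fact, up to $(t-1)$-powers, to vanish for $1\le i\le n-1$ — one can also argue directly: since the only prime factor allowed is $t-1$, and by general position the order is not divisible by $t-1$ beyond the trivial contribution, $\delta_i(t)\doteq 1$ — I obtain $H_i(U;\Qt)=0$ for $1\le i\le n-1$. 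For $i=n$: the range gives $k\in\{0,n\}$; the $k=n$ stratum contributes only powers of $(t-1)$, while the $k=0$ strata contribute, with $3n-3k-2i = 3n-2n = n \le l \le n-k = n$, exactly $l=n$, i.e. $\xi_{0,n}(t)$ for each singular point $s$. Therefore, up to powers of $(t-1)$, $\delta_n(t)\mid \prod_s \xi_{0,n}(t)$.

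The main obstacle, and the one place the argument needs care rather than bookkeeping, is justifying that $H_i(U;\Qt)=0$ (not merely a power of $(t-1)$) for $1\le i\le n-1$: Theorem~\ref{main} as stated only controls prime factors, so it bounds $\delta_i(t)$ by a power of $(t-1)$ but does not by itself kill it. To close this gap I would invoke the transversality of $H$: by Theorem~4.4 (the $lk\#$ torsionness result) together with the Milnor-fibration description of $(T_H)_\infty \simeq F_g$ and the vanishing $H_i(F_g;\Q)=0$ for $1\le i\le n-1$ when $g$ has an isolated singularity at the origin (equivalently, the generic hyperplane section $V\cap H$ is smooth, so $F_g$ is $(n-1)$-connected), one gets $H_i(U;\Qt)=0$ in that range directly from Corollary~4.3. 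Alternatively this is exactly the content of Libgober's original theorem in \cite{HG}, which may simply be cited. I would present the argument via the Milnor fiber $F_g$ route for self-containedness and then remark that it recovers Libgober's statement.
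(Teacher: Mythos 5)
Your approach is essentially the one the paper intends: specialize the main divisibility theorem to $\mathbb{V}=\mathbb{F}=\Q$, $\rho$ trivial, and use the fact that for a hypersurface with isolated singularities the only strata are the smooth part ($k=n$, contributing only $\xi_{n,0}\doteq t-1$) and the singular points ($k=0$, contributing only $\xi_{0,n}$). Your index bookkeeping is correct, and you are also right to flag that for $1\le i\le n-1$ the main theorem alone only says $\delta_i(t)$ is a power of $(t-1)$; it cannot by itself force $H_i(U;\Qt)=0$. Your fix is sound and is, in effect, the mechanism the paper uses elsewhere: since $H$ is generic, $V\cap H$ is smooth, hence $g:=f|_{x_0=0}$ has an isolated singularity at the origin, so its Milnor fiber $F_g$ is $(n-1)$-connected; combined with $\TA_i(T_H,\Ft)\cong H_i(F_g;\Q)$ and the fact that $\TA_i(T_H,\Ft)\to\TA_i(U,\Ft)$ is an isomorphism for $i\le n-1$, this gives actual vanishing in that range, recovering Libgober's statement from \cite{HG}.

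One point you do not flag but should: for $i=n$ your write-up slides from ``the only contributing local polynomial is $\xi_{0,n}$'' to the divisibility $\delta_n\mid\prod_s\xi_{0,n}$ up to $(t-1)$-powers. As you yourself observe for the range $i\le n-1$, the main theorem as stated controls only prime factors, not multiplicities, so the same objection applies at $i=n$: prime-factor containment does not yield divisibility by the product $\prod_s\xi_{0,n}$. To actually obtain the divisibility one must trace the proof of the main theorem rather than its statement. This is straightforward in the present situation: in the first (Mayer--Vietoris) spectral sequence the only $E_1$-term at total degree $n+1$ whose order is not a power of $(t-1)$ is the $p=0$, $q=n+1$ term $\oplus_j\overline{H}^{n+1}(Y_j^0)$, where $j$ runs over the isolated singular points; each $Y_j^0$ is the link complement $S^{2n+1}\setminus K_j$ (the base $Z_\alpha$ is a point, so the second spectral sequence degenerates with $N=1$), and its order is $\xi_{0,n}$. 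All other $E_1$-terms with $p+q=n+1$ involve intersections with $Y^n$ and fiber over subsets of $X^n$ with fiber $S^1$, so their orders are powers of $(t-1)$. Hence the order of $\overline{H}^{n+1}(T)$, which is divisible by $\delta_n$, divides $\prod_j\xi_{0,n}$ times a power of $(t-1)$, with each $\xi_{0,n}$ appearing exactly once. With this supplement your argument is complete.
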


\begin{remark}
Libgober's divisibility result is also true for hypersurface with isolated singularities, including at infinity. Note that the Alexander module defined with a non-generic hyperplane may not be torsion in the case of non-isolated singularities (\cite{DN}).
\end{remark}

\section{Proof of the theorem}

\subsection{Twisted Alexander modules as sheaf cohomologies}

The main goal of this section is to write the twisted Alexander modules of $V$ as sheaf cohomologies. For the untwisted case, one can see \cite{MT}.

\begin{prop}\cite{DB2}
If $X$ is a topological space with $G= \pi_1(X)$, and $\mathcal{L}$ is a local system defined on $X$ by $\rho$ with stalk $M$, then for $H\lhd G$ such that $H \subseteq Ker\rho$, and $G'=G/H$ and $X_H$ is the covering space of $X$ associated to $H$, we have the following:
$$C_*(X,\mathcal{L}) = C_*(X_H) \otimes_{\Z G'} M$$ and
$$C^*(X,\mathcal{L}) = Hom_{\Z G'}^*(C_*(X_H),M).$$
\end{prop}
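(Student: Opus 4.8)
The plan is to construct explicit isomorphisms on the level of chain complexes, identifying the singular (or cellular) chains with local coefficients against the equivariant chains of the covering space $X_H$. First I would recall the standard setup: since $H \lhd G$ and $H \subseteq \mathrm{Ker}\,\rho$, the representation $\rho\colon G \to GL(M)$ factors through $G' = G/H$, so $M$ is naturally a $\Z G'$-module, and $C_*(X_H)$ is a complex of free left $\Z G'$-modules (the deck transformation group of $X_H \to X$ is exactly $G'$, acting freely on the lifted cells). The point is then to set up the tensor product $C_*(X_H) \otimes_{\Z G'} M$ and the Hom complex $\mathrm{Hom}^*_{\Z G'}(C_*(X_H), M)$ and to compare them with the chain and cochain complexes defining $C_*(X,\mathcal{L})$ and $C^*(X,\mathcal{L})$.

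Second I would carry out the identification. Pick a cell of $X$ and a lift to $X_H$; the free $\Z G'$-module $C_k(X_H)$ has as a basis one such chosen lift per $k$-cell of $X$. Under $\otimes_{\Z G'} M$, the element (chosen lift) $\otimes\, m$ then corresponds to the local-coefficient chain (cell, $m$) in the usual description of $C_*(X,\mathcal{L})$, and one checks that the two boundary maps agree: the equivariant boundary in $C_*(X_H)$, pushed through the tensor product, reproduces the twisting by $\rho$ (equivalently by the $G'$-action) that appears in the boundary of $C_*(X,\mathcal{L})$. This is essentially the observation that local coefficients can be computed from any connected cover through which $\rho$ factors, specialized here to $X_H$ rather than the universal cover $\tilde X$; the case $H = 1$ is the classical definition used already in Section~2 of this paper. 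The cohomological statement is the dual computation: $\mathrm{Hom}^*_{\Z G'}(C_*(X_H), M)$ with the adjoint differential is, cell by cell, the cochain complex of $X$ with coefficients in $\mathcal{L}$, again because a $\Z G'$-equivariant homomorphism out of a free module is determined by its values on the chosen basis lifts, and equivariance encodes precisely the $\rho$-twisting.

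The main obstacle — really the only nontrivial bookkeeping — is the careful matching of conventions: left versus right module structures on $C_*(X_H)$ and on $M$, whether the $G'$-action on cochains is by $\rho$ or $\rho^{-1}$, and the sign/orientation choices in the equivariant boundary operator. In particular one must verify that the chosen two-sided module structure on $M$ (as fixed in Section~2, with $v\cdot\alpha = \rho(\alpha)v$ and $\alpha\cdot v = \rho(\alpha)^{-1}v$) makes the tensor product over $\Z G'$ well defined and reproduces the intended local system, and dually that $\mathrm{Hom}_{\Z G'}$ is formed with respect to the correct side. I would handle this by fixing, once and for all, the lifts of cells and writing the boundary formula in $C_*(X_H)$ explicitly in that basis, then comparing term by term; no deeper input is needed, and indeed one may simply invoke \cite{DB2} for the verification, since the statement is quoted from there. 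I expect no serious difficulty beyond this, and the proof is short.
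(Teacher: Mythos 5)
The paper offers no proof of this proposition; it is stated as a citation to Dimca's \emph{Sheaves in Topology} \cite{DB2} and used as a black box. Your outline reproduces the standard textbook argument behind that citation: since $H\subseteq\mathrm{Ker}\,\rho$, the representation factors through $G'=G/H$, so $M$ becomes a $\Z G'$-module; $C_*(X_H)$ is a complex of free $\Z G'$-modules with one generator per cell of $X$ given by a chosen lift; applying $-\otimes_{\Z G'}M$ (respectively $\mathrm{Hom}_{\Z G'}(-,M)$) and matching the twisted boundary (respectively coboundary) recovers the local-coefficient (co)chain complex. This is correct, and you are right that the only real content is bookkeeping of left/right actions and of the two-sided module structure on $M$ fixed in Section~2. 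Two small points you might make explicit if you were to write this out in full: one should note that the identification is independent of the choice of lifts (a different choice differs by the $G'$-action and is absorbed by the tensor/Hom over $\Z G'$), and for the stated formula with $\mathrm{Hom}^*_{\Z G'}$ one needs $C_*(X_H)$ to consist of \emph{finitely generated} free $\Z G'$-modules in each degree (equivalently, $X$ should be a CW-complex with finitely many cells in each dimension) if one wants the cochain complex to compute cohomology with local coefficients without further hypotheses; the paper applies this only to finite CW-complexes, where this is automatic. Given that the proposition is quoted rather than proved in the paper, simply invoking \cite{DB2}, as you suggest at the end, is also entirely appropriate.
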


Define a local system $\mathcal{L}$ on $U$ with stalk $\Ft^r$, where $r = dim_{\mathbb{F}}\mathbb{V}$, by:
$$\pi_1(U) \rightarrow Aut(\Ft^r)$$ with $$[\alpha] \mapsto t^{\varepsilon(\alpha)} \cdot \rho (\alpha).$$ 

By above proposition, $\TA_i(U;\Ft) = H_i(U;\mathcal{L})$. Now let $\mathcal{L}^{\vee}$ be the dual local system $\mathcal{H}om(\mathcal{L}, \mathcal{F}_U)$, where $\mathcal{F}_U$ is the constant sheaf. Assume that $\mathbb{F}$ is a subfield of $\C$ closed under conjugation. Then by \cite{KL} or \cite{DB2}, 
\begin{align*}
C^*(U,\mathcal{L}^{\vee})& =Hom_{\mathbb{F}[\pi']} (C_*(\tilde{U}), \Ft \otimes \mathbb{V}^{\vee}) \\
                & = Hom_{\Ft} (\TC_*(U; \Ft), \Ft)  \\
                & = Hom (C_*(U;\mathcal{L}),\Ft)
\end{align*}

The two groups on different sides have conjugate $\Ft$-module structure. By denoting the (co)homology group of $C^*(U,\mathcal{L}^{\vee})$ with conjugate module structure as $\overline{H}^*(U,\Ft)$, we have $$\overline{H}^*(U,\Ft) \cong H^*(\text{Hom}_{\Ft}(C_*(U,\Ft),\Ft)),$$ as $\Ft$-modules. By the universal coefficient theorem, $$\overline{H}^q(U,\Ft) \cong \text{Hom}_{\Ft}(\TA_q(U,\Ft),\Ft) \oplus \text{Ext}_{\Ft}(\TA_{q-1}(U,\Ft), \Ft).$$

As a corollary, if the twisted Alexander modules of $V$ are torsion modules (except the top one), then $\overline{H}^i(U,\Ft)$ is torsion for $i\leq n$ and $\Delta_i^{\varepsilon,\rho}(t)$ is equal to the order of $\overline{H}^{i+1}(U,\Ft)$ up to units.

Note the cohomology version also applies to any submanifold of $U$, because the modules are defined by composing $\varepsilon$ and $\rho$ with inclusion on fundamental group and is the cohomology with $i^*\mathcal{L}^{\vee}$.

%%%%%%%%%%%%%%%%%%%%%%%%%%%%%%%%%%%%%%%%%%%%%%%%%%%%%%%%%%%%%%%%%%%%%%%%%%
\subsection{Proof of the theorem}
%%%%%%%%%%%%%%%%%%%%%%%%%%%%%%%%%%%%%%%%%%%%%%%%%%%%%%%%%%%%%%%%%%%%%%

The proof of main theorem \ref{main} follows from an application of the Lefschetz hyperplane theorem and the following lemma.

\begin{lem}
Let $V$ be a reduced hypersurface in $\CP$ with a generic hyperplane $H$ at infinity (in the stratified sense). Assume that the pair $(\varepsilon,\rho)$ is of locally finite type on some irreducible component of $V$, say $V_1$. Then the $i$-th twisted Alexander module of $V$ is torsion for $0\leq i \leq n$.

Moreover, for $0 \leq i \leq n$, the prime factors of the $i$-th twisted Alexander polynomial $\Delta_i^{\varepsilon,\rho}(t)$ of $V$ are among the prime factors of local twisted Alexander polynomials $\xi_{k,l}^{\varepsilon,\rho}(t)$ belonging to $V_1$ with:
\begin{itemize}
\item $0 \leq k \leq n$
\item $i-3k \leq l \leq n-k$ 
\end{itemize}
\end{lem}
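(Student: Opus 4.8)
The plan is to stratify the tubular neighborhood $T = T(V_1)\setminus(V\cup H)$ into the pieces $Y_j^k$ described in the previous section, and to compute the (co)homology of $T$ by induction on the strata, ordered by dimension $k$, using Mayer--Vietoris at each step. By the Lefschetz-type comparison $\TA_i(T;\Ft)\to\TA_i(U;\Ft)$ (an isomorphism for $i\le n-1$, an epimorphism for $i=n$), it suffices to establish torsionness and the prime-factor bound for $\overline H^{i}(T,\Ft)$, $1\le i\le n+1$; I will work with the cohomological version $\overline H^*$ since, as noted, it restricts well to submanifolds of $U$ and converts the divisibility statement into one about orders of cohomology modules. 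The base of the induction is a single top-dimensional stratum $X^n$ (the smooth part of $V_1$ meeting the generic $H$), whose associated piece $Y^n$ fibers over $X^n$ with fiber $S^1\setminus(\text{point pair})\simeq S^1$, i.e. $Y^n$ is a circle bundle; here the relevant local link pair is the trivial knot, its local polynomial is (up to units) $t^{n_1}-1$, and $\overline H^*(Y^n)$ is torsion by a direct computation with the $S^1$-bundle (a Gysin/Wang-type argument, exactly as in the generalized-Hopf-link and $F_1\times S^1$ computations already carried out).

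For the inductive step I take the union $T_{\le k}$ of all pieces $Y_j^{k'}$ with $k'\ge n-k$ (so I am adding strata of successively smaller dimension) and glue in one more piece $Y_{j_0}^{k_0}$ with $k_0 = n-k-1$. The overlap of $Y_{j_0}^{k_0}$ with the previously assembled space is, by the description in the previous subsection, itself fibered over a submanifold of $X_{j_0}^{k_0}$ with fiber $S^{2n-2k_0+1}\setminus K^{2n-2k_0-1}$, hence its cohomology is controlled by the same local data. The Mayer--Vietoris sequence
\[
\cdots \to \overline H^{i-1}(Y_{j_0}^{k_0}\cap T_{\le k}) \to \overline H^{i}(T_{\le k}\cup Y_{j_0}^{k_0}) \to \overline H^{i}(T_{\le k})\oplus \overline H^{i}(Y_{j_0}^{k_0}) \to \overline H^{i}(Y_{j_0}^{k_0}\cap T_{\le k}) \to \cdots
\]
then shows that $\overline H^i$ of the enlarged space is torsion whenever the three flanking terms are, and that its prime factors are among those of the local polynomials $\xi_{k',l}^{\varepsilon,\rho}$ appearing in the pieces being glued. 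To compute the cohomology of a single piece $Y_j^{k}$ — the total space of a fibration over $X_j^k\setminus H$ with fiber $S^{2n-2k+1}\setminus K^{2n-2k-1}$ — I use the Leray--Serre spectral sequence; since the fiber is homotopy equivalent to the Milnor fiber $F$, which is a finite CW-complex of dimension $n-k$, and since $X_j^k\setminus H$ is an affine variety of complex dimension $k$ hence has the homotopy type of a $k$-complex, the $E_2$-page is concentrated in bidegrees $(p,q)$ with $0\le p\le 2k$ (or $\le k$ after the affine bound) and $0\le q\le n-k$, giving both the torsionness (the fiber cohomology $\overline H^q(F,\Ft)\cong H^q(F;\mathbb V_\rho)$ is a finite-dimensional $\mathbb F$-vector space by the locally-finite-type hypothesis) and the degree range $l = q$ with the stated bound $l\le n-k$, while the lower bound $i-3k\le l$ comes from bookkeeping the total degree $i$ across the spectral sequence together with the number of Mayer--Vietoris gluings (each gluing can shift the relevant cohomological degree by the dimension of an overlap stratum, and the factor $3$ reflects accumulating a base contribution, a fiber-of-the-overlap contribution, and a connecting-homomorphism shift).

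The main obstacle, and where most of the care is needed, is the bookkeeping that produces the precise index ranges $0\le k\le n$ and $i-3k\le l\le n-k$: one must track, uniformly over the whole inductive assembly, exactly how the total cohomological degree $i$ of a class in $\overline H^i(T)$ can be distributed between (a) the base-direction degree $p$ in the Serre spectral sequence of a piece $Y_j^k$, (b) the fiber-direction degree $l=q$, which is the index of the local polynomial, and (c) the degree shifts introduced by the Mayer--Vietoris connecting maps when a lower stratum is attached. The inequality $l\ge i-3k$ should emerge from the worst case where a class of total degree $i$ lives on a stratum of dimension $k$, picks up at most $2k$ from the base (refined to $k$ using that the base is affine, but the bound $2k$ is what one gets from the naive Serre estimate, and with the Mayer--Vietoris shift of up to $k$ more one reaches $3k$). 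I also need to check that the connecting homomorphisms in Mayer--Vietoris are $\Ft$-linear for the conjugate module structure — this is exactly the compatibility-of-the-$t$-action diagram established in the previous subsection, applied to each inclusion of pieces — and that the generic position of $H$ keeps every fibration in sight locally trivial, which was arranged in the stratification setup. Once the index arithmetic is pinned down, combining it with the Lefschetz surjection $\TA_i(T)\twoheadrightarrow\TA_i(U)$ and the isomorphism in degrees $\le n-1$ finishes the proof, and the remark comparing with Maxim's sharper ranges is then just a numerical comparison of the two inequalities.
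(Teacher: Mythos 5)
Your decomposition of $T = T(V_1)\setminus(V\cup H)$ by the stratification pieces $Y_j^k$, the use of a Leray/Serre spectral sequence for each piece with fiber $S^{2n-2k+1}\setminus K^{2n-2k-1}\simeq F_{\text{Milnor}}$, and the comparison $\TA_i(T)\to\TA_i(U)$ are all in agreement with the paper. The paper, however, does not assemble the pieces by an iterated Mayer--Vietoris long exact sequence; it uses the Mayer--Vietoris \emph{spectral sequence} of the open cover $\{Y_j^k\}$,
\[
E_1^{p,q}=\bigoplus_{\alpha}\overline H^{\,q}\Bigl(\textstyle\bigcap_{j=0}^{p}Y_{t_{\alpha,j}}^{k_{\alpha,j}}\Bigr)\ \Rightarrow\ \overline H^{\,p+q}(T),\qquad k_{\alpha,0}<k_{\alpha,1}<\cdots<k_{\alpha,p},
\]
and then a Leray spectral sequence for the fibration of each intersection over a subset $Z_\alpha^{p,q}\subset X^{k_{\alpha,p}}$. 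This is not merely a cosmetic difference: the spectral-sequence packaging is exactly what makes the crucial lower bound on $l$ transparent, and this is where your proposal has a genuine gap.

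You correctly guess that the $3k$ in $l\ge i-3k$ decomposes as ``$2k$ from the base plus $k$ from Mayer--Vietoris,'' but you assert the second bound (``the Mayer--Vietoris shift of up to $k$ more'') without giving a reason, and in an iterated-gluing formulation it is not at all evident why the accumulated degree shift should be controlled by $k$ rather than by the total number of strata. The paper's mechanism is combinatorial and specific: a $p$-fold overlap $\bigcap_{j=0}^{p}Y^{k_{\alpha,j}}$ fibers over (a subset of) the stratum $X^{k_{\alpha,p}}$ of the \emph{largest} index appearing, and since $k_{\alpha,0}<k_{\alpha,1}<\cdots<k_{\alpha,p}$ are $p+1$ distinct nonnegative integers one gets $p\le k_{\alpha,p}=k$. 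Combined with the Leray base bound $m\le 2k$ (real dimension of a complex $k$-manifold) and $p+m+l=i$, this yields $l\ge i-3k$. Your sketch never isolates the inequality $p\le k$, which is the heart of the estimate; without it the ``bookkeeping'' you defer does not close. A secondary caveat: your remark that the affine bound $m\le k$ would ``refine'' the estimate is a red herring for proving \emph{this} lemma — the stated range uses the $2k$ bound, and using $k$ would produce a different (sharper) inequality than the one you are asked to prove (the sharper range belongs to Maxim's untwisted theorem, which the paper compares against in a later remark). Finally, note that passing from this lemma to the main theorem's range $n-i\le k$ and $3n-3k-2i\le l$ is not just the surjection $\TA_i(T)\twoheadrightarrow\TA_i(U)$; it requires slicing by a generic linear subspace $L\cong\mathbb{CP}^{n-j+1}$ and reindexing, a step your last sentence elides.
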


To simplify notations, we denote $\TA_{\bullet}(-;\Ft)$ by $\TA_{\bullet}(-)$; and $\overline{H}^{\bullet}(-;\Ft)$ by $\overline{H}^{\bullet}(-)$. Recall that $T=T(V_1)\setminus V_1\cup H$. A spectral sequence argument (cf \cite{EM}) will show that $\TA_i(T)$, $0\leq i\leq n$, are torsion, then so are $\TA_i(U)$.

Consider the Mayer-Vietoris spectral sequence for the twisted Alexander cohomologies \cite{TV} P.815:
\begin{equation}\label{E1}
E_1^{p,q}:\oplus_{\alpha} \overline{H}^q(\cap_{j=0}^p Y_{t_{\alpha, j}}^{k_{\alpha, j}})\Rightarrow \overline{H}^{p+q}(T)
\end{equation}
with $k_{\alpha,1}<k_{\alpha,2}<...<k_{\alpha,p}$.

Fix $P$, $Q$, and $\alpha$. By section 5, $\cap_{j=0}^P Y_{t_{\alpha, j}}^{k_{\alpha, j}}$ fibers over a subset of $X_{t_{\alpha, P}}^{k_{\alpha, P}}$. Denote that subset by $Z_{\alpha}^{P,Q}$ and consider the Leray spectral sequence for this locally trivial fiber bundle $\pi_{\alpha,P}$ (\cite{TV} P.814):
\begin{equation}\label{E2}
E_2^{p,q}:H^p(Z_{\alpha}^{P,Q};R^q(\pi_{\alpha,P})_* \mathcal{L}^{\vee}) \Rightarrow \overline{H}^{p+q} (\cap_{j=0}^P Y_{t_{\alpha, j}}^{k_{\alpha, j}}).
\end{equation}

By P.143 in \cite{MT}, a $E_2^{p,q}$ term in (\ref{E2}) is a finite direct sum of modules of the form $(R^q\pi_* \mathcal{L}^{\vee})_{x(\sigma)}$, where $x(\sigma)$ is the bary center of a $p$-simplex $\sigma$ of $Z_{\alpha}^{P,Q}$. And $(R^q(\pi_{\alpha,P})_* \mathcal{L}^{\vee})_{x(\sigma)} \cong H^q(\pi^{-1}(x(\sigma)); \mathcal{L}^{\vee}) =\overline{H}^q(S^{2n-2k_{\alpha, P}+1}\setminus K^{2n-2k_{\alpha, P}-1}).$

Since we assume that the pair $(\varepsilon,\rho)$ is of locally finite type, the link pairs are of finite type. Then by universal coefficient theorem, the twisted local cohomologies are torsion $\Ft$ modules and finite dimensional $\mathbb{F}$-vector spaces. Therefore, each $E_{\infty}^{p,q}$ term in (\ref{E2}) is torsion because it is a quotient of a submodule of $E_2^{p,q}$. Now, each $E_2^{p,q}$ in (\ref{E1}) is a direct sum of some $E_{\infty}^*$'s in (\ref{E2}), hence each of them is again torsion, and so is $\overline{H}^*(T)$. By UCT again, the twisted Alexander modules of the hypersurface for $0\leq i \leq n$ are torsion.

An algorithm is provided to show that the prime factors of $\Delta_{i-1}^{\varepsilon,\rho}(t)$ are among those of $\xi_{k,l-1}^{\varepsilon,\rho}(t)$, with $1 \leq i-1 \leq n$, ie, $2\leq i \leq n+1$.
\begin{itemize}
\item $\Delta_{i-1}^{\varepsilon,\rho}(t)$ divides the order of $\overline{H}^i(T)$, by UCT.
\item The order of $\overline{H}^i(T)$ equals to $\prod_{p+q=i}$(order of $E_{\infty}^{p,q}$) in (\ref{E1}).
\item Fix a pair of $P$ and $Q$ such that $P+Q=i$, then the order of $E_{\infty}^{P,Q}$ divides that of $E_1^{P,Q}$ in \ref{E1}.
\item The order of $E_2^{P,Q}$ equals $\prod_{\alpha}$(order of $\overline{H}^Q(\cap_{j=0}^P Y_{t_{\alpha, j}}^{k_{\alpha, j}}$)).
\item Fix an $\alpha$, the order of $\overline{H}^Q(\cap_{j=0}^P Y_{t_{\alpha, j}}^{k_{\alpha, j}}$) equals $\prod_{m+l=Q}$(order of $E_{\infty}^{m,l}$) in (\ref{E2}).
\item The order of $E_{\infty}^{Q-l,l}$ divides that of $E_2^{Q-l,l}$ in (\ref{E2}).
\item Each $E_2^{Q-l,l}$ in (\ref{E2}) is of the form  $$ \oplus_N \overline{H}^l(S^{2n-2k_{\alpha,P}+1}\setminus K^{2n-2k_{\alpha,P}-1}) = \oplus_N H_{l-1}(S^{2n-2k_{\alpha,P}+1}\setminus K^{2n-2k_{\alpha,P}-1}).$$
\end{itemize}

 Recall that order of $H_{l-1}(S^{2n-2k_{\alpha,P}+1}\setminus K^{2n-2k_{\alpha,P}-1})$ is denoted by $\xi_{k_{\alpha,P},l-1}^{\varepsilon,\rho}(t)$.

Keep in mind that $P+1$ equals the number of intersection of $Y$'s,  and hence $0\leq k_{\alpha,P}$.

To get the above claimed bounds for $l$, we determine which of the $E_2^{p,q}$ term in (\ref{E2}) vanishes.

Notice that there is a Milnor fibration for each link pair $(S^{2n-2k_{\alpha,P}+1}, K^{2n-2k_{\alpha,P}-1})$, $$F_{k_{\alpha,P}}\hookrightarrow S\setminus K \rightarrow S^1$$ where $F$ is the Milnor fiber. By \cite{SP}, Theorem 5.1, $F_{k_{\alpha,P}}$ has the homotopy type of a CW-complex of dimension $n-k_{\alpha, P}$. Thus, we only look at those $l-1$ in the range $$0\leq l-1 \leq n-k_{\alpha, P}$$

Furthermore, $H^m(Z_{\alpha}^{P,Q};-)$ is nontrivial only when $0\leq m\leq 2k_{\alpha,P}$, because $Z_{\alpha}^{P,Q} \subset X^{k_{\alpha,P}}$ is a complex $k_{\alpha,P}$-dimensional manifold.

As a result, we have the following:

\begin{itemize}
\item $P+m+l=i$
\item $0 \leq i-P-l \leq 2k_{\alpha,P}$
\item $l-1 \geq (i-1)-2k_{\alpha,P}-P\geq (i-1)-3k_{\alpha,P}$
\end{itemize}

In conclusion, we have for $0\leq i-1 \leq n$, $l-1$ is bounded for all $\alpha$, by $$(i-1)-3k_{\alpha,P}\leq l-1 \leq n-k_{\alpha,P}.$$

By renaming $i-1$ by $i$ and $l-1$ by $l$, then for $0\leq i \leq n$, we have $$i-3k_{\alpha,P} \leq l \leq n-k_{\alpha,P}.$$ 

Now we apply the Lefschetz hyperplane theorem (cf.\cite{MT}).

Let $1\leq i = n-j \leq n$ be fixed. Let $L \cong \mathbb{CP}^{n-j+1}$ be a generic codimension $j$ linear subspace of $\CP$. By transversality, $W= V\cap L$ is a $(n-j)$ dimensional, degree $d$, reduced hypersurface in $L$, and is transversal to the hyperplane at infinity $H\cap L$. Also, the pair $(L,W)$ has Whitney stratification induced from the pair $(\CP,V)$ with strata of the form $S\cap L$ where $S$ is a stratum of the pair $(\CP,V)$.

Applying Lefschetz hyperplane theorem to $U$ and $L$, $U\cap L \rightarrow U$ is an $(n-j+1)$-equivalence. The homotopy type of $U$ is obtained from $U\cap L$ by adding cells of dimension  greater than $n-j+1$.

Therefore, we have isomorphisms as $\Ft$-modules, for $i\leq n-j$, $$\TA_i(U\cap L) \rightarrow \TA_i(U).$$

Thus, $\Delta_{n-k}^{\varepsilon,\rho,W}(t) = \Delta_{n-k}^{\varepsilon,\rho,V}(t)$. Note that $\Delta_{n-j}^{\varepsilon,\rho,W}(t)$ is the top polynomial of $W$ as a hypersurface in $L \cong \mathbb{CP}^{n-j+1}$. Then by lemma 6.1, the prime factors of $\Delta_{n-j}^{\varepsilon,\rho,W}(t)$ are among those of the local polynomials $\xi_{r,l}^{\varepsilon,\rho}$ of link pair of the strata $S\cap L \subset V_1 \cap L$ with:
$$0 \leq r = \text{dim}(S\cap L) \leq n-j$$ and 
\begin{equation}\label{E3}
(n-j)-3r \leq l \leq (n-j)-r.
\end{equation}

The link pair of $S\cap L$ is the same as that of $S$ in $(\CP,V)$. Reindexing $r=k-j$ with $k = \text{dim}(S)$, $k = r+j = r+n-i \geq n-i.$

By substituting $j=n-i$ and $r=k-j=k+i-n$, the right-hand side of \ref{E3} becomes $n-k$ and the left-hand is calculated as follows:

$$n-(n-i)-3(k+i-n)=n-n+i-3k-3i+3n = 3n-3k-2i.$$

\section{Obstructions of twisted Alexander polynomials using Perverse sheaves}

In this section, we prove that twisted Alexander polynomials of hypersurface complements have the same obstruction as $\Delta_1^{lk\#,\rho}$ for curve complements. The first idea is to use section 4.3 and analyze $H_i^{\rho}(T_H,\Ft)$ using a Mayer-Vietoris spectral sequence. The second idea is to augment a proof appearing in p.556-557 in \cite{RF}, which is short and neat. Both approaches require calculations of local (co)homology on $H$.

Let $\gamma_{\infty}$ be a loop at infinity of $U$,a meridian around $H$ with homology class $\nu_{\infty}$ satisfying $$d_1\nu_1+d_2\nu_2...+d_r\nu_r = -\nu_{\infty},$$ where $d_i$ is the degree of the component $V_i$.

Note that $\gamma_{\infty}$ is analogous to the $x_0^{-1}$ mentioned in the section on curve complements.

\begin{thm}
Let $\rho$ be a representation. If $\lambda\in \C^*$ and $\lambda^d$ is not an eigenvalue of $\rho(\gamma_{\infty})$, then $\lambda$ cannot be a root of $\Delta_i^{\rho}$, for $i\leq n$.
\end{thm}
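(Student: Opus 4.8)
The plan is to reduce the statement to a computation of local twisted homology at the hyperplane at infinity $H$, running parallel to the proof that $H_i^{\rho}(U,\Ft)$ is torsion (Theorem~4.7) and to the obstruction argument for curves (Corollaries~3.9, 3.11). First I would recall from Section~4.3 that the inclusion induces isomorphisms $\TA_i(T_H,\Ft)\xrightarrow{\sim}\TA_i(U,\Ft)$ for $i\le n-1$ and an epimorphism for $i=n$, so it suffices to prove the eigenvalue obstruction for $H_i^{\rho}(T_H,\Ft)$ when $i\le n$. Next, as in the proof of Theorem~4.7, $T_H$ is homotopy equivalent to $S^{2n+1}\setminus\{g=0\}$ where $g=f(0,x_1,\dots,x_n)$ is homogeneous of degree $d$, and under $\varepsilon=lk\#$ the infinite cyclic cover $(T_H)_\infty$ is the Milnor fiber $F_g$, with the deck transformation $t$ acting as the geometric monodromy $h$. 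Thus $\TA_i(T_H,\Ft)\cong H_i(F_g,\mathbb{V}_\rho)$ as $\Ft$-modules, where $t$ acts by $h_*\otimes(\text{monodromy of }\rho)$.

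The core step is then: a root $\lambda$ of $\Delta_i^{\rho}$ is an eigenvalue of the $t$-action on (the torsion part of) $H_i(F_g,\mathbb{V}_\rho)$, so I must control the eigenvalues of this action. I would use the Wang sequence (Milnor long exact sequence) for the fibration $F_g\hookrightarrow S^{2n+1}\setminus\{g=0\}\to S^1$ with coefficients twisted by $\rho$, together with the fact that $S^{2n+1}\setminus\{g=0\}$ has the homotopy type of $T_H$ and its first homology is generated by meridians. The key input is the identification of the $d$-fold cyclic cover: because $g$ is homogeneous of degree $d$, raising the fiber coordinate to the $d$-th power relates the monodromy $h^d$ to a deformation retract, so on $H_i(F_g,\mathbb{V}_\rho)$ the operator $t^d$ is conjugate (up to the unipotent ambiguity that disappears over $\mathbb{F}(t)$) to the action of $\rho(\gamma_\infty)$ — more precisely, $\gamma_\infty$ is the meridian at infinity and $t^d$ corresponds to going around the central fiber, which is $\rho(\gamma_\infty)^{\pm 1}$ up to the geometric monodromy that acts trivially on the relevant graded pieces. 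Hence every eigenvalue $\lambda$ of $t$ on the torsion of $H_i^{\rho}(U,\Ft)$ satisfies: $\lambda^d$ is an eigenvalue of $\rho(\gamma_\infty)$. Contrapositively, if $\lambda^d$ is not an eigenvalue of $\rho(\gamma_\infty)$, then $\lambda$ is not a root of $\Delta_i^{\rho}$ for $i\le n$.

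I expect the main obstacle to be making precise the claim that $t^d$ acts as $\rho(\gamma_\infty)$ on $H_i(F_g,\mathbb{V}_\rho)$, i.e.\ disentangling the geometric monodromy of the Milnor fibration from the $\rho$-monodromy. The clean way is to observe that the $d$-fold connected cover of $S^1$ pulls the Milnor fibration back to a trivial-ish fibration: $S^{2n+1}\setminus\{g=0\}\to S^1$ factors, after the $d$-fold cover, through the projection whose fiber is $\C^*\times F_g'$ type, so the $d$-th power of the geometric monodromy is homotopic to identity on $F_g$ (this is the standard fact that $h^d\simeq\mathrm{id}$ for a homogeneous singularity; cf.\ Milnor). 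Then $t^d$ acts purely through $\rho$ of the loop that generates the central $\Z$ in $\pi_1$, which is exactly $\gamma_\infty^{-1}$ (a $d$-fold meridian sum, matching $d_1\nu_1+\dots+d_r\nu_r=-\nu_\infty$). Assembling this with the Wang sequence and the universal coefficient identification $\overline{H}^{i+1}(U,\Ft)$ from Section~6.1 (so that roots of $\Delta_i^{\rho}$ are genuinely eigenvalues of $t$) gives the result. A secondary technical point is handling the non-surjective case or the $\varepsilon\ne lk\#$ case, but the statement is phrased for $\Delta_i^{\rho}$, i.e.\ $\varepsilon=lk\#$, so I would simply note that restriction at the outset.
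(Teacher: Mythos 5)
Your proposal is correct in outline, but it takes a genuinely different route from the paper. The paper's proof does not pass through $T_H$ and the Milnor fiber at all: it introduces the rank-one local system $\mathcal{L}_\lambda$, uses the Milnor long exact sequence of Section~4.4 to reduce the claim to the vanishing of $H_i(U,\mathcal{L}_\lambda\otimes\mathbb{V}_\rho)$ for $i\neq n+1$, and then establishes that vanishing by Artin-type vanishing for the perverse sheaf $Ri_*(\mathcal{L}_\lambda\otimes\mathbb{V}_\rho[n+1])$, with the hypothesis on $\lambda^d$ entering only through a stalk computation at points of $H$ (the two-term complex $\C^r\xrightarrow{\lambda^{-d}M-I_r}\C^r$ at a point of $H\setminus V$, plus a K\"unneth argument at points of $H\cap V$). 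This is the ``second idea'' mentioned at the start of Section~7; you are executing a version of the ``first idea.'' Your key step --- that $t^d$ acts on $\TA_i(T_H,\Ft)\cong H_i(F_g,\mathbb{V}_\rho)$ with eigenvalues among those of $\rho(\gamma_\infty)$ --- is true and provable exactly as you indicate: pass to the $d$-fold cyclic cover, where $S^{2n+1}\setminus\{g=0\}$ becomes the product $F_g\times S^1$ and $\pi_1$ splits as $\pi_1(F_g)\times\langle\delta\rangle$ with $\delta=\gamma_\infty^{-1}$ the central Hopf-fiber class; then $t^d$ is the commuting composite of the translation of the trivialized cover (which induces the identity, since its homotopy tracks project to constant paths in the fiber direction) with the $\pi_1(F_g)$-equivariant endomorphism $\rho(\delta)^{-1}$ of the coefficients, whose minimal polynomial therefore annihilates $t^d$ on homology. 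Two small points you should nail down: the sign/orientation identifying $\delta$ with $\gamma_\infty^{-1}$ (so that the eigenvalue condition reads $\lambda^d\in\mathrm{spec}\,\rho(\gamma_\infty)$ rather than its inverse), and the $i=n$ case, where $\TA_n(T_H,\Ft)\to\TA_n(U,\Ft)$ is only an epimorphism --- this still suffices because orders of torsion modules divide under surjections. As for what each approach buys: yours is more elementary (no perverse sheaves) and in fact yields the sharper statement that the minimal polynomial of $t^d$ on $\TA_i(U,\Ft)$, $i\le n$, divides that of $\rho(\gamma_\infty)$; the paper's proof simultaneously produces the vanishing of the local system cohomology $H_i(U,\mathcal{L}_\lambda\otimes\mathbb{V}_\rho)$, which is of independent interest and is the form of the argument that generalizes following Dimca--Libgober.
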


\begin{proof}
Let $\mathcal{L}_{\lambda}$ be the local system on $U$ defined by the map $\pi_1(U) \rightarrow H_1(U) \rightarrow \C^*$ sending $\nu_i$ to $\lambda$ and hence $\gamma_{\infty}$ to $\lambda^{-d}$. By the twisted Milnor long exact sequence of section 4.4, it is enough to show that for $i\neq n+1$, $$H_i(U,\mathcal{L}_{\lambda}\otimes \mathbb{V}_{\rho})=0.$$

Equivalently, we show that for $i\neq n+1$, $H^i(U,\mathcal{L}_{\lambda}\otimes \mathbb{V}_{\rho})=0.$

Let $i:U \hookrightarrow \C^{n+1}$ and $j: \C^{n+1} \hookrightarrow \CP$. Note that $\mathcal{F} = Ri_*(\mathcal{L}_{\lambda}\otimes \mathbb{V}_{\rho}[n+1]) \in \text{Perv}(U).$

Define $\mathcal{G}$ by the distinguished triangle $Rj_!\mathcal{F}\rightarrow Rj_*\mathcal{F}\rightarrow\mathcal{G}.$

By P.214 in \cite{DB2}, this theorem will follow if $\mathcal{G}=0$. So, we aim to show that the local cohomology groups of $Rj_*\mathcal{F}$ vanish at all points in $H$. The local link pair of a point on $H\setminus V$ is $(S^1,\phi)$. The associated chain complex for local cohomology is $$0\rightarrow \C^r \xrightarrow{\lambda^{-d}M-I_r} \C^r \rightarrow 0.$$

By hypothesis, $\lambda^{-d}M-I_r$ has full rank and hence the cohomologies vanish.

For points in $H\cap V$, by transversality, the local link complements are homotopy equivalent to the product of $S^1 $ and some link complements $S\setminus L$ in $V$. Therefore, by the K$\ddot{u}$nneth theorem, the local cohomology is the tensor product of the local cohomology of $S\setminus L$ and that of $S^1$. So they all vanish by the previous argument.

\end{proof}

\begin{bibdiv}
\begin{biblist}

\bib{CS}{article}{
title={Singular spaces, Characteristic classes, and Intersection homology},
author={Cappell, S.},
author={Shaneson, J.},
journal={Annals of Mathematics},
volume={134},
date={1991},
pages={325-374}
}

\bib{TC}{article}{
title={Twisted Alexander Polynomials of Plane Algebraic Curves},
author={J.I. Cogolludo Agustin},
author={V. Florens},
journal={J. Lond. Math. Soc. (2) no. 1},
volume={76},
date={2007},
pages={105-121}
}

\bib{BM}{article}{
title={The boundary manifold of a complex line arrangement},
author={Daniel Cohen},
author={Alexander Suciu},
date={2008},
journal={Geometry \& Topology Monographs},
volume={13},
pages={105-146}
}

\bib{TV}{book}{
title={Toric Varieties},
author={Cox, David A.},
author={Little, John B.},
author={Schenck, Henry K.},
date={2011},
series={Graduate Studies in Mathematics},
publisher={American Mathematical Society}
}

\bib{FB}{book}{
title={Introduction to knot theory},
author={Crowell, Richard},
author={Fox, Ralph},
date={1963},
series={Graduale Texts In Mathematics},
volume={57},
publisher={Ginn}
}

\bib{RF}{article}{
title={Regular Functions Transversal at Infinity},
author={Dimca, Alexandru},
author={Libgober, Anatoly},
journal={Tohoku Math. J.},
volume={58},
date={2006},
pages={549-564}
}

\bib{DB2}{book}{
title={Sheaves in Topology},
author={Dimca, Alexandru},
date={2004},
series={University Text},
publisher={Springer}
}

\bib{DB}{book}{
title={Singularities and Topology of Hypersurfaces},
author={Dimca, Alexandru},
date={1992},
series={University Text},
publisher={SpringerVerlag}
}

\bib{DN}{article}{
title={Hypersurface complements, Alexander modules and monodromy},
author={Dimca, Alexandru},
author={Nemethi, A.},
journal={Contemp.Math.},
volume={354},
date={2004},
publisher={Amer.Math.Soc}
}

\bib{DP}{article}{
title={Hypersurface complements, Milnor fibers and higher homotopy groups of arrangements},
author={Dimca, Alexandru},
author={Papadima, Stefan},
journal={Ann. of Math.},
volume={158},
date={2003},
pages={473--507}
}

\bib{FC}{article}{
title={Free differential calculus II. The isomorphism problem of groups},
author={Fox,R.},
journal={Annals of Mathematics},
volume={59(2)},
date={1954},
pages={196-210}
}

\bib{AH}{book}{
title={Algebraic Topology},
author={Hatcher, Alan},
date={2001},
publisher={Cambridge University Press}
}

\bib{KL}{article}{
title={Twisted Alexander invariants, Reidemeister torsion, and Casson-Gordon invariants},
author={Kirk, P.},
author={Livingston, C.},
journal={Topology no.3},
volume={38},
date={1999},
pages={635-661}
}

\bib{TS}{article}{
title={Twisted Alexander polynomials and surjectivity of a group homomorphism},
author={Kitano, Teruaki},
author={Suzuki,Masaaki},
author={Wada, Masaaki},
journal={Algebraic and Geometric Topology},
volume={5},
date={Oct 6th 2005},
pages={1315-1324}
}

\bib{AC}{article}{
title={Alexander polynomial of plane algebraic curves and cyclic multiple planes},
author={Libgober, Anatoly},
journal={Duke Mathematical Journal},
volume={49(4)},
date={1982},
pages={833-851},
}

\bib{EM}{article}{
title={Eigenvalues for the Monodromy of the Milnor fibers of Arrangements},
author={Libgober, Anatoly},
journal={Trends in Singularities},
date={2002},
pages={141-150},
}

\bib{HG}{article}{
title={Homotopy Groups of the Complements to Singular hypersurfaces II},
author={Libgober, Anatoly},
journal={Annals of Math.},
volume={139},
date={1994},
pages={117-144}
}

\bib{NV}{article}{
title={Non vanishing loci of Hodge numbers of local systems},
author={Libgober, Anatoly},
journal={Manuscripta Mathematica},
volume={128(1)},
date={2008},
pages={1-31}
}

\bib{LT}{article}{
title={Nearby Cycles and Alexander Modules of Hypersurface Complements},
author={Liu, Yongqiang},
journal={arXiv:1405.2343}
}

\bib{LIN}{article}{
title={Representations of knot groups and twisted Alexander polynomial},
author={Lin, X.S},
journal={Acta Math.Sin},
date={2001}
}

\bib{MT}{article}{
title={Intersection Homology and Alexander Modules of Hypersurface complements},
author={Maxim, Laurentiu},
journal={Comment. Math. Helv.},
volume={81 no.1},
date={2005},
pages={123-155}
}

\bib{L2}{article}{
title={L2-Betti numbers of hypersurface complements},
author={Maxim, Laurentiu},
journal={Int. Math. Res. Not.}
volume={2014, No. 17},
pages={4665-4678}
}
\bib{SP}{book}{
title={Singular points of Complex Hypersurfaces},
author={Milnor, John},
date={1968},
series={Annuals of Mathematical Studies 61},
volume={50},
publisher={Princeton Univ. Press},
address={Princeton, NJ}
}

\bib{OKA}{article}{
title={A survey on Alexander polynomials of plane curves},
author={Oka, M.},
journal={S$\acute{e}$minaires Cong$\acute{e}$s},
date={2005},
volume={10},
pages={209-232}
}

\bib{AO}{article}{
title={On the fundamental group of the complement of certain plane curves},
author={Oka, M.},
journal={J. Math.Soc.Japan},
date={1978},
volume={30},
pages={579-597}
}

\bib{RB}{book}{
title={Knots and Links},
author={Rolfsen, Dale},
date={1976},
publisher={AMS Chelsea Pub.}
}

\bib{WD}{article}{
title={Twisted Alexander polynomial for finitely presentable group},
author={Wada, M.},
date={1994},
journal={Topology},
volume={33 no.2},
pages={241-256}
}

\end{biblist}
\end{bibdiv}

\end{document}